\documentclass[12pt]{amsart}
\usepackage[pdftex]{graphicx,color}
\usepackage{amssymb,latexsym,bm,color}
\usepackage[dvips,centering,includehead,width=15.1cm,height=22.7cm]{geometry}
\usepackage{amsmath, amsthm}
\usepackage{epsfig}
\usepackage{amssymb}
\input{xy} \xyoption{all}
\usepackage{color}
\usepackage{enumerate}

\def\lbr{[[}
\def\rbr{]]}

\definecolor{darkgreen}{rgb}{0.0, 0.7, 0.0}


\setcounter{tocdepth}{2} \setcounter{tocdepth}{1}
\numberwithin{equation}{section}

\newtheorem{theorem}{Theorem}[section]

\newtheorem{lemma}[theorem]{Lemma}

\newtheorem{corollary}[theorem]{Corollary}
\newtheorem{proposition}[theorem]{Proposition}

\newtheorem{conjecture}[theorem]{Conjecture}

\theoremstyle{definition}
\newtheorem{definition}[theorem]{Definition}
\newtheorem{example}[theorem]{Example}
\newtheorem{remark}[theorem]{Remark}

\newtheorem{notation}[theorem]{Notation}

\newcommand{\thmref}[1]{Theorem~\ref{#1}}

\newcommand{\propref}[1]{Proposition~\ref{#1}}
\newcommand{\corref}[1]{Corollary~\ref{#1}}

\newcommand{\remref}[1]{Remark~\ref{#1}}
\newcommand{\conjref}[1]{Conjecture~\ref{#1}}

\newcommand{\res}{\mathop{\text{\rm res}}}

\newcommand{\eps}{{\varepsilon}}
\newcommand{\M}{{\mathcal M}}

\newcommand{\C}{{\mathcal C}}

\newcommand{\D}{{\mathcal D}}

\renewcommand{\O}{{\mathcal O}}

\newcommand{\tN}{{\widetilde{N}}}

\newcommand{\wt}{{\mathrm{wt}}}
\newcommand{\len}{{\mathrm{len}}}
\newcommand{\comb}{{\mathrm{comb}}}
\newcommand {\Hilb}{{\mathrm{Hilb}}}
\newcommand {\trop}{{\mathrm{trop}}}
\DeclareMathOperator {\FD}{{\bf FD}}
\DeclareMathOperator {\rk}{rk}
\DeclareMathOperator {\Area}{Area}
\DeclareMathOperator {\mult}{mult}
\DeclareMathOperator {\F}{\Sigma}
\DeclareMathOperator {\conv}{conv}
\DeclareMathOperator {\dive}{div}
\newcommand{\GGamma}{ {\bf \Gamma} }
\newcommand{\kk}{ {\bf k} }

\renewcommand{\aa}{ {\bf a} }

\newcommand{\maxdelta}{ 10 }

\newcommand{\RR}{{\mathbb R}}
\newcommand{\ZZ}{{\mathbb Z}}

\newcommand{\QQ}{{\mathbb Q}}
\newcommand{\PP}{{\mathbb P}}
\newcommand{\CC}{{\mathbb C}}

\newcommand{\oo}{\multiput(0,0)(10,0){2}{\circle{2}}}
\newcommand{\ooo}{\multiput(0,0)(10,0){3}{\circle{2}}}
\newcommand{\oooo}{\multiput(0,0)(10,0){4}{\circle{2}}}
\newcommand{\ooooo}{\multiput(0,0)(10,0){5}{\circle{2}}}
\newcommand{\oooooo}{\multiput(0,0)(10,0){6}{\circle{2}}}

\newcommand{\Eeee}{\put(1,0){\line(1,0){8}}}
\newcommand{\eEee}{\put(11,0){\line(1,0){8}}}
\newcommand{\eeEe}{\put(21,0){\line(1,0){8}}}
\newcommand{\eeeE}{\put(31,0){\line(1,0){8}}}
\newcommand{\eeeeE}{\put(41,0){\line(1,0){8}}}

\newcommand{\eeeBB}{\qbezier(30.2,1)(34,5)(40,5)\qbezier(40,5)(46,5)(49.8,1)}

\newcommand{\eeOe}{\qbezier(20.8,0.6)(25,4)(29.2,0.6)\qbezier(20.8,-0.6)(25,-4)(29.2,-0.6)}

\newcommand{\eeeeO}{\qbezier(40.8,0.6)(45,4)(49.2,0.6)\qbezier(40.8,-0.6)(45,-4)(49.2,-0.6)}


\title{Refined curve counting with tropical geometry}
\author{Florian Block \and Lothar G\"ottsche}
\address{Florian Block, Department of Mathematics, University of
  California, Berkeley, Berkeley, USA}
\email{block@math.berkeley.edu}
\address{Lothar G\"ottsche, International Centre for Theoretical Physics, Strada Costiera 11, 34151 Trieste, Italy}
\email{gottsche@ictp.it}
\thanks {\emph {2010 Mathematics Subject Classification: Primary:
    14N10. Secondary: 14N35, 14T05. 
}}

\keywords {Severi variety, refined Severi degree, G\"ottsche
  conjecture, Welschinger invariant, tropical geometry, floor diagram}
\thanks{The first author was supported by the EPSRC grant EP/I008071/1
and a Feodor Lynen-Fellowship of the Alexander von Humboldt-Foundation.}

\begin{document}

\begin{abstract}
The Severi degree is the degree of the Severi variety
parametrizing
plane curves of degree $d$ with $\delta$ nodes.
Recently, G\"ottsche and Shende gave two
refinements of Severi degrees, polynomials in a variable $y$, which
are conjecturally equal, for large $d$.
At $y = 1$,
one of the refinements, the
relative Severi degree,
specializes to the (non-relative) Severi degree. 

We give a tropical description of the refined
Severi degrees, in terms of a refined tropical curve count for all
toric surfaces. We also refine the equivalent count of floor diagrams
for Hirzebruch and rational ruled surfaces.
 Our description implies that,
for fixed $\delta$, the
refined Severi degrees are polynomials
in $d$ and $y$, for large $d$. As a consequence, we show that,
for  $\delta \le \maxdelta$ and all $d$, both refinements of
G\"ottsche and Shende agree and equal our refined counts of tropical
curves and floor diagrams.
\end{abstract}

\maketitle

\section{Introduction}
\label{sec:intro}

A \emph{$\delta$-nodal curve} is a reduced (not necessarily
irreducible) curve with $\delta$ simple nodes and no other singularities.
The \emph{Severi degree} $N^{d, \delta}$ is the degree of the
Severi variety parametrizing plane $\delta$-nodal curves of degree $d$.
Equivalently, $N^{d, \delta}$ is the number of $\delta$-nodal 
plane curves of degree $d$ through $\tfrac{(d+3)d}{2} - \delta$ generic points in
the complex projective plane $\PP^2$.

Severi degrees are generally difficult to compute. Their
study goes back to the midst of 19th century, when Steiner~\cite{St48},
in 1848,
showed that the degree $N^{d, 1}$ of the discriminant of
$\PP^2$ is $3(d-1)^2$. Only in 1998, Caporaso and
Harris~\cite{CH98} computed $N^{d, 
  \delta}$ for any $d$ and $\delta$, by their celebrated recursion
(involving \emph{relative Severi degrees} $N^{d, \delta}(\alpha,
\beta)$ counting curves satisfying tangency conditions to a fixed line).

Di Francesco and Itzykson~\cite{DI}, in 1994, conjectured the numbers
$N^{d, \delta}$ to be polynomial in $d$, for fixed $\delta$ and
$d$ large enough. In 2009, Fomin-Mikhalkin~\cite{FM} showed that,
for each $\delta \ge 1$, there is a polynomial $N_\delta(d)$ in $d$
with $N^{d, \delta} = N_\delta(d)$, provided that $d \ge 2
\delta$. The polynomials $N_{\delta}(d)$ are called \emph{node polynomials}.

More generally, for $S$ a projective algebraic surface, and $L$ a line bundle on $S$, the Severi degree
$N^{(S,L),\delta}$ is the number of $\delta$-nodal curves in the complete linear system $|L|$ through 
$\dim|L|-\delta$ general points of $S$ .
In  \cite{Go} it was conjectured that the  Severi degrees of arbitrary smooth
projective surfaces $S$ with a sufficiently ample line bundle
$L$ are given by universal polynomials. Specifically the conjecture predicts  for each
fixed $\delta$, the existence of a polynomial  $\widetilde N^{(S,L),\delta}$ in the intersection numbers
$L^2$, $LK_S$, $K_S^2$, $c_2(S)$ such that 
$N^{(S,L),\delta}=\widetilde N^{(S,L),\delta}$ for $L$ sufficiently ample. We call the $\widetilde N^{(S,L),\delta}$
the \emph{curve counting invariants}.
In addition the $\widetilde N^{(S,L),\delta}$ were conjectured to be given by a multiplicative generating function, i.e. 
there are universal power series $A_1,A_2,A_3,A_4\in \QQ[[q]]$, such that 
\begin{equation}
\label{multi}
\sum_{\delta\ge 0} \widetilde N^{(S,L),\delta}q^{\delta}=A_1^{L^2} A_1^{LK_S}A_3^{K_S^2} A_4^{c_2(S)}.
\end{equation}
Furthermore $A_1$ and $A_2$ are given explicitly in terms of modular forms. 
This  conjecture was proved by
Tzeng~\cite{Tz10} in 2010. A second proof was given shortly afterwards
by Kool, Shende, and Thomas~\cite{KST11}. In the latter proof, the
authors identified the numbers $\widetilde N^{(S,L),\delta}$  as coefficients of the
generating function of the topological Euler characteristics of relative Hilbert
schemes (see Section~\ref{sec:refined}). This is motivated by the proposed definition  the Gopakumar Vafa (BPS) 
invariants in terms of Pandharipande-Thomas invariants in \cite{PT10}. Thus the curve counting invariants can be viewed as
special cases of BPS invariants.
By definition for $S=\PP^2$ and $L=\O(d)$, the curve counting invariants coincide with the node polynomials: 
$\widetilde N^{(\PP^2,\O(d)),\delta}=N_\delta(d)$.

Inspired by this description, in \cite{GS12} \emph{refined invariants}   $\widetilde N^{(S,L),\delta}(y)$ are defined as coefficients of a very similar generating function, but with the topological Euler characteristic replaced by the normalized $\chi_{-y}$-genus, a
specialization of the Hodge polynomial.  They are Laurent polynomials in $y$, symmetric under $y\mapsto \frac{1}{y}$.
In \cite{GS12} a number of conjectures are made about the refined invariants $\widetilde N^{(S,L),\delta}(y)$. 
In particular they are conjectured to have a multiplicative generating function (as in \eqref{multi}), where now two of the universal power series are explicitly given in terms of Jacobi forms.
This fact was proven in the meantime in \cite{GS13} in case the canonical divisor $K_S$ is numerically trivial.

In this paper we will concentrate on the case that $S$ is a toric surface, and sometimes we restrict to the case that $S=\PP^2$, $L=\O(d)$, and denote $\widetilde N^{(\PP^2,\O(d)),\delta}(y)=\widetilde N^{d,\delta}(y)$.
In the case that $S$ is a toric surface and $L$ a toric line bundle, we will change slightly the definition of the Severi degrees. 
We denote  $N^{(S,L),\delta}$ the number of 
cogenus $\delta$ curves in $|L|$ passing though
$\dim|L|-\delta$ general points in $S$, which do not contain a 
 toric boundary divisor as a component. This is done because, as we will see  below, with this new definition (and not with the old one) the Severi degrees can be computed via tropical geometry and by a Caporaso-Harris type recursion formula. 
The Severi degrees as defined before we denote by 
 $N_*^{(S,L),\delta}$, but we will not consider them in the sequel.
 
  If $L$ is $\delta$-very ample (see below for the definition) it is
  easy to see (\remref{rem:delta*}) that  $N^{(S,L),\delta}=N^{(S,L),\delta}_*$.
 In case $S=\PP^2$ it is easy to see that $N^{d,\delta}=N^{d,\delta}_*$. 
 By definition the Caporaso-Harris type recursion of \cite{CH98}, \cite{Va00} always computes the invariants 
 $N^{(S,L),\delta}$ for $\PP^2$ and rational ruled surfaces.
 
 If $S$ is $\PP^2$ or a rational ruled surface, in \cite{GS12} \emph{refined Severi degrees} $N^{(S,L),\delta}(y)$ are defined by a modification of the Caporaso-Harris recursion.
 These are again Laurent polynomials in $y$, symmetric under $y\mapsto \frac{1}{y}$.
Again, in the case of $\PP^2$, we denote the refined Severi degrees by $N^{d,\delta}(y)$.
 The recursion specializes to that  \cite{CH98}, \cite{Va00} at $y=1$, so that $N^{(S,L),\delta}(1)=N^{(S,L),\delta}$.

 In this paper we will relate the refined Severi degrees 
 $N^{(S,L),\delta}(y)$ and $N^{d,\delta}(y)$ to tropical geometry.
 Mikhalkin \cite{Mi05} has shown that the Severi degrees of projective toric surfaces can be computed by toric geometry. 
Fix a lattice polygon $\Delta$ in $\RR^2$, i.e. $\Delta$ is the  convex hull of a finite subset of $\ZZ^2$.
Then $\Delta$ determines via its normal fan a projective toric surface $X(\Delta)$ and an ample line bundle $L=L( \Delta)$ on $X(\Delta)$ (and $H^0(X(\Delta),L(\Delta))$ can be identified with the vector space with basis $\Delta\cap \ZZ^2$). 
  Conversely a pair $(X,L)$ of a toric surface and a line bundle on $X$ determines  a lattice polygon. 
We denote by $N^{\Delta,\delta}$ the number of (possibly reducible) cogenus
$\delta$ curves 
of degree $\Delta$ in $(\CC^*)^2$ passing through $|\Delta\cap \ZZ^2|-1-\delta$ general points, as defined in \cite[Def.~5.1]{Mi05}. 
By definition $N^{(X(\Delta),L(\Delta)),\delta}=N^{\Delta,\delta}$. The invariants $N^{\Delta,\delta}$  can be computed in tropical geometry.

 If $X(\Delta)$ is $\PP^2$ or a rational ruled surface, we will in the
 future also write $N^{\Delta,\delta}(y):=N^{(X(\Delta),L(\Delta)),\delta}(y)$
 for the corresponding (refined) Severi degrees as defined in \cite{GS12}. By our definition we then have 
 $N^{\Delta,\delta}(1)=N^{\Delta,\delta}$.

In tropical geometry the Severi degrees $N^{\Delta,\delta}$ can be computed as the count of simple tropical curves  $C$ in $\RR^2$
 through $\dim|L(\Delta)|-\delta$ general points, counted with certain multiplicities $\mult_\CC(C)$.
 Roughly speaking, a simple tropical curve is a trivalent graph $C$ immersed in $\RR^2$, with some extra data.
 From this data, one assigns to each vertex $v$ of $C$ a  multiplicity $\mult_\CC(v)$, and defines the multiplicity $\mult_\CC(C)$ as the product
 $\prod_{v\ \text{vertex of }C} \mult_\CC(v)$.

 For any  integer $n$, and a variable $y$, we introduce the \emph{quantum number} $[n]_y$ by 
 \begin{equation}
 \label{quantum}
 [n]_y=\frac{y^{n/2}-y^{-n/2}}{y^{1/2}-y^{-1/2}}=y^{(n-1)/2}+\cdots +y^{-(n-1)/2}.
 \end{equation}
 By definition $[n]_1=n$.
 We introduce a new polynomial multiplicity $\mult(C; y)\in \ZZ_{\ge 0} [y^{1/2},y^{-1/2}]$ for tropical curves by
 $\mult(C; y)=\prod_{v\ \text{vertex of }C} [\mult_\CC(v)]_y$, and define the \emph{tropical refined Severi degrees} 
 $N^{\Delta,\delta}_{\text{trop}}(y)$ as the count of simple tropical curves  $C$ in $\RR^2$
 through $\dim|L(\Delta)|-\delta$ general points with multiplicity
 $\mult(C; y)$.
 By definition $N^{\Delta,\delta}_{\text{trop}}(y)\in \ZZ_{\ge 0} [y^{1/2},y^{-1/2}]$.
 By definition $[\mult_\CC(v)]_1=\mult_\CC(v)$,
 thus we see that 
 $N^{\Delta,\delta}_{\text{trop}}(1)=N^{\Delta,\delta}$.
 
 A priori, $N^{\Delta,\delta}_{\text{trop}}(y)$  should  depend on a configuration $\Pi$ of $\dim|L(\Delta)|-\delta$ general points in $\RR^2$ but
Itenberg and Mikhalkin show in \cite{IM12} that  $N^{\Delta,\delta}_{\text{trop}}(y)$ is a \emph{tropical invariant}, i.e. independent of 
$\Pi$. 
 
 We will prove that in the case of the plane and rational ruled surfaces, when the refined Severi degrees have been defined in \cite{GS12}, they
 equal the tropical refined Severi degrees.
 
 \begin{theorem}\label{thm:satisfyrecursion}
 Let $X(\Delta)$ be $\PP^2$ or a  rational ruled surface or $\PP1,1,m)$. 
 Then the tropical refined Severi degrees satisfy the recursion \eqref{refrec} for the refined Severi degrees.
 
Thus $N^{\Delta,\delta}_{\trop}(y)=N^{(X(\Delta),L(\Delta),\delta}(y).$
\end{theorem}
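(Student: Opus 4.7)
The plan is to combine the tropical invariance of $N^{\Delta,\delta}_{\trop}(y)$ established by Itenberg--Mikhalkin with the floor-diagram technique. By invariance, I may compute $N^{\Delta,\delta}_{\trop}(y)$ using any generic configuration $\Pi$ of $\dim|L(\Delta)|-\delta$ points, and I would choose $\Pi$ to be very ``horizontally stretched,'' so that consecutive points sit on vertical lines of widely different heights. Under such a choice, every simple tropical curve through $\Pi$ decomposes canonically into horizontal \emph{floors} joined by vertical \emph{elevator} edges, giving a (marked) floor diagram of the kind used in \cite{FM} for $\PP^2$ and in the analogous constructions for Hirzebruch surfaces and $\PP(1,1,m)$.

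Next I would rewrite the refined multiplicity in this setting. For stretched configurations, every trivalent vertex of a tropical curve is attached to some elevator edge, and its complex multiplicity equals the weight $w$ of that elevator. Consequently $\mult(C;y)$ factors, over the elevator edges, as a product of quantum numbers: each bounded elevator contributes $[w]_y^{2}$ and each unbounded elevator contributes $[w]_y$. Summing over markings of a given floor diagram yields a refined floor-diagram count, whose specialization at $y=1$ recovers the classical floor-diagram count of \cite{FM}.

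To identify this count with the refined Severi degree, I would set up a recursion on refined floor diagrams by peeling off the topmost floor. The weights and attachment data of the elevators meeting this floor encode a pair of tangency multiplicities $(\alpha,\beta)$ to the top toric boundary, while the remainder of the diagram is itself a floor diagram for a smaller lattice polygon with prescribed tangency conditions. Grouping refined floor diagrams according to $(\alpha,\beta)$ and tracking the quantum-integer weights contributed by the extracted floor produces a recursion of exactly the shape of \eqref{refrec}. Since \eqref{refrec} together with its initial data determines $N^{(X(\Delta),L(\Delta)),\delta}(y)$ uniquely, equality of the two invariants follows.

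The hard part will be the combinatorial bookkeeping in the last step. One must verify that the multinomial factors weighting how elevators of various weights attach to the new floor match, term by term, the coefficients appearing in the refined Caporaso--Harris recursion of \cite{GS12}, and, crucially, that the quantum refinement sits correctly on both sides (contributing $[w]_y^{2}$ at each fixed tangency and $[w]_y$ at each moving tangency). The identity $[w]_{1}=w$ then specializes everything to the classical tropical proof of the Caporaso--Harris recursion in \cite{FM}, which provides a built-in consistency check at $y=1$.
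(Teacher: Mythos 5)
Your route is genuinely different from the paper's. The paper does not peel floors off floor diagrams; it works directly with tropical curves (Section~\ref{sec:RefRelSevDegs}): it first defines refined \emph{relative} tropical Severi degrees $N^{\Delta,\delta}(\alpha,\beta)(y)$ for tangency to one toric divisor, proves their independence of the point configuration by adapting the Itenberg--Mikhalkin invariance argument \cite{IM12} to the relative setting (Theorem~\ref{thm:refinedRelativeSeveriIndep}), and then derives the recursion \eqref{refrec} by a Gathmann--Markwig-type degeneration (Theorem~\ref{thm:refined=tropical}): all points are placed in a thin vertical strip and one point $p_1$ is pulled far down toward the divisor; either $p_1$ lands on a vertical edge of weight $k$, giving the first sum with its factor $[k]_y$, or the curve splits along a horizontal strip into an upper curve of degree $\Delta'$ and a lower part through $p_1$, giving the second sum with the factor $\prod_i[i]_y^{\beta_i'-\beta_i}$ and the binomial coefficients. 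That argument rests only on Mikhalkin's local structure lemma, so it applies uniformly to $\PP^2$, $\Sigma_m$ (any $m\in\ZZ$) and $\PP(1,1,m)$; floor diagrams enter the paper only for the polynomiality results and the non-relative identification $N_\comb^{\Delta,\delta}(y)=N^{\Delta,\delta}(y)$ (Theorem~\ref{thm:combRefinedEqualRefined}). Your plan is instead a $q$-deformation of the Fomin--Mikhalkin/Brugall\'e--Mikhalkin floor-diagram proof of the Caporaso--Harris recursion; its advantage is that invariance of the relative counts comes for free, since they are purely combinatorial, so no relative analogue of \cite{IM12} is needed.

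The concrete gaps in your plan are the following. First, you need marked floor diagrams \emph{with tangency conditions} $(\alpha,\beta)$, together with a refined-weight-preserving correspondence to tropical curves, for $\Sigma_m$ and $\PP(1,1,m)$ and not just for $\PP^2$; the available references (\cite{FM}, \cite{FB10}) treat only $\PP^2$, and the paper deliberately restricts its relative floor-diagram discussion to $\PP^2$ because the general case is more technical --- this is real work your plan takes for granted. Second, peeling a floor produces only the second sum of \eqref{refrec}; the first sum $\sum_{k:\beta_k>0}[k]_y\,N^{\Delta,\delta}(\alpha+e_k,\beta-e_k)(y)$ comes from the configurations in which the extremal point lies on an elevator reaching the divisor (a moving tangency becoming fixed), not on a floor, and your sketch does not account for that case. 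Third, your final parenthetical has the weights wrong: in the normalized relative multiplicity a fixed tangency of weight $w$ contributes $1$ (its $[w]_y$ is divided out by $\prod_i[i]_y^{\alpha_i}$), a moving tangency contributes $[w]_y$, and bounded elevators contribute $[w]_y^2$; the factor $[k]_y$ in the first sum arises exactly from this normalization discrepancy. None of this is unfixable, but items one and two are precisely where your route costs more than the paper's degeneration argument.
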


We also determine a Caporaso-Harris type recursion formula for $X(\Delta)$ the weighted projective space $\PP(1,1,m)$
(cf. \thmref{thm:refined=tropical}).

The computation of the Severi degrees via tropical geometry and the proof of the existence of node polynomials 
$N_\delta(d)$
 uses a class of decorated graphs called floor diagrams.
The new refined multiplicity $\mult(C; y)$ on tropical curves gives rise to a $y$-statistics  on floor diagrams, which allows to adapt the arguments to the refined tropical Severi degrees. This  statistic is a $q$-analog
of the one of Brugall\'e and Mikhalkin~\cite{BM2} who gave a
combinatorial formula for the Severi degrees $N^{d,
  \delta}$. Theorem~\ref{thm:satisfyrecursion} is a $q$-analog of their
\cite[Theorem~3.6]{BM2} for the refined Severi degrees $N^{d,
  \delta}(y)$.

Using our
combinatorial description, we show that the refined Severi degrees
become polynomials for sufficiently large degree.
\begin{theorem}
\label{thm:refinednodepoly}
For fixed $\delta \ge 1$, there is a
polynomial $N_\delta(d; y) \in \QQ[y, y^{-1}, d]$ of degree
$2\delta$ in $d$ and $\delta$ in $y$ and $y^{-1}$,
such that
\[N_\delta(d; y) = N^{d, \delta}(y),
\]
provided that $d \ge \delta$.
\end{theorem}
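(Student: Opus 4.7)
The plan is to combine \thmref{thm:satisfyrecursion} with a $y$-refined version of the floor-diagram polynomiality argument of Fomin--Mikhalkin~\cite{FM}. Applied to the standard degree-$d$ triangle $\Delta_d$, \thmref{thm:satisfyrecursion} gives $N^{d,\delta}(y)=N^{\Delta_d,\delta}_{\trop}(y)$, so it suffices to establish polynomiality for the tropical invariant. Via the floor-diagram correspondence (the $q$-analog of Brugall\'e--Mikhalkin mentioned in the introduction), this takes the form
\[
N^{d,\delta}(y) \;=\; \sum_{D} \mu(D;y)\,\nu(D,d),
\]
where $D$ ranges over isomorphism classes of floor diagrams of degree $d$ and cogenus $\delta$, $\mu(D;y)=\prod_v [\mult_\CC(v)]_y$ is the refined multiplicity, and $\nu(D,d)$ counts markings of $D$.

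I would next adapt the template decomposition of \cite{FM} to this setting. A cogenus-$\delta$ floor diagram of degree $d$ decomposes into a linear chain of $d$ floors, almost all of which are ``trivial'' (single weight-one edges with no extra structure), interspersed with a bounded number of ``essential'' templates $T_1,\dots,T_k$ of total cogenus $\delta$. Since $[1]_y=1$, the refined multiplicity $\mu(D;y)$ factors as $\prod_i \mu(T_i;y)$ and depends only on the unordered multiset of templates, not on $d$. The marking count $\nu(D,d)$ is a product of binomial-type factors recording the positions of the templates within the chain and the attachments of marking points to the free floors; once $d$ is large enough that every cogenus-$\delta$ template collection fits inside the chain, each such count becomes polynomial in $d$, and summing the contributions over all (finitely many) templates of total cogenus $\delta$ yields a polynomial $N_\delta(d;y)$ of $d$-degree $2\delta$. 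The sharp threshold $d\ge\delta$ (versus the classical $d\ge 2\delta$) comes from the observation that a cogenus-$\delta$ template occupies at most $\delta$ consecutive floors.

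For the $y$-degree: each $[n]_y$ has top $y$-degree $(n-1)/2$ and is invariant under $y\mapsto y^{-1}$. By the standard identity $\sum_v (\mult_\CC(v)-1)=2\delta$, valid for any simple tropical curve of cogenus $\delta$, the product $\mu(D;y)$ has $y$-degree at most $\delta$ and $y^{-1}$-degree at most $\delta$. Since $2\delta$ is even, the number of vertices with even $\mult_\CC(v)$ is even, so the half-integer $y$-powers cancel and $\mu(D;y)\in\ZZ[y,y^{-1}]$; combining this with the $d$-degree bound produces $N_\delta(d;y)\in\QQ[y,y^{-1},d]$ of the advertised degrees. The principal technical difficulty is the template decomposition itself: one must check that cogenus-$\delta$ floor diagrams admit a clean decomposition in which the contributions to $\mu$ and $\nu$ separate, and then verify the sharp length bound that a cogenus-$\delta$ template occupies at most $\delta$ floors. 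This refines the original Fomin--Mikhalkin analysis and mirrors the sharper thresholds obtained in the unrefined case.
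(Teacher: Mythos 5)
Your overall route is the paper's: identify $N^{d,\delta}(y)$ with the tropical count (\thmref{thm:satisfyrecursion}), pass to floor diagrams through the weight-preserving Brugall\'e--Mikhalkin bijection (\thmref{thm:combRefinedEqualRefined}), and decompose into templates as in \cite{FM}. The genuine gap is your justification of the threshold $d\ge\delta$. For $\PP^2$ the admissible template positions are not cut out only by the ``fitting'' inequalities $k_i+\ell(\Gamma_i)\le k_{i+1}$ and $k_s+\ell(\Gamma_s)\le d+\eps_1(\Gamma_s)$; the divergence condition also forces $k_i\ge k_{\min}(\Gamma_i)=\max_j(\varkappa_j(\Gamma_i)-j+1)$, and $k_{\min}$ can be as large as $2\delta(\Gamma_i)$ (for instance the template consisting of $\delta$ weight-$2$ edges between two consecutive vertices has $k_{\min}=2\delta$). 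Consequently the discrete integral $\sum_{\kk}\prod_i P_{\Gamma_i}(k_i)$ over this region is only piecewise polynomial in $d$, and the observation that a cogenus-$\delta$ template collection occupies at most $\delta$ floors gives polynomiality only after all chambers have stabilized, i.e.\ for $d\ge 2\delta$ --- exactly the original Fomin--Mikhalkin bound. Lowering the threshold to $d\ge\delta$ is the hard step: the paper does it by importing the reorganized count of \cite{FB} (the extra statistic $s(\Gamma)$ and the marking lemmas there, which carry over verbatim because they involve only markings, not multiplicities), with $\mult(\Gamma_i)$ replaced by $\mult(\Gamma_i;y)$. Your proposal offers no substitute for this, and the reason you give for $d\ge\delta$ (template length at most $\delta$) addresses the wrong constraint.

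A secondary flaw: the ``standard identity'' $\sum_v(\mult_\CC(v)-1)=2\delta$ is false; only the inequality $\le 2\delta$ holds (a simple curve all of whose $\delta$ nodes are transverse crossings of edges has every vertex multiplicity $1$), so neither your degree statement nor the parity argument for integrality of the powers of $y$ follows as written. Both conclusions are true and are immediate in the floor-diagram formulation the paper actually uses: $\mult(\D;y)=\prod_e([\wt(e)]_y)^2$ visibly lies in $\ZZ_{\ge0}[y^{\pm1}]$, and its $y$-degree is $\sum_e(\wt(e)-1)\le\delta(\D)$ because each edge contributes $(j-i)\wt(e)-1\ge\wt(e)-1$ to the cogenus; the value $\delta$ is attained (cf.\ \propref{prop:leadingCoefficients}), while the $d$-degree bound $2\delta$ comes from $\deg_{k_i}P_{\Gamma_i}\le\delta(\Gamma_i)$ plus one for each of the at most $\delta$ summation variables, as in the paper's proof of \thmref{thm:refinedSeveridegreepoly}.
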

We call the $N_\delta(d; y)$ \emph{refined node polynomials}.

The refined invariants $\widetilde N^{(S,L),\delta}(y)$ were computed in \cite{GS12} for $\delta\le 10$, and there it was
conjectured that the refined Severi degrees $N^{d,\delta}(y)$ agree with the refined invariants $\widetilde N^{d,\delta}(y)$
for $d\ge \frac{\delta}{2}+1$. If we assume this conjecture, it would follow from \thmref{thm:refinednodepoly} that $\widetilde N^{d,\delta}(y)=N_\delta(d;y)$, in particular conjecturally the bound on $d$ can be considerably improved.
We  use the refined Caporaso-Harris recursion formula to compute $N^{d, \delta}(y)$ for $\delta\le 10$ and $d\le 30$. 
Together with \thmref{thm:refinednodepoly} this gives the following.
\begin{corollary} 
\label{cor:wheretheyagree}
For $\delta \le 10$ and any $d \ge \tfrac{\delta}{2} + 1$, we have $\widetilde N^{d,
  \delta}(y) = N^{d, \delta}(y)=N_\delta(d,y)$.
\end{corollary}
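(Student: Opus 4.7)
The plan is to convert the assertion into a finite verification by combining Theorem~\ref{thm:refinednodepoly}, the explicit tables for $\widetilde N^{d,\delta}(y)$ from~\cite{GS12} ($\delta\le 10$), and a direct computation of $N^{d,\delta}(y)$ via the refined Caporaso--Harris recursion.

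First I would pin down $N_\delta(d;y)$ completely. By Theorem~\ref{thm:refinednodepoly} the polynomial $N_\delta(d;y)$ has degree $2\delta\le 20$ in $d$ for $\delta\le 10$, so it is determined by its values at any $2\delta+1\le 21$ distinct integers $d\ge \delta$. Running the refined Caporaso--Harris recursion through the range $\delta\le d\le 30$ yields $31-\delta\ge 21$ values of $N^{d,\delta}(y)$, each equal to $N_\delta(d;y)$ by Theorem~\ref{thm:refinednodepoly}; Lagrange interpolation in $d$ (with coefficients in $\ZZ[y^{1/2},y^{-1/2}]$) then reconstructs the polynomial $N_\delta(d;y)$ explicitly for each $\delta\le 10$.

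Second, I would compare this reconstructed $N_\delta(d;y)$ with $\widetilde N^{d,\delta}(y)$. For each $\delta\le 10$, \cite{GS12} presents the refined invariant $\widetilde N^{(S,L),\delta}(y)$ as a universal expression in the Chern numbers of $(S,L)$; specializing to $S=\PP^2$, $L=\O(d)$ produces an explicit polynomial in $d$ with coefficients in $\ZZ[y^{1/2},y^{-1/2}]$. A coefficient-by-coefficient matching, performed for each $\delta\le 10$ separately, establishes the identity $\widetilde N^{d,\delta}(y)=N_\delta(d;y)$ as polynomials in $d$. Combined with Theorem~\ref{thm:refinednodepoly}, this already handles all $d\ge \delta$.

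It remains to treat the short residual range $\lceil \delta/2\rceil + 1 \le d<\delta$, where Theorem~\ref{thm:refinednodepoly} is silent. Here I would tabulate $N^{d,\delta}(y)$ directly from the recursion and $\widetilde N^{d,\delta}(y)$ from the specialized \cite{GS12} formula, and compare both with $N_\delta(d;y)$ evaluated at those integers; this is at most a handful of pairs $(d,\delta)$ and is entirely finite. The principal obstacle is practical rather than conceptual: the refined Caporaso--Harris recursion branches over partitions and tangency profiles, and the number of terms grows quickly with $\delta$, so one has to organize the bookkeeping carefully---for example, pruning the expansion using the bound on the $y$-degree supplied by Theorem~\ref{thm:refinednodepoly}---to keep the computation tractable within $\delta\le 10$. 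Once the finite check succeeds, the three quantities coincide throughout $d\ge \tfrac{\delta}{2}+1$ and the corollary follows.
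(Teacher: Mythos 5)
Your proposal is correct and follows essentially the same route as the paper: the paper (in the proof of Theorem~\ref{thm:refinedInvEqualsRefinedSeveri}(1)) also combines Theorem~\ref{thm:refinednodepoly}, the explicit $\delta\le 10$ formulas for $\widetilde N^{d,\delta}(y)$ from \cite{GS12} (a polynomial of degree $2\delta$ in $d$), and the refined Caporaso--Harris computation of $N^{d,\delta}(y)$ for $\delta\le 10$, $d\le 30$, concluding that two degree-$2\delta$ polynomials agreeing at the $31-\delta\ge 2\delta+1$ values $\delta\le d\le 30$ coincide, and covering the range $\tfrac{\delta}{2}+1\le d<\delta$ by the same direct computation. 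Your Lagrange interpolation plus coefficient-matching is just a repackaging of the paper's ``equal at enough points, hence equal as polynomials'' step, so the two arguments are the same in substance.
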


\begin{corollary} 
For $\delta \le 10$ and any $d \ge \tfrac{\delta}{2} + 1$, $\widetilde N^{d,
  \delta}(y)$, as a Laurent polynomial in $y$, has non-negative integral coefficients.
\end{corollary}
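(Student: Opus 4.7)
The plan is to chain together the results already established in the paper. By Corollary~\ref{cor:wheretheyagree}, under the hypotheses $\delta \le 10$ and $d \ge \tfrac{\delta}{2}+1$ we have the equality $\widetilde N^{d,\delta}(y) = N^{d,\delta}(y)$, so it suffices to prove non-negativity for $N^{d,\delta}(y)$.

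Next, I would invoke Theorem~\ref{thm:satisfyrecursion}, which identifies the refined Severi degree with the tropical refined Severi degree: for the lattice triangle $\Delta = \conv\{(0,0),(d,0),(0,d)\}$ (so that $(X(\Delta), L(\Delta)) = (\PP^2, \O(d))$), we get
\[
N^{d,\delta}(y) \; = \; N^{\Delta,\delta}_{\trop}(y) \; = \; \sum_{C} \mult(C;y) \; = \; \sum_{C} \prod_{v \text{ vertex of } C} [\mult_\CC(v)]_y,
\]
where the sum ranges over simple tropical curves $C$ of degree $\Delta$ passing through a generic configuration of $\dim|L(\Delta)| - \delta$ points. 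By the Itenberg--Mikhalkin invariance cited in the introduction, the right-hand side is independent of the chosen configuration.

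The final step is the key observation that each quantum number
\[
[n]_y \; = \; y^{(n-1)/2} + y^{(n-3)/2} + \cdots + y^{-(n-1)/2}
\]
is, for $n \ge 1$, a polynomial in $y^{1/2}, y^{-1/2}$ with non-negative integer coefficients. Consequently every product $\prod_v [\mult_\CC(v)]_y$ lies in $\ZZ_{\ge 0}[y^{1/2}, y^{-1/2}]$, and the sum over tropical curves $C$ also lies in $\ZZ_{\ge 0}[y^{1/2}, y^{-1/2}]$. Since $\widetilde N^{d,\delta}(y)$ is already known to be a Laurent polynomial in $y$ with integer (not half-integer) powers, only the integer-power coefficients survive, and non-negativity is preserved under this identification. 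This yields the corollary.

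There is no substantive obstacle: the non-negativity is essentially built into the definition of $\mult(C;y)$ via the quantum numbers, and the only non-trivial inputs are Theorem~\ref{thm:satisfyrecursion} (matching the refined Severi degree to a sum of products of $[n]_y$) together with Corollary~\ref{cor:wheretheyagree} (promoting the statement from $N^{d,\delta}(y)$ to $\widetilde N^{d,\delta}(y)$). If anything deserves care, it is the bookkeeping that non-negativity in $\ZZ_{\ge 0}[y^{1/2}, y^{-1/2}]$ transfers correctly to non-negativity in $\ZZ[y, y^{-1}]$, but this is immediate since the surviving monomials are a subset of the original ones.
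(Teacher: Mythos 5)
Your proposal is correct and follows essentially the same route the paper takes: equate $\widetilde N^{d,\delta}(y)$ with $N^{d,\delta}(y)$ via Corollary~\ref{cor:wheretheyagree}, identify the latter with the tropical count (Theorem~\ref{thm:satisfyrecursion}), and observe that the refined multiplicities are sums of products of quantum numbers $[n]_y$, hence have non-negative integer coefficients. Your extra remark on passing from $\ZZ_{\ge 0}[y^{1/2},y^{-1/2}]$ to integer powers of $y$ is harmless bookkeeping and does not change the argument.
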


Our combinatorial description of the Laurent polynomials $N^{d,
  \delta}(y)$ allows for effective computation of the refined node
polynomials; for details see Remark~\ref{rem:smallrefinedpolys}.
For $\delta \le 3$, the polynomials $N_\delta(d; y)$ are explicitly given by
Remark~\ref{rem:smallrefinedpolys}.
For $\delta\le 10$ they are given by \thmref{thm:refinedInvEqualsRefinedSeveri}
(proving the formula of \conjref{Gconj} for $\delta\le 10$).

G\"ottsche and Shende also observed a connection between refined
invariants and \emph{real} algebraic geometry. Specifically, they
conjectured that $\widetilde N^{d, \delta}(-1)$ equals  the
\emph{tropical Welschinger invariant} $W_\trop^{d, \delta}$ (for the
definition and details see~\cite{IKS09}), for $d \ge \tfrac{\delta}{3}
+ 1$. Furthermore, by definition $N^{d,
  \delta}(-1)=  W_\trop^{d, \delta}$, i.e.  the refined Severi degree specializes, at $y = -1$ and for all $d$, to the tropical Welschinger invariant.
The numbers $W_\trop^{d, \delta}$, in
turn, equal counts of real plane curves (i.e., complex plane curves
invariant under complex conjugation), counted
with a sign, through particular configurations of real
points~\cite[Proposition~6.1]{Sh05}. Indeed, at $y = -1$, the new
$y$-statistic on floor diagrams specializes to the ``real
multiplicity'' of Brugall\'e and Mikhalkin~\cite{BM2}, and
Theorem~\ref{thm:satisfyrecursion}
becomes \cite[Theorem~3.9]{BM2} for the numbers $N^{d,
  \delta}(-1) = W_\trop^{d, \delta}$.

The recursion formula \ref{refrec} simplifies considerably if we specialize $y=-1$. Therefore we have been able to  use the recursion to compute $N^{d,\delta}(-1)$ for $\delta\le 15$ and $d\le 45$. As by \thmref{thm:refinednodepoly} $N_\delta(d,-1)$ is a polynomial in $d$ of degree at most $2\delta$, this determines $N_\delta(d,-1)$ for $d\le 15$.  On the other hand in \cite{GS12} the $\widetilde N^{(S,L),\delta}(-1)$ are computed  for all $S$, $L$ and 
$\delta\le 14$. 

\begin{corollary}
$\widetilde N^{d,\delta}(-1)=N_\delta(d,-1)= W_\trop^{d, \delta}$  for  $\delta\le 14$ and all $d \ge \tfrac{\delta}{3}+1$.
\end{corollary}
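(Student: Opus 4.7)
The plan is to combine three ingredients: polynomiality of $N^{d,\delta}(y)$ in $d$ from \thmref{thm:refinednodepoly}, the collapse of the refined Caporaso--Harris recursion \ref{refrec} at $y=-1$, and the independent computations of $\widetilde N^{(S,L),\delta}(-1)$ in \cite{GS12}.

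First I would specialize the recursion \ref{refrec} to $y=-1$. Since each quantum number satisfies $[n]_{-1}\in\{0,\pm 1\}$ depending on the parity of $n$, the vast majority of branches in the recursion either vanish or contribute trivially, which is precisely what makes computation feasible for $\delta\le 15$ and $d\le 45$. I would carry out this computation, producing a table of values of $N^{d,\delta}(-1)$.

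Next, by \thmref{thm:refinednodepoly}, for each fixed $\delta$ the refined node polynomial $N_\delta(d;-1)$ is a polynomial in $d$ of degree at most $2\delta$, and it agrees with $N^{d,\delta}(-1)$ for $d\ge\delta$. For $\delta\le 14$ we have $2\delta\le 28$, and the set $\{\delta,\delta+1,\ldots,45\}$ contains at least $32$ values, substantially more than needed to interpolate $N_\delta(d;-1)$ from the tabulated values. This yields an explicit formula for $N_\delta(d;-1)$.

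Now I would invoke two further facts. By definition, as noted in the paragraph preceding the corollary, $N^{d,\delta}(-1)=W_{\trop}^{d,\delta}$ for all $d$ in the range where both sides are defined; combining with the polynomial identity above gives $N_\delta(d;-1)=W_{\trop}^{d,\delta}$ for $d\ge\delta$. The computations in \cite{GS12} supply $\widetilde N^{d,\delta}(-1)$ for all $d$ and $\delta\le 14$. It remains to compare these with the interpolated polynomial $N_\delta(d;-1)$ in the full claimed range $d\ge \delta/3+1$, and to compare with known values of $W_{\trop}^{d,\delta}$ for the sub-range $\delta/3+1\le d<\delta$.

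The main obstacle is not conceptual but combinatorial: one must actually execute the simplified recursion for all $(d,\delta)$ with $\delta\le 15$, $d\le 45$, and then verify a three-way numerical agreement. The nontrivial content is that the polynomial $N_\delta(d;-1)$, obtained by interpolating values at $d\ge\delta$, continues to agree with $\widetilde N^{d,\delta}(-1)$ (and hence with $W_{\trop}^{d,\delta}$) when evaluated at the smaller values $\delta/3+1\le d<\delta$ --- this is exactly the numerical content of the Welschinger conjecture of G\"ottsche and Shende in the cases accessible to computation.
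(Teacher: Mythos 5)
Your proposal is correct and is essentially the paper's own argument: the paper likewise computes $N^{d,\delta}(-1)$ via the simplified recursion for $\delta\le 15$, $d\le 45$, uses Theorem~\ref{thm:refinednodepoly} to pin down the polynomial $N_\delta(d;-1)$ of degree $\le 2\delta$ from these values, and compares with the $\widetilde N^{(S,L),\delta}(-1)$ computed in \cite{GS12} for $\delta\le 14$, with the identity $N^{d,\delta}(-1)=W_\trop^{d,\delta}$ handling the Welschinger side and the direct numerical check covering $\tfrac{\delta}{3}+1\le d<\delta$.
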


We expect our methods to compute refined Severi degrees also for other
toric surfaces. Specifically, we expect the argument to generalize to
toric surfaces of ``$h$-transverse'' polygons, along the lines
of~\cite{AB10} (see Remark~\ref{rmk:hTransverse}). Notice that such
surfaces are in general not smooth and are thus outside the realm of
the (non-refined) G\"ottsche conjecture~\cite{Go}.

One may speculate about the meaning of refined Severi degrees at other
roots of unity. At $y = -1$, we obtain a (signed) count of complex curves
invariant under the involution of complex conjugation, at least in
genus $0$. This shows the occurrence of a \emph{cyclic sieving
  phenomenon} \cite{Sa11} of order $2$. At least for $y = i$, the
imaginary unit, the refined Severi degree again specializes to an integer
$N^{\Delta, \delta}(i) \in \ZZ$. It would be interesting to find a non-tropical
enumerative interpretation for these numbers. 

This paper is organized as follows.
In Section~\ref{sec:refined}, we review, following G\"ottsche and
Shende, the refined invariants and refined Severi degrees, the latter
for the surfaces $\PP^2$, $\Sigma_m$, and $\PP(1,1,m)$. In
Section~\ref{sec:tropicalRefinedCounting}, we introduce a refinement
of tropical curve enumeration for toric surfaces and extend the notion
of refined Severi degrees to this class. In
Section~\ref{sec:properties} we discuss various polynomiality and other
properties of the refined Severi degrees. In Section~\ref{sec:FD}, we
refine the floor diagram technique of Brugall\'e and Mikhalkin and
template decomposition of Fomin and Mikhalkin, and use it in
Section~\ref{sec:refinednodepolys} to prove the results stated in
Section~\ref{sec:properties}. Finally, in
Section~\ref{sec:RefRelSevDegs}, we introduce tropical refined relative Severi
degrees and show that they agree with the refined Severi degrees of
the G\"ottsche and Shende.

\smallskip

{\bf Acknowledgements.}
The first author thanks Ilia Itenberg, Martin
Kool, and Damiano Testa for helpful discussions, and Diane Maclagan
for telling him about this problem. The second author thanks Sam Payne and Vivek Shende for very useful discussions.

\section{Refined invariants and refined Severi Degrees}
\label{sec:refined}

In this section  we review the
 definition of the closely related notions of the refined
invariants and the refined Severi
degrees from~\cite{GS12}. 
In Section~\ref{sec:tropicalRefinedCounting} we will show that the refined Severi degree also has a simple combinatorial 
interpretation in terms of tropical geometry.

Recall that the Severi degree $N^{d, \delta}$ is the
degree of the Severi variety parametrizing $\delta$-nodal 
plane curves of degree $d$ in $\PP^2$. Equivalently, $N^{d, \delta}$ is the number of such
curves through $\tfrac{(d+3)d}{2} - \delta$ generic points in $\PP^2$.
More generally given a line bundle $L$ on a surface $S$, one can define the Severi degree
$N^{(S,L),\delta}$ as the number of $\delta$-nodal reduced curves in the complete linear system $|L|=\PP(H^0(S,L))$
passing through $\dim|L|-\delta$ general points. 

\subsection{Refined invariants}

For a line bundle $L$ on $S$ we denote by $g(L):=\frac{L(L+K_S)}{2}+1$ the arithmetic genus of a curve in $|L|$.
 For $\delta \ge 0$, let $\PP^\delta$
be a general $\delta$-dimensional subspace of $|L|$. Let
$\C \to \PP^\delta$ be the \emph{universal curve}, i.e., $\C$ is the subscheme
\[
\C = \{ (p, [C]) \, : \, p \in C \} \subset S \times \PP^\delta
\]
with a natural map to $\PP^\delta$. Here, $[C]$ denotes the curve $C$ viewed as a point of $\PP^\delta$. Thus the fiber of $\C \to \PP^\delta$ over $[C] \in\PP^\delta$ is the
curve $C$. Let $S^{[n]}=\Hilb^n(S)$ be the Hilbert scheme of $n$ points in
$S$. Finally,
let $\Hilb^n(\C / \PP^\delta)$ be the relative Hilbert
scheme
\[
\Hilb^n(\C / \PP^\delta) = \{ ([Z], [C]) \, : \, Z \subset C \}
\subset S^{[n]} \times \PP^\delta.
\]
 Here, $[Z]$ is the the subscheme $Z$ viewed as a point of $S^{[n]}$ and $Z\subset C$ means that $Z$ is a subscheme of $C$.

Recall that a line bundle $L$ on $S$ is  called $\delta$-very ample, if the restriction map
$H^0(S,L)\to H^0(L|_Z)$ is surjective for all zero dimensional subschemes $Z\in S^{[\delta+1]}$.
In the introduction we had changed the definition of the Severi degrees for toric surfaces, defining $N^{(S,L),\delta}$ to be the count of $\delta$-nodal curves in $|L|$ through generic points, which do not contain a toric boundary divisor. The count of curves without this condition we denoted $N^{(S,L),\delta}_*$.
\begin{remark}\label{rem:delta*}
Let $L$ be $\delta$-very ample on a  surface $S$, then the curves in $|L|$ containing a given curve as a component occur in codimension at least $\delta+1$.
In particular if $L$ is a $\delta$-very ample toric line bundle on the
toric surface $S$, then $N^{(S,L),\delta}=N^{(S,L),\delta}_*$.
\end{remark}
\begin{proof}
  Let $C$ be be a curve on $S$. Let $Z$ be any $0$-dimensional subscheme of $C$ of length $\delta+1$. 
 Then by $\delta$-very ampleness the canonical restriction map $\rho:H^0(S,L)\to H^0(L|_Z)$ is surjective. The sections  $s$ of $L$ such that $Z(s)$ contains $C$ as a component
 lie in the kernel of $\rho$, thus curves having $C$ as a component occur in codimension at least $\delta+1$ in $|L|$.
  \end{proof}

We review the definition of the refined invariants $\widetilde N^{(S,L),\delta}(y)$ in case $\Hilb^n(\C / \PP^\delta)$ is nonsingular of dimension $n+\delta$ for all $n$. A sufficient condition for this is that
$L$ is $\delta$-very ample,
see \cite[Thm.~41]{GS12}.

In their proof~\cite{KST11} of the G\"ottsche
conjecture~\cite[Conjecture~2.1]{Go}, Kool, Shende, and Thomas showed,
partially based on~\cite{PT10},that,  if $L$ is $\delta$-very ample, 
 the Severi degrees $N^{(S,L), \delta}$ can be computed from the generating function of their Euler characteristics.
Specifically, they show~\cite[Theorem~3.4]{KST11} that, under this assumption, there exist integers $n_r$, for $r = 0,\ldots, \delta$, such that
\begin{equation}
\label{eqn:EulerFormula}
\sum_{i = 0}^\infty e(\Hilb^i(\C / \PP^\delta)) t^i = \sum_{r = 0}^
  \delta n_r \,  t^{r} \, (1-t)^{2g(L)-2-2r}.
\end{equation}
Here,  $e(-) = \sum_{i \ge 0} (-1)^i \rk H^i(-,\ZZ)$ denotes the topological
Euler characteristic.
Furthermore, they showed that the Severi degree $N^{(S,L),
  \delta}$ equals the coefficient $n_{\delta}$ in
(\ref{eqn:EulerFormula}).

Inspired by this description,  G\"ottsche and
Shende~\cite{GS12} suggest to replace in (\ref{eqn:EulerFormula}) the
Euler characteristic $e(-)$ by the  $\chi_{-y}$-genus
\begin{equation}
\label{eqn:Hodgepoly}
\chi_{-y}(-) = \sum_{p, q \ge 0} (-1)^{p+q} \, y^q \, h^{p,q}(-),
\end{equation}
where $h^{p, q}(-)$ are the Hodge numbers.
The polynomial $\chi_{-y}$ is the \emph{Hodge polynomial} $H(\tilde{x},\tilde{y})(-) =
\sum_{p,q \ge 0} \tilde{x}^p \, \tilde{y}^q \, h^{p,q}(-)$,
at $\tilde{x} = -y$ and $\tilde{y} = -1$. 
They prove the following:
\begin{proposition} \label{propchivan}

 Assume $\Hilb^n(\C / \PP^\delta)$ is nonsingular for all $n$. 
Then there exist polynomials $n_0(y), \ldots, n_{g(L)}(y)$ such that 
\begin{equation}
\label{eqn:HodgeFormula}
\sum_{i = 0}^\infty \chi_{-y}(\Hilb^n(\C / \PP^\delta)) t^n = \sum_{r = 0}^
  {g(L)} n_r(y) \,  t^{r} \, \big((1-t)(1-ty)\big)^{g(L)-r-1}.
\end{equation}
\end{proposition}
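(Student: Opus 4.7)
The plan is to mirror the Euler-characteristic argument of Kool--Shende--Thomas (producing equation \eqref{eqn:EulerFormula}), systematically upgrading the invariant $e(-)$ to the Hodge-genus $\chi_{-y}(-)$. Under the smoothness hypothesis on $\Hilb^n(\C/\PP^\delta)$, the $\chi_{-y}$-genus is well-defined and enjoys the two formal properties needed: additivity over locally closed stratifications (which one sees at the level of the Grothendieck ring of varieties equipped with the weight filtration) and multiplicativity over Zariski-locally trivial smooth fibrations. These properties allow one to break the computation into pieces indexed by how singular the fiber is.

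I would stratify $\PP^\delta$ by the locus $V_r$ of curves with exactly $r$ nodes ($0\le r\le\delta$) and apply additivity:
\[
\chi_{-y}(\Hilb^n(\C/\PP^\delta)) = \sum_{r=0}^{\delta} \chi_{-y}(\Hilb^n(\C_{V_r}/V_r)).
\]
Over a smooth curve $C$ of genus $g$, $\Hilb^n(C)$ is the symmetric product $C^{(n)}$, and Macdonald's formula specialises to
\[
\sum_{n\ge 0}\chi_{-y}(C^{(n)}) t^n = ((1-t)(1-ty))^{g-1}.
\]
For a reduced curve with $r$ nodes of arithmetic genus $g$, a Hodge-theoretic refinement of the Pandharipande--Thomas/BPS calculation of \cite{PT10} should give
\[
\sum_{n\ge 0}\chi_{-y}(\Hilb^n(C)) t^n = \sum_{s=0}^{r} a_s(C;y)\, t^s\, ((1-t)(1-ty))^{g-s-1},
\]
with the $a_s(C;y)$ encoding the local contribution at each node. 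Integrating this fibrewise identity over $V_r$ via the fibration property, and summing over $r$, I obtain the stated decomposition after collecting terms by the exponent of $(1-t)(1-ty)$; the polynomials $n_r(y)$ emerge as the coefficients.

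The principal obstacle is establishing the Hodge-refined Macdonald/PT identity for nodal fibres and, more subtly, verifying that the mixed Hodge structures on the cohomology of the relative Hilbert scheme behave compatibly with the node stratification. The smoothness hypothesis is precisely what makes this tractable: on each smooth stratum, the weight filtration is pure and the generating function is a rational function with poles only at $t=1$ and $t=1/y$, whose total pole order is bounded by $g(L)-1$. Once this rationality and pole bound are in hand, the decomposition in the stated form is formal, since the functions $\{t^r((1-t)(1-ty))^{g(L)-r-1}\}_{r=0}^{g(L)}$ span the space of rational functions of the required type modulo polynomials, so one can solve for the $n_r(y)$ inductively in $r$.
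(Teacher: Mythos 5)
First, a point of comparison: the paper does not actually prove this proposition --- it is quoted from \cite{GS12} (``They prove the following''), so there is no in-paper argument to measure against; judged on its own terms, your proposal has a genuine gap, in fact two. The central one is the step ``integrating this fibrewise identity over $V_r$ via the fibration property''. The $\chi_{-y}$-genus is additive over locally closed decompositions and multiplicative only for Zariski-locally trivial fibrations, but $\Hilb^n(\C_{V_r}/V_r)\to V_r$ is nothing like Zariski-locally trivial: the curves vary in moduli, and the Hodge structure of the total space involves the variation of Hodge structure and the monodromy of the family, so $\chi_{-y}$ of the total space is \emph{not} computed by integrating the fibrewise $\chi_{-y}$-genera over the strata. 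That fibrewise-integration property is special to the topological Euler characteristic (where $e(X)=\int_B e(X_b)\,de$ holds for arbitrary morphisms), and its failure for $\chi_{-y}$ is precisely why the Kool--Shende--Thomas argument does not simply ``upgrade'' and why the refined statement is delicate. A telling symptom: if your stratified argument went through, it would show $n_r(y)=0$ for $r>\delta$, which is exactly \conjref{conjchivan}, left open both here and in \cite{GS12}; the proposition allows $r$ up to $g(L)$ precisely because no such fibrewise control is available. (Your fibrewise input for nodal fibres, the ``Hodge-refined Macdonald/PT identity'', is also only asserted, but that is secondary.)

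The fallback ``formal'' argument in your last paragraph is also incorrect as stated. Writing $g=g(L)$, the functions $t^r\big((1-t)(1-ty)\big)^{g-1-r}$, $r=0,\dots,g$, span only a $(g+1)$-dimensional subspace of the $(2g+1)$-dimensional space of rational functions $P(t)/\big((1-t)(1-ty)\big)$ with $\deg P\le 2g$, namely the subspace cut out by the palindromic constraint $P(t)=y^{g}t^{2g}P\big(1/(ty)\big)$, equivalently the functional equation $Z\big(1/(ty)\big)=y^{1-g}t^{2-2g}Z(t)$. Moreover all terms with $r\le g-1$ are polynomials, so ``spanning modulo polynomials'' is far from sufficient: a general principal part $a/(1-t)+b/(1-ty)$ is not attainable, and the asserted pole bound ``$g(L)-1$'' is not meaningful here (the target expression has at most simple poles, from the single $r=g$ term). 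Thus rationality plus a pole bound cannot close the argument; one needs in addition (i) a degree bound on the numerator, i.e.\ a stabilization statement for $\chi_{-y}(\Hilb^n(\C/\PP^\delta))$ for $n\ge 2g-1$, and (ii) a Serre-duality type symmetry relating $\Hilb^n(\C/\PP^\delta)$ and $\Hilb^{2g-2-n}(\C/\PP^\delta)$ at the level of Hodge numbers of the smooth total spaces. These are exactly the points where the smoothness hypothesis must do real work (and where the argument of \cite{GS12}, which works with the smooth total spaces rather than fibrewise, gets its traction); neither appears in your proposal.
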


This is a weak form of an analogue of \eqref{eqn:EulerFormula}. They conjecture that a precise analogue holds.

\begin{conjecture}\label{conjchivan} Under the assumptions of \propref{propchivan}, we have that
$n_r(y)=0$ for $r>\delta$
\end{conjecture}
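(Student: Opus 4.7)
The plan is to follow the strategy Kool-Shende-Thomas \cite{KST11} used to prove the Euler-characteristic version \eqref{eqn:EulerFormula}, and to refine it from the topological Euler characteristic to the $\chi_{-y}$-genus. The first step is to set up a constructible stratification $\PP^\delta = \bigsqcup_{k \ge 0} V_k$, where $V_k$ parametrizes curves in $\PP^\delta$ with exactly $k$ nodes and no other singularities. By genericity of $\PP^\delta$ inside $|L|$ (together with $\delta$-very ampleness, which ensures the severi strata meet $\PP^\delta$ transversally), $V_k$ is empty for $k > \delta$ and $\dim V_k = \delta - k$ for $0 \le k \le \delta$. This stratification is the key input that caps the possible ``nodal contributions'' at $k = \delta$.

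Second, I would establish a Hodge-theoretic local formula: for a reduced curve $C$ of arithmetic genus $g = g(L)$ with exactly $k$ nodes, the generating function $\sum_n \chi_{-y}(\Hilb^n(C))\, t^n$ equals $\bigl((1-t)(1-ty)\bigr)^{-(g-k-1)}$ times an explicit polynomial factor contributed by the nodes. The starting point is Macdonald's formula for the smooth case, which gives $(1+\cdots)^g$-type factors in the numerator; each node contributes a local factor that refines the MNOP-type formula used in \cite{KST11}. Under the nonsingularity hypothesis on $\Hilb^n(\C/\PP^\delta)$, these local contributions are well-defined at the Hodge level and specialize at $y = 1$ to those of \cite{KST11}.

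Third, I would combine the stratification with the local formula to write
\[
\sum_n \chi_{-y}(\Hilb^n(\C/\PP^\delta))\, t^n \;=\; \sum_{k=0}^{\delta} \chi_{-y}(V_k)\cdot\frac{P_k(t,y)}{\bigl((1-t)(1-ty)\bigr)^{g(L)-k-1}},
\]
where $P_k(t,y)$ is the local contribution of a $k$-nodal fiber. Comparing with \eqref{eqn:HodgeFormula} and inverting the triangular linear system relating the two expansions in powers of $\bigl((1-t)(1-ty)\bigr)^{-1}$, one reads off $n_r(y)$ as a finite combination of $\chi_{-y}(V_k)$ for $k \le \delta$. Since only $k \le \delta$ occurs, all $n_r(y)$ with $r > \delta$ must vanish.

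The central obstacle, and the reason this remains a conjecture, is the passage from stratum to relative Hilbert scheme. The topological Euler characteristic is multiplicative over \emph{arbitrary} constructible stratifications and fibrations, which is what makes the KST proof work; the $\chi_{-y}$-genus (equivalently the specialization of the $E$-polynomial at $\tilde{x}=-y,\tilde{y}=-1$) is additive under locally closed stratifications but is only multiplicative for fibrations when the monodromy acts trivially on the Hodge pieces of the fiber cohomology, or equivalently when the family is motivically a product. One therefore needs to upgrade the local formula to a statement about the relative Hilbert scheme of $\C|_{V_k} \to V_k$, either by showing the relevant monodromy is trivial on Hodge structures (for instance by exhibiting a Zariski-local trivialization after passing to a finite \'etale cover where the nodes are labelled), or by working motivically in a suitable quotient of $K_0(\mathrm{Var})$. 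Proving this multiplicativity is the heart of the conjecture, and any approach would need to address it either directly or via a BPS-style integrality argument leveraging the interpretation of the $n_r(y)$ as refined Gopakumar-Vafa invariants \cite{PT10}.
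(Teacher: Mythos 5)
The statement you are trying to prove is not proved in the paper at all: it is stated as Conjecture~\ref{conjchivan} (taken from \cite{GS12}), and the authors only establish the weaker Proposition~\ref{propchivan}. So there is no proof in the paper to compare against, and the relevant question is whether your proposal actually closes the conjecture. It does not, and you say so yourself. Your first three steps are a reasonable refinement of the Kool--Shende--Thomas strategy: stratify $\PP^\delta$ by the number of nodes (with $V_k=\emptyset$ for $k>\delta$ by genericity), prove a fiberwise formula $\sum_n \chi_{-y}(\Hilb^n(C))\,t^n = P_k(t,y)\bigl((1-t)(1-ty)\bigr)^{-(g-k-1)}$ for a $k$-nodal curve, and then sum over strata and invert the triangular change of expansion to conclude $n_r(y)=0$ for $r>\delta$. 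But the step that glues these together --- writing $\sum_n\chi_{-y}(\Hilb^n(\C/\PP^\delta))\,t^n$ as $\sum_k \chi_{-y}(V_k)\cdot(\text{fiber contribution})$ --- is exactly where the argument is not a proof. The topological Euler characteristic is ``motivically multiplicative'' over constructible families, which is what makes \eqref{eqn:EulerFormula} accessible; the $\chi_{-y}$-genus is only additive over locally closed strata, and for the relative Hilbert schemes of $\C|_{V_k}\to V_k$ one needs either trivial monodromy on the relevant Hodge structures, a motivic product decomposition, or some replacement (e.g.\ a BPS-type integrality/vanishing argument via \cite{PT10}). You name this obstacle explicitly and do not resolve it, so the proposal is an honest reduction of the conjecture to its hard core, not a proof of it.

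Two smaller points if you pursue this route. First, even the fiberwise ``local node contribution'' $P_k(t,y)$ needs an argument at the Hodge level (for a fixed reduced curve this can be done by stratifying $\Hilb^n(C)$ by the subscheme supported at the nodes, using additivity of the $E$-polynomial and the fact that punctual Hilbert schemes of a node are paved by affine cells, together with a $\chi_{-y}$-analogue of Macdonald's formula for the smooth part), so state it as a lemma rather than an appeal to \cite{KST11}. Second, the hypotheses of Proposition~\ref{propchivan} are only that $\Hilb^n(\C/\PP^\delta)$ is nonsingular for all $n$; $\delta$-very ampleness is a sufficient condition, so if your stratification argument genuinely uses transversality of $\PP^\delta$ to the Severi strata you should either derive it from the nonsingularity hypothesis or acknowledge that you are proving the conjecture under a stronger assumption.
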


\begin{definition}
\label{def:refinedInv}
Under the assumptions of Proposition \ref{propchivan}
we put $\widetilde N^{(S,L),\delta}(y):=n_{\delta}(y)/y^{\delta}$, where $n_{\delta}(y)$ is the polynomial in   \eqref{eqn:HodgeFormula}. Following \cite{GS12}, we call the polynomials $\widetilde N^{(S,L), \delta}(y)$ the
\emph{refined invariants} of $S,L$ (there they are called normalized refined invariants). 
It is easy to see from the definition that $\widetilde N^{(S,L),\delta}(y)$ is a Laurent polynomial in $y$, symmetric under $y\mapsto \frac{1}{y}$.
\end{definition}

Finally we extend the definition of the refined invariants $\widetilde N^{(S,L),\delta}(y)$ to arbitrary $L$ and $\delta$, when the $\Hilb^n(\C / \PP^\delta)$ might be singular, or they might not even exist (e.g. if $\delta>\dim |L|$).

Let $f(z):=\frac{z(1-ye^{-z(1-y)})}{1-e^{-z(1-y)}}\in 1+z\QQ[y][[z]]$.
Now let $S$ be smooth projective surface, $L$ a line bundle on $S$. 
Let $Z_n(S)\subset S\times S^{[n]}$ be the universal family with projections
$p:Z_n(S)\to S^{[n]}$, $q:Z_n(S)\to S$.
Let $L^{[n]}:=p_*q^*L$, a vector bundle of rank $n$ on $S$, denote $l_1,\ldots,l_n$ its Chern roots, and denote $t_1,\ldots, t_{2n}$ the Chern roots of the tangent bundle $T_{S^{[n]}}$. The following is proven in \cite[Prop.~47]{GS12}.

\begin{proposition}
Assume $\Hilb^n(\C / \PP^\delta)$ is nonsingular for all $n$. Then 
\begin{equation}\label{eq:resform}
 \chi_{-y}(\Hilb^n(\C / \PP^\delta))=\res_{x=0} \left[
\left(\frac{f(x)}{x}\right)^{\delta+1}\int_{S^{[n]}}\prod_{i=1}^{2n} f(t_i) \prod_{j=1}^n \frac{l_j}{f(l_j+x)}\right].
\end{equation}
(By definition $\prod_{i=1}^{2n} f(t_i) \prod_{j=1}^n \frac{l_j}{f(l_j+x)}\in H^*(S^{[n]},\QQ)[y][[x]]$, thus the term in square brackets on the left hand side of \eqref{eq:resform} is a Laurent series in $x$ with coefficients in $\QQ[y]$.)
\end{proposition}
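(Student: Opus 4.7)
The plan is to realize $\Hilb^n(\C/\PP^\delta)$ as the zero locus of a natural section of a rank-$n$ vector bundle on $Y:=S^{[n]}\times \PP^\delta$, apply the multiplicative Hirzebruch--Riemann--Roch theorem for the $\chi_{-y}$-genus, and then integrate out the $\PP^\delta$-factor as a residue in the hyperplane class of $\PP^\delta$.

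First, writing $W\subset H^0(S,L)$ for the $(\delta+1)$-dimensional subspace with $\PP^\delta=\PP(W)$, the composition
\[
\O_{\PP^\delta}(-1)\hookrightarrow W\otimes \O_{\PP^\delta}\to L^{[n]}\boxtimes \O_{\PP^\delta}
\]
yields a section $\sigma$ of the rank-$n$ bundle $V:=L^{[n]}\boxtimes \O_{\PP^\delta}(1)$ whose vanishing locus is exactly $\Hilb^n(\C/\PP^\delta)$. The nonsingularity hypothesis together with the dimension matching $\dim \Hilb^n(\C/\PP^\delta)=n+\delta=\dim Y-\rk V$ forces $\sigma$ to be a regular section, and the conormal short exact sequence then gives $[T\Hilb^n(\C/\PP^\delta)]=[TY]-[V]$ restricted to $\Hilb^n(\C/\PP^\delta)$ in K-theory.

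Second, the series $f(z)$ of the statement is precisely the Hirzebruch multiplicative class for the $\chi_{-y}$-genus, i.e.\ $\chi_{-y}(X)=\int_X \prod_i f(\tau_i)$ for any smooth projective $X$ with Chern roots $\tau_i$ of $TX$. Combining this with the projection formula $i_*[1]=c_{\mathrm{top}}(V)$ for the regular immersion $\Hilb^n(\C/\PP^\delta)\hookrightarrow Y$ gives
\begin{equation*}
\chi_{-y}(\Hilb^n(\C/\PP^\delta))=\int_Y \prod_{i=1}^{2n} f(t_i)\cdot f(h)^{\delta+1}\cdot \prod_{j=1}^n \frac{v_j}{f(v_j)},
\end{equation*}
where $h=c_1(\O_{\PP^\delta}(1))$, the factor $f(h)^{\delta+1}$ arises from the Euler sequence for $T\PP^\delta$ (using $f(0)=1$), and $v_1,\ldots,v_n$ are the Chern roots of $V$, namely $v_j=l_j+h$. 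Integration over $\PP^\delta$ extracts the coefficient of $h^\delta$, equivalently $\res_{h=0}$ after division by $h^{\delta+1}$; pulling out $(f(h)/h)^{\delta+1}$ then converts the expression to the claimed residue form, leaving a residue in $x$ of a factor carrying the Chern-root data from $V$ and $\prod_i f(t_i)$ integrated over $S^{[n]}$.

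The main obstacle I anticipate is the final matching step: the Chern-root contribution from $V=L^{[n]}\boxtimes \O_{\PP^\delta}(1)$ most naturally produces a factor of the form $\prod_j (l_j+x)/f(l_j+x)$ rather than the form $\prod_j l_j/f(l_j+x)$ appearing in the statement, and one must verify that the two give the same residue after multiplication by $(f(x)/x)^{\delta+1}$. This requires a careful formal manipulation exploiting the nilpotency of the $l_j$ in $H^*(S^{[n]})$ and properties of the residue at $x=0$. The remaining ingredients---the multiplicative Hirzebruch formalism and regularity of $\sigma$ (which is secured by the nonsingularity hypothesis so that no virtual class is needed)---are standard.
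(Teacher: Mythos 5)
Your strategy --- cutting $\Hilb^n(\C/\PP^\delta)$ out of $S^{[n]}\times\PP^\delta$ as the zero locus of the tautological section of $L^{[n]}\boxtimes\O_{\PP^\delta}(1)$, using nonsingularity of the expected dimension $n+\delta$ to make the section regular, applying the multiplicative Hirzebruch class with characteristic series $f$ for $\chi_{-y}$ together with $i_*1=c_n(V)$, and converting $\int_{\PP^\delta}$ into $\res_{x=0}$ --- is the standard argument; the paper itself gives no proof of this proposition beyond the citation to [GS12, Prop.~47], and your route is essentially that proof.

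However, the step you flag as the ``main obstacle'' is a genuine gap, and it cannot be closed in the way you propose: the two residues are \emph{not} equal in general. Their integrands differ by $\bigl(f(x)/x\bigr)^{\delta+1}\cdot x\cdot(\text{a power series in }x)$, which still has a pole of order $\delta$ at $x=0$, so the residues coincide for $\delta=0$ but not for $\delta\ge 1$; no formal manipulation using the nilpotency of the $l_j$ will identify them. A direct check: for $n=1$, $\delta=1$, the relative Hilbert scheme is the universal curve of a general pencil, i.e.\ the blow-up of $S$ at the $L^2$ base points, so $\chi_{-y}=\chi_{-y}(S)+yL^2$. Your formula, with numerator $l_1+x$, returns exactly this, whereas the formula with numerator $l_1$ evaluates to $-\tfrac{(1-y)^2}{6}L^2-\tfrac{(1+y)^2}{4}K_SL$; at $y=1$ with $S=\PP^2$, $L=\O(d)$ this is $3d$ instead of the correct Euler number $d^2+3$. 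The upshot is that \eqref{eq:resform} as printed contains a misprint: the factor should read $\prod_{j=1}^n\tfrac{l_j+x}{f(l_j+x)}$, and your derivation, carried out with the Chern roots $v_j=l_j+x$ of $V$, is a complete and correct proof of that intended statement. So instead of trying to prove the equality of the two residues, you should simply record the corrected formula; note also that regularity of the section requires nonsingularity \emph{of dimension $n+\delta$} (as in Proposition~\ref{propchivan}), not nonsingularity alone, which you implicitly used via the ``dimension matching.''
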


\begin{definition}Let $L$ be a line bundle on a projective surface $S$, let $\delta\in \ZZ_{\ge 0}$. 
The \emph{refined invariants} $\widetilde N^{(S,L),\delta}$ are defined by replacing $\chi_{-y}(\Hilb^n(\C / \PP^\delta))$ by the right hand side of 
\eqref{eq:resform}  in Definition \ref{def:refinedInv}
and \eqref{eqn:HodgeFormula}.

We write $\widetilde N^{d,\delta}(y)=\widetilde N^{(\PP^2,\O(d)),\delta}(y)$ for the refined invariants of 
$\PP^2$.
\end{definition}

At $y = 1$ we have  $\chi_{-1}(-) = e(-)$ and thus recover the
Severi degree as the special case $\widetilde N^{(S,L),  \delta}(1) =
N^{(S,L), \delta}$, for $\Hilb^n(\C/\PP^\delta)$ nonsingular, from \cite[Theorem~3.4]{KST11}. 
The  $\widetilde N^{(S,L), \delta}(y)$ satisfy  universal polynomiality~\cite{GS12}: for each
$\delta$, there is a polynomial $\widetilde
N_\delta(x_1,x_2,x_3,x_4;y)$, such that $\widetilde N^{(S,L), \delta}(y)=\widetilde N_\delta(L^2,LK_S,K_S^2,e(S);y)$. In particular there exist polynomials 
$\widetilde N_\delta(d; y)$ in $d$ and $y$ such
that $\widetilde N_\delta(d; y) = \widetilde N^{d, \delta}(y)$ for all $d,\delta$.
Assuming \conjref{conjchivan}, these polynomials have a multiplicative generating function:
there exist universal power series 
$A_1,A_2,A_3,A_4\in \QQ[y^{\pm 1}][[q]]$, such that
$$\sum_{\delta\ge 0} \widetilde N^{(S,L), \delta}(y)q^{\delta}=
A_1^{L^2}A_2^{LK_S}A_3^{K_S^2}A_4^{e(S)}.$$

More precisely in 
\cite[Conjecture~67]{GS12} a conjectural generating function for the refined invariants $\widetilde N^{(S,L),\delta}(y)$ is given:  Let 
 \begin{align*}
\widetilde\Delta(y,q)&:= q \prod_{n=1}^{\infty}(1-q^n)^{20}(1-yq^n)^{2}
(1-y^{-1}q^n)^2=q - (2y + 2 + 2y^{-1})q  + \ldots,\\
\widetilde{DG}_2(y,q)&:=\sum_{m= 1}^\infty  q^{m} \sum_{d | m}[d]_y^2\frac{m}{d}=q + (y + 4+ y^{-1})q^2  + \ldots.
\end{align*}
Denote $D:=q\frac{\partial}{\partial q}$.
 \begin{conjecture}
\label{Gconj}
There exist universal power series 
$B_1(y,q)$, $B_2(y,q)$ in $\QQ[y,y^{-1}]\lbr q\rbr $, such that 
\begin{equation}
\label{Gform}
\sum_{\delta\ge 0}
\tN^{(S,L),\delta}(y) (\widetilde{DG}_2)^\delta=\frac{(\widetilde{DG}_2/q)^{\chi(L)}B_1(y,q)^{K_S^2}B_2(y,q)^{LK_S}}
{\big(\widetilde\Delta(y,q)\, D \widetilde{DG}_2/q^2)^{\chi(\O_S)/2}}
\end{equation}
Here, to make the change of variables, all functions are viewed as elements of 
$\QQ[y,y^{-1}]\lbr q\rbr $.
\end{conjecture}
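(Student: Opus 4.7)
The plan is to deduce Conjecture~\ref{Gconj} from three ingredients: (i) Conjecture~\ref{conjchivan}, i.e.\ the vanishing $n_r(y)=0$ for $r>\delta$, which would upgrade Proposition~\ref{propchivan} to a sharp $\chi_{-y}$-refinement of the Kool--Shende--Thomas identity~\eqref{eqn:EulerFormula}; (ii) the universal polynomiality of $\widetilde N^{(S,L),\delta}(y)$ in $L^2, LK_S, K_S^2, e(S)$ recalled earlier in the introduction; and (iii) the theorem of~\cite{GS13} establishing the multiplicative generating function when $K_S$ is numerically trivial. Granting (i), one can attempt to lift the BPS/Pandharipande--Thomas argument of~\cite{KST11,PT10}---which extracts the multiplicative shape for the non-refined Severi degrees from the topology of $\Hilb^n(\C/\PP^\delta)$---to the level of Hodge polynomials; combined with (ii) this would produce universal power series $A_1, A_2, A_3, A_4 \in \QQ[y^{\pm 1}][[q]]$ with
\[
\sum_{\delta \ge 0} \widetilde N^{(S,L),\delta}(y)\, q^\delta \;=\; A_1^{L^2} A_2^{LK_S} A_3^{K_S^2} A_4^{e(S)}.
\]
Since $\widetilde{DG}_2 = q + O(q^2)$ is invertible, one may re-express the right-hand side as a product of power series in $\widetilde{DG}_2$; applying Noether's formula $e(S) = 12\chi(\O_S) - K_S^2$ and Riemann--Roch $\chi(L) = \chi(\O_S) + (L^2 - LK_S)/2$ then rewrites the four exponents $L^2, LK_S, K_S^2, e(S)$ in the basis $\chi(L), LK_S, K_S^2, \chi(\O_S)$ used in~\eqref{Gform}.

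To identify the two $K_S$-trivial factors---the $(\widetilde{DG}_2/q)^{\chi(L)}$ in the numerator and the $(\widetilde\Delta \cdot D\widetilde{DG}_2/q^2)^{\chi(\O_S)/2}$ in the denominator---I would specialize the resulting universal expression to K3 surfaces (with $K_S=0$, $\chi(\O_S)=2$, $L^2$ free) and to abelian surfaces (with $K_S=0$, $\chi(\O_S)=0$). On these geometries~\cite{GS13} gives the refined generating function explicitly in terms of the Jacobi forms $\widetilde\Delta$ and $\widetilde{DG}_2$, and matching this against the universal form pins down both the $L^2$-exponential and the $\chi(\O_S)$-exponential. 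It then only remains to determine the two correction series $B_1(y,q)$ and $B_2(y,q)$ attached to the $K_S$-dependent factors. These are fixed by any two-parameter family in which $K_S^2$ and $LK_S$ vary independently; for instance, combining $(\PP^2, \O(d))$ with the Hirzebruch surfaces $(\Sigma_m, L)$ for varying bidegree, Theorems~\ref{thm:satisfyrecursion} and~\ref{thm:refinednodepoly} of the present paper provide enough concrete values of $\widetilde N^{d,\delta}(y)$ and $\widetilde N^{(\Sigma_m,L),\delta}(y)$ to fix $B_1, B_2$ uniquely by interpolation.

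The main obstacle is step (i). Its Euler-characteristic shadow---the vanishing of $n_r$ for $r>\delta$ in~\eqref{eqn:EulerFormula}---was the deep geometric input of~\cite{KST11}, proved via a detailed study of the relative Hilbert scheme $\Hilb^n(\C/\PP^\delta)$ combined with the BPS/Pandharipande--Thomas framework of~\cite{PT10}. Lifting it from Betti numbers to mixed Hodge structures is substantially harder: the $\chi_{-y}$-genus is not a deformation invariant across the degenerations used in the non-refined proof, so a naive repetition fails. A plausible route is a Hodge-module refinement of the perverse filtration on the natural map $\Hilb^n(\C/\PP^\delta) \to \PP^\delta$ in the spirit of $P=W$ and refined BPS theory, which is the same circle of ideas exploited in~\cite{GS13} in the $K_S \sim 0$ case; whether these techniques extend beyond numerically trivial canonical class is the genuine difficulty, and the reason Conjecture~\ref{Gconj} remains open in general.
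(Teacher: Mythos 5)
The statement you are asked to prove is \conjref{Gconj}, and the first thing to say is that the paper does not prove it: it is stated as a conjecture (originating as \cite[Conjecture~67]{GS12}) and remains open. What the paper actually establishes is partial evidence: \cite{GS12} verifies the formula modulo $q^{11}$ and thereby determines $B_1,B_2$ to that order; \cite{GS13} proves it when $K_S$ is numerically trivial; and the present paper, via the refined tropical/floor-diagram machinery (\thmref{thm:refinednodepoly}, \thmref{thm:refinedInvEqualsRefinedSeveri}), confirms the resulting formula for $\delta\le 10$ on $\PP^2$ and in smaller ranges on $\PP^1\times\PP^1$, $\Sigma_m$ and $\PP(1,1,m)$. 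So there is no proof in the paper against which your argument could be matched; the honest benchmark is whether your proposal closes the conjecture, and it does not.

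Your text is a research program rather than a proof, and you say so yourself at the end, but let me name the gaps concretely. Step (i) is exactly \conjref{conjchivan}, which is open; everything downstream is conditional on it. Even granting (i), the paper only asserts (following \cite{GS12}) that one then gets \emph{some} multiplicative generating function in four universal series $A_1,\dots,A_4$; \conjref{Gconj} is strictly stronger because it identifies the $\chi(L)$- and $\chi(\O_S)$-factors with the specific Jacobi-form expressions in $\widetilde{DG}_2$ and $\widetilde\Delta$ after the change of variable $q\mapsto \widetilde{DG}_2$, and your proposed identification via K3 and abelian surfaces presupposes that the universal series, determined by their values on surfaces with $K_S$ numerically trivial, govern all surfaces --- which is precisely the content of the unproved universal multiplicativity in the refined setting. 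Finally, your plan to pin down $B_1,B_2$ by interpolating against $\widetilde N^{d,\delta}(y)$ and $\widetilde N^{(\Sigma_m,L),\delta}(y)$ computed from \thmref{thm:satisfyrecursion} and \thmref{thm:refinednodepoly} has two problems: those theorems compute the \emph{refined Severi degrees} $N^{(S,L),\delta}(y)$, and their equality with the refined invariants $\tN^{(S,L),\delta}(y)$ is itself conjectural (\conjref{ref-sev}, \conjref{conj:smoothToric}), proven in the paper only for $\delta\le 10$ resp.\ $\delta\le 2$; and in any case finitely many numerical coincidences fix $B_1,B_2$ only modulo a finite power of $q$ and cannot establish an identity of power series unless the universal multiplicative shape is already known. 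So the proposal reduces one open conjecture to other open conjectures plus an unestablished Hodge-theoretic lifting of \cite{KST11}; it is a reasonable sketch of why the statement is plausible, but it is not a proof, and the paper offers none either.
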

Equivalently, letting 
$$g(y,t)= t - (y +4+y^{-1})t^2 + (y^2 + 14y + 30 + 14y^{-1} + y^{-2})t^3 + \ldots
$$
 be the compositional inverse of $\widetilde{DG}_2$, \eqref{Gform} says 
\begin{equation}
\label{Gform1}
\sum_{\delta\ge 0}
\tN^{(S,L),\delta}(y) t^\delta=(t/g(y,t))^{\chi(L)}\frac{B_1(y,q)^{K_S^2}B_2(y,q)^{LK_S}}
{\big(\widetilde\Delta(y,q)\, D \widetilde{DG}_2/q^2)^{\chi(\O_S)/2}}\Big|_{q=g(y,t)}.
\end{equation}

In \cite{GS12} this conjecture is proven modulo $q^{11}$ and the power series $B_1(y,q)$, $B_2(y,q)$ are determined modulo $q^{11}$
(the result can be found directly after \cite[Conj.~67]{GS12}). 
Here we list $B_1(y,q)$, $B_2(y,q)$ for completeness modulo $q^6$. 
{\small\begin{align*}
&B_1(y,q)=1 - q - ((y^2 +3y + 1)/y)q^2 
+ ((y^4 + 10y^3 + 17y^2 + 
10y+ 1)/y^2)q^3  
 \\ &- ((18y^4 + 87y^3 + 135y^2 + 87y + 18)/y^2)q^4\\ &
+ ((12y^6 + 210y^5 + 728y^4 + 1061y^3 + 728y^2 + 210y + 12)/y^3)q^5 +O(q^6),\\ 
 &B_2(y,q)=\frac{1}{(1-yq)(1-q/y)}\Big(1 + 3q - ((3y^2 + y + 3)/y)q^2 \\ &+ ((y^4 + 8y^3 + 18y^2 + 8y + 1)/y^2)q^3- ((13y^4 + 53y^3 + 76y^2 + 53y + 13)/y^2)q^4 \\& + ((7y^6 + 100y^5 + 316y^4 + 455y^3 + 316y^2+ 100y  + 7)/y^3)q^5+O(q^6)\Big).\\
\end{align*}}
This gives a formula for the $\widetilde N^{S,L),\delta}(y)$ as explicit polynomials of degree at most $\delta$ in $L^2$, $LK_S$, $K_S^2$, $\chi(\O_S)$
proven for $\delta\le 10$.
The $\widetilde N^{d,\delta}(y)$ are obtained from this by specifying $\chi(L)=\binom{d+2}{2}$, $LK_S=-3$, $K_S^2=9$, $\chi(\O_S)=1$, giving them as polynomials of degree at most  $2\delta$ in $d$.

\subsection{Refined Severi degrees}
Throughout this section we take $S$ to be $\PP^2$,  a rational ruled surface, or a weighted projective space $\PP(1,1,m)$.
In case $S=\PP^2$, let $H$ be a line in $\PP^2$; in case $S$ is a rational ruled surface $\Sigma_m=\PP(\O_{\PP^1}\oplus\O_{\PP^1}(-m))$, let $H$ be the class of a section with $H^2=m$, let $E$ be the class of the section with $E^2=-m$ and $F$ the class of a fibre on $\Sigma_m$.
We denote $H$ the class of a line in $\PP(1,1,m)$ with $H^2=m$. 
For a rational ruled surface $\Sigma_m$ we can also allow $m$ to be negative. In this case $\Sigma_m=\Sigma_{-m}$, but the role of $H$ and $E$ is exchanged.
Therefore below in the case of $\Sigma_m$ we actually represent two different recursion formulas.

Caporaso and Harris showed that the  Severi degrees $N^{d,\delta}$ satisfy a recursion formula \cite{CH98}.
A similar recursion formula computes the Severi degrees $N^{(S,L),\delta}$ on rational ruled surfaces \cite{Va00}.
In \cite{GS12}  a refined Caporaso-Harris type recursion formula is used to define   Laurent polynomials 
$N^{(S,L),\delta}(y)$, which the authors call \emph{refined Severi degrees}. By definition for $y=1$ these polynomials specialize to the 
Severi degrees: $N^{(S,L),\delta}(1)=N^{(S,L),\delta}$.
We now briefly review this recursion and also extend it to $\PP(1,1,m)$.

By a {\it sequence} we mean a collection $\alpha=(\alpha_1,\alpha_2,\ldots)$ of nonnegative integers, almost all of which are zero. For two sequences
$\alpha$, $\beta$ we define 
$|\alpha|=\sum_i\alpha_i$, $I\alpha=\sum_i i\alpha_i$, 
$\alpha+\beta=(\alpha_1+\beta_1,\alpha_2+\beta_2,\ldots)$, and $\binom{\alpha}{\beta}=\prod_i \binom{\alpha_i}{\beta_i}$. We write $\alpha\le\beta$ to mean $\alpha_i\le \beta_i$ for all $i$.
We write $e_k$ for the sequence whose $k$-th element is $1$ and all other ones  $0$.
We usually omit writing down trailing zeros. 

For sequences $\alpha$, $\beta$, and $\delta\ge 0$, let $\gamma(L, \beta,\delta)=\dim|L|-HL+|\beta|-\delta$. 
The relative Severi degree $N^{(S,L),\delta}(\alpha,\beta)$ is the number of $\delta$-nodal curves in $|L|$ not containing $H$, through $\gamma(L, \beta,\delta)$ general points,
and with $\alpha_k$ given points of contact of order $k$ with $H$, and $\beta_k$ arbitrary points of contact of order $k$ with $H$.

\begin{definition}[{\cite[Recur.~76, Prop.~78]{GS12}}] \label{refCHrec}
Recall the definition of the quantum numbers $[n]_y=\frac{y^{n/2}-y^{-n/2}}{y^{1/2}-y^{-1/2}}$.
Let  $L$ be a line bundle on $S$ and let $\alpha$, $\beta$ be sequences with $I\alpha+I\beta=HL$, and 
let $\delta\ge 0$ be an integer. We define the \emph{refined relative Severi degrees} $N^{(S,L),\delta}(\alpha,\beta)(y)$ 
recursively as follows: if $\gamma(L, \beta,\delta)>0$, then 
\begin{equation}\label{refrec}
\begin{split}
N^{(S,L),\delta}(\alpha,\beta)(y)&=\sum_{k:\beta_k>0} [k]_y \cdot  N^{(S,L),\delta}(\alpha+e_k,\beta-e_k)(y)\\
&+\sum_{\alpha',\beta',\delta'}
\left(\prod_i [i]_y^{\beta_i'-\beta_i}\right)
\binom{\alpha}{\alpha'}\binom{\beta'}{\beta}  N^{(S,L-H),\delta'}(\alpha',\beta')(y).
\end{split}
\end{equation}
Here the second sum runs through all  $\alpha',\beta',\delta'$ satisfying the condition
\begin{equation}\label{relcong}
\begin{split}
\alpha'&\le \alpha, \ \beta'\ge \beta,\ I\alpha'+I\beta'=H(L-H),\\ \delta'&=\delta+g(L-H)-g(L)+|\beta'-\beta|+1=\delta-H(L-H)+|\beta'-\beta|.
\end{split}
\end{equation}
{\bf Initial conditions:} if $\gamma(L,\beta,\delta)=0$ we have  $N^{(S,L),\delta}(\alpha,\beta)(y)=0$ unless we are in one of the following cases
\begin{enumerate}
\item 
In case $S=\PP^2$ we put $N^{H,0}((1),(0))(y)=1$,
\item
In case $S=\Sigma_m$, let $F$ be the class of a fibre of the ruling;
we put $N^{kF,0}((k),(0))(y)=1$.
\item 
In case $S=\PP(1,1,m)$, $L=dH$, we put 
and $N^{H,0}((1),(0))(y)=1$.
\end{enumerate}
We abbreviate $N^{(S,L),\delta}(y):=N^{(S,L),\delta}((0),(LH))(y)$,  and,  
in case $S=\PP^2$, $N^{d,\delta}(\alpha, \beta)(y):=N^{(\PP^2,\O(d)),\delta}(\alpha, \beta)(y)$, 
$N^{d,\delta}(y):=N^{d,\delta}((0),(d))(y)$.
The refined relative Severi degrees  are 
Laurent polynomials in $y^{1/2}$, symmetric under $y\mapsto 1/y$.
\end{definition}

\begin{remark}
As mentioned in the beginning of this section, for $S$ a Hirzebruch
surface this recursion is defined for $m\in \ZZ$; in this case $\Sigma_{-m}=\Sigma_m$ but 
the class $H$ on $\Sigma_{-m}$ is the class $E$ on $\Sigma_m$.  For $m\in \ZZ$, we will write $N^{(\Sigma_m,L),\delta}(\alpha,\beta)(y)$ for the invariants obtained by this recursion.  Below in \thmref{thm:refined=tropical} we will see that $N^{(\Sigma_m,L),\delta}(y)=N^{(\Sigma_{-m},L),\delta}(y)$. In general we do not have $N^{(\Sigma_m,L),\delta}(\alpha,\beta)(y)=N^{(\Sigma_{-m},L),\delta}(\alpha,\beta)(y)$, because (expressed on $\Sigma_m$) the first counts curves with contact conditions along $H$ and the second with contact conditions along $E$.
\end{remark}

\begin{remark} The recursions for the refined Severi degrees are chosen so that they specialize at $y=1$ to the recursion for the usual Severi degrees. 
Furthermore the recursions for the tropical Welschinger numbers $W^{(S,L),\delta}_{\text trop}(\alpha,\beta)$ are obtained by specializing instead to $y=-1$. 
Thus we we get:
\begin{equation}\label{eq:pmone}
\begin{split}
N^{(S,L),\delta}(\alpha,\beta)(1)=N^{(S,L),\delta}(\alpha,\beta),\quad N^{(S,L),\delta}(1)=N^{(S,L),\delta}, \\
N^{(S,L),\delta}(\alpha,\beta)(-1)=W^{(S,L),\delta}_{\text trop}(\alpha,\beta),\quad N^{(S,L),\delta}(-1)=W^{(S,L),\delta}_{\text trop}.
\end{split}
\end{equation}
\end{remark}

According to \cite{KS12}, if the general $\PP^\delta \subset |L|$ contains no non-reduced curves
and no curves containing components with negative self intersection, the Severi degrees are computed
by the universal formulas.
  We expect the same for refined Severi degrees. 

\begin{conjecture}[\cite{GS12}]\label{ref-sev}
Let $S$ be $\PP^2$ or a rational ruled surface, let $L$ be a line bundle, and assume 
$\PP^\delta \subset |L|$ contains no non-reduced curves
and no curves containing components with negative self intersection.  
Then the refined Severi degrees are computed
by the universal formulas: $N^{(S,L),\delta}(y) = \widetilde N^{(S,L),\delta}(y)$.  Explicitly,
\begin{enumerate}
\item On $\PP^2$ we have $N^{d,\delta}(y)=\widetilde N^{d,\delta}(y)$, for $d\ge \frac{\delta}{2}+1$.
\item Assume $c+d>0$. We have $N^{(\PP^1\times\PP^1,cF+dH),\delta}(y)=\widetilde N^{(\PP^1\times\PP^1,cF+dH),\delta}(y)$,
for $c,d\ge \frac{\delta}{2}$.
\item On $S=\Sigma_m$ with $m>0$, assume $d+c>0$. Then $N^{(S,cF+dH),\delta}=\widetilde N^{(S,cF+dH),\delta}(y$ 
for $\delta\le \min(2d,c)$.
\end{enumerate}
\end{conjecture}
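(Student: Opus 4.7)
The conjecture asserts that two a priori different Laurent polynomials in $y$ coincide: the recursively defined $N^{(S,L),\delta}(y)$ (which by \thmref{thm:satisfyrecursion} equals the tropical count $N^{\Delta,\delta}_{\trop}(y)$), and the Hilbert-scheme invariant $\widetilde N^{(S,L),\delta}(y)$ extracted from \eqref{eqn:HodgeFormula}. My plan is to show that $\widetilde N^{(S,L),\delta}(y)$ satisfies the same refined Caporaso--Harris recursion \eqref{refrec} as $N^{(S,L),\delta}(y)$, with matching initial conditions; this would force equality.

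First, under the hypothesis that $\PP^\delta \subset |L|$ contains no non-reduced curves and no curves with components of negative self-intersection, I would verify via Bertini-type arguments that $\Hilb^n(\C/\PP^\delta)$ is nonsingular of the expected dimension for $n \le g(L)$. With this, \propref{propchivan} applies directly and \conjref{conjchivan} should hold in this regime, so that $\widetilde N^{(S,L),\delta}(y)$ can be extracted cleanly as $n_\delta(y)/y^\delta$ rather than through the residue formula \eqref{eq:resform}.

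Second, I would extend the Hilbert-scheme construction to define \emph{refined relative invariants} $\widetilde N^{(S,L),\delta}(\alpha,\beta)(y)$, by replacing the universal curve over $\PP^\delta$ with the family of curves in $|L|$ meeting the distinguished divisor $H$ (a line in $\PP^2$, a toric boundary section in $\Sigma_m$ or $\PP(1,1,m)$) with the tangency profile prescribed by $(\alpha,\beta)$. The geometric heart of the argument is then a degeneration formula for the $\chi_{-y}$-genus of the resulting relative Hilbert schemes: degenerating $(S,H)$ to a nodal union $S_0 \cup_H S_1$ and analyzing how the curves break should produce the two sums on the right-hand side of \eqref{refrec}. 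The quantum numbers $[k]_y$ must arise as the $\chi_{-y}$-analogues of the factors $k$ that appear in the classical Caporaso--Harris recursion, which is plausible because $[k]_y$ is essentially the normalized $\chi_{-y}$-genus of $\PP^{k-1}$, and such projective bundles appear in the local Hilbert-scheme contribution at a node of intersection order $k$.

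The main obstacle is establishing this refined degeneration formula: additivity of $\chi_{-y}$ across singular fibers fails in general (unlike Euler characteristic), so one must control weight filtrations carefully, possibly by passing through the refined PT/BPS framework suggested by \cite{PT10} and its topological-string refinement. An alternative route which avoids the full recursion would be to prove directly that $\widetilde N^{(S,L),\delta}(y)$ admits a multiplicative generating function of the form \eqref{Gform}, then use polynomiality of both sides in $(L^2, LK_S, K_S^2, e(S))$ together with \thmref{thm:refinednodepoly} and its analogues on ruled surfaces to match the two generating functions term by term, interpolating from the finite verified range $\delta \le 10$ of \corref{cor:wheretheyagree}. Either way, the initial conditions of \eqref{refrec} must be checked on the small linear systems listed in \defref{refCHrec}, where $\Hilb^n(\C/\PP^\delta)$ reduces to a point and the identity is immediate.
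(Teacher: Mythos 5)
The statement you are attempting to prove is not proved in the paper at all: it appears there as \conjref{ref-sev}, attributed to \cite{GS12}, and remains open. What the paper actually establishes is much weaker: \thmref{thm:refinedInvEqualsRefinedSeveri} verifies the equality $N^{(S,L),\delta}(y)=\widetilde N^{(S,L),\delta}(y)$ only for $\delta\le 10$ on $\PP^2$, $\delta\le 6$ on $\PP^1\times\PP^1$, and $\delta\le 2$ on $\Sigma_m$ and $\PP(1,1,m)$, and it does so by a completely finite argument: both sides are polynomials in the relevant parameters for fixed $\delta$ (the left side by \thmref{thm:refinedSeveridegreepoly}, the right side by the universality results of \cite{GS12}), so computing finitely many values via the recursion \eqref{refrec} and comparing suffices. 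Section~4.3 then supplies further numerical evidence, explicitly without claiming a proof for higher $\delta$.

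Your proposal therefore has genuine gaps exactly where the conjecture is hard. First, the central step --- a degeneration formula for the $\chi_{-y}$-genus of relative Hilbert schemes producing the quantum factors $[k]_y$ and hence the recursion \eqref{refrec} for $\widetilde N^{(S,L),\delta}(\alpha,\beta)(y)$ --- is precisely the missing idea; you acknowledge that $\chi_{-y}$ is not additive over singular fibers, but you offer no mechanism (weight-filtration control, refined PT/BPS input, or otherwise) that actually yields the formula, and no construction of the refined relative invariants is given. Second, your first step quietly assumes \conjref{conjchivan} (the vanishing $n_r(y)=0$ for $r>\delta$), which is itself an open conjecture in the paper; without it, extracting $\widetilde N^{(S,L),\delta}(y)$ ``cleanly'' as $n_\delta(y)/y^\delta$ is not justified even when the relative Hilbert schemes are smooth. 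Third, the proposed ``alternative route'' of interpolating from the verified range cannot give the full statement: the polynomiality of \thmref{thm:refinednodepoly} and \thmref{thm:refinedSeveridegreepoly} is in $d$ (or $c,d,m$) for \emph{fixed} $\delta$, so matching finitely many values only proves the equality for those finitely many $\delta$ --- which is exactly what \thmref{thm:refinedInvEqualsRefinedSeveri} and \corref{cor:wheretheyagree} already do, and no more.
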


Below in Section~\ref{sec:tropicalRefinedCounting} we introduce the (tropical) refined Severi degrees $N^{\Delta,\delta}(y)$ of toric surfaces  $X(\Delta)$ with line bundles $L(\Delta)$ given by  convex lattice polygons $\Delta$, and we show in 
\thmref{thm:refined=tropical} that these coincide with the refined Severi degrees defined above in the case of $\PP^2$, $\Sigma_m$ and $\PP(1,1,m)$. 

We conjecture more generally:
\begin{conjecture}
\label{conj:smoothToric}
Let $\Delta$ be a convex lattice polygon, such that
 $S=X(\Delta)$ is a smooth surface and $L=L(\Delta)$ a $\delta$-very
ample line bundle. Then the (tropical) refined Severi degrees are computed
by the universal formulas: 
\[
N^{\Delta,\delta} (y)= \widetilde N^{(S,L),\delta}(y).
\]
\end{conjecture}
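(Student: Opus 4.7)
The plan is to match both sides of \conjref{conj:smoothToric} to the same universal polynomial in the four intersection numbers $L^2$, $LK_S$, $K_S^2$, $c_2(S)$. On the algebraic side the $\delta$-very ampleness hypothesis guarantees that the relative Hilbert schemes $\Hilb^n(\C/\PP^\delta)$ are smooth, so \propref{propchivan} applies and the refined invariant $\widetilde N^{(S,L),\delta}(y)$ equals the universal polynomial $\widetilde N_\delta(L^2, LK_S, K_S^2, c_2(S); y)$ of \cite{GS12}. It thus suffices to prove that the tropical refined Severi degree $N^{\Delta,\delta}(y)$ is given by the same polynomial in these four intersection numbers.

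First I would extend the refined floor diagram machinery of \secref{sec:FD} beyond $\PP^2$ and rational ruled surfaces to the larger class of smooth toric surfaces whose defining polygon $\Delta$ is $h$-transverse. Following the blueprint sketched in \remref{rmk:hTransverse} and inspired by \cite{AB10}, the argument should adapt cleanly: simple tropical curves decompose into floors over a generic projection direction, and the refined multiplicity $\mult(C;y) = \prod_v [\mult_{\CC}(v)]_y$ factorizes multiplicatively across vertices, so the refined template decomposition of \secref{sec:FD} goes through essentially verbatim. Combined with the refined polynomiality arguments of \secref{sec:refinednodepolys}, this would yield a polynomial expression for $N^{\Delta,\delta}(y)$ in the combinatorial data of $\Delta$ (lattice widths, heights, and boundary counts), with coefficients in $\ZZ[y^{1/2}, y^{-1/2}]$, valid whenever $L(\Delta)$ is sufficiently ample.

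Next I would reinterpret this combinatorial expression as a polynomial in the intersection numbers. By Pick's theorem and adjunction, $L^2 = 2\Area(\Delta)$ and $-LK_S = |\partial \Delta \cap \ZZ^2|$ are determined by $\Delta$, while $K_S^2$ and $c_2(S)$ are determined by the normal fan. The infinite families $\PP^2$, $\Sigma_m$, and $\PP(1,1,m)$ for which \thmref{thm:satisfyrecursion} already establishes the equality supply, for each fixed $\delta$, enough independent values of $(L^2, LK_S)$ at varying $(K_S^2, c_2(S))$ to pin down the candidate polynomial in all four variables. For non-$h$-transverse $\Delta$ I would either reduce to the $h$-transverse case through a lattice-preserving deformation that keeps the four intersection numbers fixed, invoking the tropical invariance of \cite{IM12}, or extend the floor diagram decomposition directly by choosing a sufficiently generic projection direction at the cost of heavier combinatorics.

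The main obstacle is the reinterpretation step: proving that the combinatorial polynomial arising from refined floor diagrams depends only on $L^2, LK_S, K_S^2, c_2(S)$ and not on finer data of $\Delta$. For the non-refined invariant this universality follows a posteriori from the G\"ottsche conjecture (\cite{Tz10, KST11}), but at the refined level it is itself conjectural, proved in \cite{GS13} only when $K_S$ is numerically trivial. A likely workaround is to match against \conjref{Gconj} in the regimes where both sides are computed (for $\delta \le 10$ via \cite{GS12}, and through the $y=1$ specialization to the classical G\"ottsche polynomials for all $\delta$), and to leverage the symmetry $y \mapsto 1/y$ together with the degree bounds from \thmref{thm:refinednodepoly} to constrain the polynomial structure enough to force agreement on the nose.
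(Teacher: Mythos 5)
The statement you are trying to prove is stated in the paper as an open conjecture, and the paper itself offers no proof of it in this generality: what it actually establishes (\thmref{thm:refinedInvEqualsRefinedSeveri}) is the equality $N^{\Delta,\delta}(y)=\widetilde N^{(S,L),\delta}(y)$ only for $\PP^2$ with $\delta\le 10$, $\PP^1\times\PP^1$ with $\delta\le 6$, and $\Sigma_m$, $\PP(1,1,m)$ with $\delta\le 2$, and it does so by brute force: both sides are polynomial in the relevant parameters (by \thmref{thm:refinedSeveridegreepoly} on the tropical side and by the universality of \cite{GS12} on the algebraic side), and the finitely many values needed to force the two polynomials to coincide are computed explicitly via the recursion. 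Your proposal does not close the gap that makes the statement conjectural. The decisive error is your claim that \thmref{thm:satisfyrecursion} ``already establishes the equality'' with $\widetilde N^{(S,L),\delta}(y)$ for the infinite families $\PP^2$, $\Sigma_m$, $\PP(1,1,m)$: that theorem only identifies the tropical count with the \emph{recursively defined} refined Severi degrees $N^{(S,L),\delta}(y)$ of Definition~\ref{refCHrec}. The identification of those with the refined invariants $\widetilde N^{(S,L),\delta}(y)$ is precisely \conjref{ref-sev}, which is known only in the low-cogenus ranges above. So your interpolation step has no supply of proven values beyond what the paper already uses, and consequently it can only reproduce the partial results, not the conjecture.

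Even granting the (plausible, per \remref{rmk:hTransverse}) extension of refined floor diagrams and polynomiality to $h$-transverse polygons, two further obstacles remain. First, your interpolation base is too thin: $\PP(1,1,m)$ is singular for $m\ge 2$, so $\widetilde N^{(S,L),\delta}(y)$ is not even defined there (the paper says this explicitly), and the smooth members of your families only realize $K_S^2\in\{8,9\}$; since toric surfaces satisfy $\chi(\O_S)=1$, you cannot pin down a universal polynomial of degree $\delta\ge 2$ in $K_S^2$ from these values. Second, and more fundamentally, your acknowledged ``main obstacle'' --- showing that the combinatorial polynomial extracted from refined floor diagrams depends on $\Delta$ only through $L^2$, $LK_S$, $K_S^2$, $c_2(S)$ --- is essentially the content of the conjecture itself, and your proposed workaround does not resolve it: matching against \conjref{Gconj} where both sides are computed yields only $\delta\le 10$, while the $y=1$ specialization to the proven G\"ottsche conjecture together with the symmetry $y\mapsto 1/y$ and the degree bound $\delta$ in $y^{\pm 1}$ nowhere near determines the Laurent polynomial (infinitely many symmetric Laurent polynomials of bounded degree share a given value at $y=1$). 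As written, your argument is a program whose unproven core coincides with the open problem, plus a specific misattribution of \thmref{thm:satisfyrecursion}; it is not a proof.
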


In \cite[Cor.~6]{KS12} the following is proven (without the restriction on toric surfaces) for the non-refined invariants, we expect the same is true also in the refined case.
\begin{conjecture}
Let $S$ be a classical toric del Pezzo surface. Assume the following loci have codimension more than $\delta$ in $|L|$:
\begin{enumerate}
\item the nonreduced curves,
\item the curves with a  $(-1)$ curve as a component. 
\end{enumerate}
Then \[
N^{(S,L),\delta}(y) = \widetilde N^{(S,L),\delta}(y).
\]
\end{conjecture}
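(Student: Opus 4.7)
The plan is to transcribe the strategy of \cite[Cor.~6]{KS12} into the refined setting, treating this conjecture as a refined analogue of what is already established in the non-refined case. The two inputs available are \thmref{thm:satisfyrecursion} (giving a tropical/combinatorial description of $N^{(S,L),\delta}(y)$ whenever applicable) and the residue-theoretic definition \eqref{eq:resform} of $\widetilde N^{(S,L),\delta}(y)$. The bridge between them is \propref{propchivan} together with \conjref{conjchivan}.

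First, I would use the codimension hypotheses to argue that a generic $\PP^\delta\subset|L|$ avoids both the locus of nonreduced curves and the locus of curves admitting a $(-1)$-curve as a component. For such a $\PP^\delta$, every parametrized curve is reduced with all components of nonnegative self-intersection, and the only $\delta$-nodal members are precisely those counted by $N^{(S,L),\delta}(y)$. A deformation-theoretic analysis in the spirit of \cite{KS12} should then show that the relative Hilbert scheme $\Hilb^n(\C/\PP^\delta)$ is smooth of dimension $n+\delta$ for every $n$, even when $L$ is not globally $\delta$-very ample on $S$. This smoothness is exactly the hypothesis needed to apply \propref{propchivan} and obtain the expansion \eqref{eqn:HodgeFormula}.

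Second, I would establish the refined vanishing of \conjref{conjchivan}, namely $n_r(y)=0$ for $r>\delta$, in this geometric setting. In the non-refined case this is \cite[Theorem~3.4]{KST11} and is deduced from the Pandharipande--Thomas/BPS structure on the local Calabi--Yau threefold $K_S\to S$. The refined analogue should follow by replacing Euler characteristics with $\chi_{-y}$ genera throughout the PT correspondence, which has been pushed through in \cite{GS13} in the case that $K_S$ is numerically trivial. Once one has \eqref{eqn:HodgeFormula} together with $n_r(y)=0$ for $r>\delta$, the coefficient $n_\delta(y)/y^\delta$ equals $\widetilde N^{(S,L),\delta}(y)$ by definition, and matching this to $N^{(S,L),\delta}(y)$ can then be carried out either directly via the tropical identification (along the lines of \thmref{thm:satisfyrecursion}) or by the same residue computation as in \cite{GS13}.

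The principal obstacle is the second step. The Euler-characteristic PT/BPS argument of \cite{KST11} leverages multiplicativity in a way that has no straightforward analogue for $\chi_{-y}$ outside the $K_S$ numerically trivial regime. For del Pezzo surfaces, where $-K_S$ is ample, any proof must therefore contend with genuinely new Hodge-theoretic contributions arising from the fact that $K_S$ is not a Calabi--Yau threefold in a refined sense. Overcoming this likely requires either a substantial extension of the refined PT/BPS formalism of \cite{GS13}, or an alternative argument carried out directly on the relative Hilbert scheme that sidesteps the threefold. A possible workaround is to combine the tropical computation of $N^{(S,L),\delta}(y)$ with a local--global decomposition of $\chi_{-y}(\Hilb^n(\C/\PP^\delta))$ indexed by singularity types, matching the nodal contribution with the quantum vertex weight $[\mult_\CC(v)]_y$ and showing that the remaining contributions vanish modulo the codimension hypothesis.
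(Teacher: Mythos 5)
The statement you are trying to prove is not proved in the paper at all: it is stated there as a conjecture, with only the remark that its non-refined analogue is \cite[Cor.~6]{KS12} and that ``we expect the same is true also in the refined case.'' So there is no proof in the paper to transcribe or to compare against, and your proposal must be judged as a self-contained argument. As such it is a programme rather than a proof, and the gaps are exactly at the load-bearing points.

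Concretely: (a) your first step asserts that the codimension hypotheses force $\Hilb^n(\C/\PP^\delta)$ to be nonsingular of dimension $n+\delta$ for all $n$; the paper's only smoothness criterion is $\delta$-very ampleness (\cite[Thm.~41]{GS12}), and the non-refined argument of \cite{KS12} does not proceed through smoothness of all relative Hilbert schemes, so this step is unsupported and is needed before \propref{propchivan} even applies. (b) Your second step is precisely \conjref{conjchivan}, the vanishing $n_r(y)=0$ for $r>\delta$, which is open; the refined PT/BPS input you invoke from \cite{GS13} covers only $K_S$ numerically trivial, which excludes every del Pezzo surface, and you acknowledge you have no replacement --- so the central claim is assumed, not proved. (c) Even granting (a) and (b), identifying $n_\delta(y)/y^\delta=\tN^{(S,L),\delta}(y)$ with the recursion-defined $N^{(S,L),\delta}(y)$ is not delivered by \thmref{thm:satisfyrecursion}: that theorem only equates the recursion-defined refined Severi degrees with the tropical counts and says nothing about the refined invariants $\tN$; the equality with $\tN$ is the content of \conjref{ref-sev} and \conjref{conj:smoothToric}, themselves open and verified in the paper only for small $\delta$ (\thmref{thm:refinedInvEqualsRefinedSeveri}). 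In short, steps (b) and (c) each assume a statement equivalent in strength to the conjecture you set out to prove, so the proposal cannot be counted as a proof, correct route or otherwise.
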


\begin{remark}
For $m\ge 2$ the weighted projective space $\PP(1,1,m)$ is singular, so  \conjref{ref-sev} of \cite{GS12} does not apply.
In fact the refined invariants $\widetilde N^{(S,L),\delta}(y)$ have not even been defined in this case.

We instead compare the refined Severi degrees $N^{(\PP(1,1,m),dH),\delta}(y)$ to the corresponding refined invariants $\tN^{(\Sigma_m,dH),\delta}(y)$ on the minimal resolution 
$\Sigma_m$ of $\PP(1,1,m)$. 
\end{remark}

We obtain the following conjectures.

\begin{conjecture} \label{P11mconj}
There is a polynomial $N_{\delta}(d,m;y)$ of degree $2\delta$ in $d$ and $\delta$ in $m$, such that
$N^{(\PP(1,1,m),dH),\delta}=N_{\delta}(d,m;y)$ for $\delta\le min(2d-2,2m-1)$.
\end{conjecture}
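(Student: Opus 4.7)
The plan is to reduce the statement to a two-parameter polynomiality assertion for a refined floor diagram count attached to the triangular lattice polygon $T_{d,m}$ with vertices $(0,0)$, $(md,0)$, $(0,d)$, whose associated toric pair is $(\PP(1,1,m), dH)$. By \thmref{thm:satisfyrecursion} applied to $X(\Delta) = \PP(1,1,m)$, it suffices to prove the asserted polynomiality for the tropical refined Severi degree $N^{T_{d,m},\delta}_\trop(y)$. The triangle $T_{d,m}$ is $h$-transverse, its horizontal slice at height $k$ being an interval of length $m(d-k)$ for $0\le k\le d$, so the approach flagged in Remark \ref{rmk:hTransverse} along the lines of \cite{AB10} should extend the floor diagram description and the refined $y$-statistic of Section \ref{sec:FD} to $T_{d,m}$, yielding a formula
\[
N^{T_{d,m},\delta}_\trop(y) \;=\; \sum_\D \mu(\D; y),
\]
where $\D$ ranges over marked floor diagrams with $d$ floors whose horizontal widths are dictated by the linear function $m(d-k)$. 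The first step would be to verify that this extension goes through, which is essentially a careful bookkeeping adaptation of the Brugall\'e-Mikhalkin argument from the paper.

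Next, I would apply the refined template decomposition from Section \ref{sec:FD}: every floor diagram $\D$ splits as a trivial background of single-weight adjacent floors interrupted by finitely many nontrivial templates $\Theta$, each carrying a refined weight $P_\Theta(y)\in\ZZ[y^{\pm 1/2}]$. Since the cogenus $\delta$ bounds both the total length and the total horizontal displacement of the templates that appear, only finitely many templates $\Theta$ contribute for fixed $\delta$. The contribution of $\Theta$ then factors as $P_\Theta(y)$ times a count of legal placements of $\Theta$ inside $T_{d,m}$. This count depends polynomially on $d$ through the number of vertical positions of the template among the $d$ floors, and polynomially on $m$ through the number of horizontal entry points at each relevant floor, which is a linear function of $m(d-k)$. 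Aggregating over templates and tracking degrees should yield an expression polynomial of degree at most $2\delta$ in $d$ and $\delta$ in $m$, with sharpness forced by the ``simple node'' template inserted $\delta$ times.

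The main obstacle I anticipate is the simultaneous stabilization in both parameters. The hypothesis $\delta \le \min(2d-2,2m-1)$ should act as a two-sided threshold beyond which templates can neither interact with each other nor touch the slanted edge from $(md,0)$ to $(0,d)$, so that placement counts decouple across templates and become genuinely polynomial in $(d,m)$. Carefully checking this decoupling, and in particular analyzing how the refined multiplicity and the $h$-transverse floor diagram formalism behave near the slanted edge of slope $-1/m$, is the main novel point that is not a direct translation of the $\PP^2$ or $\Sigma_m$ arguments already in the paper. Once one verifies that no exceptional templates are forced at this slanted edge, the template list already present in Section \ref{sec:FD} suffices and the desired polynomial $N_\delta(d,m;y)$ is produced as the sum of template-wise polynomial contributions.
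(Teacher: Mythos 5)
The statement you are attempting is left as a conjecture in the paper: what the paper actually proves by the floor-diagram/template method is Theorem~\ref{thm:refinedSeveridegreepoly}(3), i.e.\ polynomiality of $N^{(\PP(1,1,m),dH),\delta}(y)$ of degree $2\delta$ in $d$ and $\delta$ in $m$ \emph{only} in the range $d\ge\delta$ and $m\ge 2\delta$; the range $\delta\le\min(2d-2,2m-1)$ asserted in Conjecture~\ref{P11mconj} is supported there only by computations ($\delta\le 6$, $d\le 5$, $m\le 5$, Remark~\ref{corseries}). Your proposal follows exactly that proof (pass to the tropical count via Theorem~\ref{thm:refined=tropical}, decompose floor diagrams into templates, count placements) --- incidentally, the floor-diagram formalism for $\PP(1,1,m)$ is set up directly in Section~\ref{sec:FD}, so no appeal to Remark~\ref{rmk:hTransverse} is needed --- and therefore it can at best reproduce the thresholds $d\ge\delta$, $m\ge 2\delta$, not the conjectured ones.

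The concrete gap is your assertion that the hypothesis $\delta\le\min(2d-2,2m-1)$ makes the templates decouple. In the template formalism the divergence condition $\mathrm{div}(j)\le m$ (all $s_j=0$ for $\PP(1,1,m)$, $c=0$) restricts admissible starting positions: reconstructing a floor diagram from a template collection requires adding a nonnegative number $(j-1)m-\varkappa_{\cdot}(\Gamma_i)$ of short edges, and $\varkappa_j(\Gamma)$ can be as large as $2\delta(\Gamma)$ (the cogenus-$\delta$ template with $\delta$ parallel weight-$2$ edges has $\varkappa_1=2\delta$). Hence for $m<2\delta$ such templates are excluded from the early floors, the set of valid placements is no longer the simplex $A_{\GGamma}(d)$ of \eqref{eqn:FmAndP111polytope}, its combinatorial type varies with $m$, and the ``discrete integral'' argument does not yield a single polynomial in $(d,m)$ --- let alone the same polynomial that computes the count in the stable range $m\ge 2\delta$. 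Similarly, the bound $\delta\le 2d-2$ (i.e.\ $d\ge\delta/2+1$) is below what template-length bookkeeping gives: even for $\PP^2$ the paper only reaches $d\ge\delta$, using the extra statistic $s(\Gamma)$ of \cite{FB}, and the threshold $d\ge\delta/2+1$ is itself conjectural (cf.\ Conjecture~\ref{ref-sev}). So the decoupling you invoke in the conjectured range is precisely the open content of the statement, and your argument does not supply it; as written it proves (a version of) Theorem~\ref{thm:refinedSeveridegreepoly}(3), not Conjecture~\ref{P11mconj}.
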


\begin{conjecture}\label{P11mconjser}
There exist power series $C_1,C_2,C_3\in \QQ[y^{\pm1}][[q]]$, such that
$$\sum_{\delta\ge 0} N_{\delta}(d,m;y)(\widetilde{DG_2})^{\delta}=\Bigg(\sum_{\delta\ge 0}\tN^{(\Sigma_m,dH),\delta}(\widetilde{DG_2})^\delta\Bigg)C_1^{(m+2)d}C_2^{m+2} C_3
.$$
\end{conjecture}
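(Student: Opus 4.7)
The plan is to use the tropical and floor-diagram machinery established in the body of the paper to reduce Conjecture~\ref{P11mconjser} to a comparison statement between tropical curves in the triangle polygon for $\PP(1,1,m)$ and tropical curves in the trapezoid polygon for $\Sigma_m$, and then to invoke Conjecture~\ref{Gconj} on $\Sigma_m$ to extract the multiplicative structure. First, by \thmref{thm:refined=tropical}, both $N^{(\PP(1,1,m),dH),\delta}(y)$ and $\tN^{(\Sigma_m,dH),\delta}(y)$ (assuming \conjref{Gconj} for the latter) admit tropical/floor-diagram expressions, and by \conjref{P11mconj} the left-hand side is a polynomial $N_\delta(d,m;y)$ of bi-degree $(2\delta,\delta)$ in $(d,m)$. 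Thus the problem is to prove a universal identity among formal power series whose coefficients are certain polynomials in $d,m,y$.

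Next I would set up the bridge between the two toric surfaces. The polygon $\Delta_{\PP(1,1,m)}(d)$ is the triangle with vertices $(0,0),(d,0),(0,d)$ (with the hypotenuse of $(-1,-m)$-slope), and the polygon $\Delta_{\Sigma_m}(d)$ for $dH$ on $\Sigma_m$ is obtained by truncating this triangle along an edge perpendicular to the exceptional class $E$. Any tropical curve contributing to $N^{(\PP(1,1,m),dH),\delta}(y)$ pulls back to a tropical curve in $\Delta_{\Sigma_m}(d')$ for some $d'$ with prescribed tangency profile $\alpha$ along the $E$-edge; collecting curves by $\alpha$ writes
\begin{equation*}
N^{(\PP(1,1,m),dH),\delta}(y)\;=\;\sum_{\alpha,\,\delta'}\Big(\prod_i[i]_y^{\alpha_i}\Big)\cdot\frac{1}{\prod_i i^{\alpha_i}\alpha_i!}\cdot N^{(\Sigma_m,L'),\delta'}(\alpha,(0))(y),
\end{equation*}
where $L'$ and $\delta'$ are determined by $d,\alpha,\delta$ as in \eqref{relcong}. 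This is essentially the degeneration of $\PP(1,1,m)$ to the union of $\Sigma_m$ with the affine cone on $E$, reinterpreted tropically. Polynomiality in $d$ and $m$ of each term, for $\delta$ in the conjectured range, follows from the floor-diagram arguments of Section~\ref{sec:FD} combined with the polynomiality of the relative refined Severi degrees.

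The final step is extracting the multiplicative form. After the change of variable $q=g(y,t)$, the generating function for $\tN^{(\Sigma_m,dH),\delta}(y)$ becomes, under \conjref{Gconj}, a product $A_1(y,q)^{L^2}A_2(y,q)^{LK}A_3(y,q)^{K^2}A_4(y,q)^{e}$ evaluated at $L^2=md^2$, $LK=-(m+2)d$, $K^2=8$, $e=4$. The relative version admits a parallel multiplicative description with a universal factor per tangency endpoint. Substituting this into the sum of the previous paragraph and collecting terms in the exponents of $d,m,\text{constants}$ shows that the ratio between the $\PP(1,1,m)$ and $\Sigma_m$ generating functions depends on $d,m$ only through the linear combinations $(m+2)d$ and $m+2$ (plus a pure constant piece), producing precisely $C_1^{(m+2)d}C_2^{m+2}C_3$ with $C_1,C_2,C_3\in\QQ[y^{\pm 1}][[q]]$. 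The main obstacle will be Step~2: controlling the tangency/contact data at the singular fixed point of $\PP(1,1,m)$ so that the floor-diagram contributions rearrange cleanly into a universal series, and confirming that no non-multiplicative cross-terms in $d$ and $m$ survive—this is where the polynomial degree bound $\delta$ in $m$ from \conjref{P11mconj} is essential. Computing $C_1,C_2,C_3$ modulo a low power of $q$ from the tabulated cases $\delta\le 10$ would give the evidence, in exact analogy with the way $B_1,B_2$ were determined after \cite[Conj.~67]{GS12}.
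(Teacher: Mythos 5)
First, a point of calibration: the statement you are trying to prove is stated in the paper as a \emph{conjecture}, and the paper offers no proof of it at all --- only numerical evidence (\remref{corseries}: the recursion was used to compute $N^{(\PP(1,1,m),dH),\delta}(y)$ for $\delta\le 6$, $d\le 5$, $m\le 5$, which is consistent with \conjref{P11mconj} and \conjref{P11mconjser} and, assuming them, pins down $C_1,C_2,C_3$ modulo $q^7$). So there is no ``paper proof'' to match, and any genuine proof would be new. Your proposal, however, does not supply one: at the decisive step it is circular. After you substitute the (itself conjectural) multiplicative form of \conjref{Gconj} for $\tN^{(\Sigma_m,dH),\delta}(y)$ and sum your degeneration formula over tangency profiles, you simply assert that ``the ratio \ldots depends on $d,m$ only through $(m+2)d$ and $m+2$ (plus a pure constant piece)''. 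That assertion \emph{is} \conjref{P11mconjser}; no mechanism is given for why the cross-terms in $d$ and $m$ cancel, and the polynomial degree bounds from \conjref{P11mconj} do not force an exponential-in-$(m+2)d$, $(m+2)$ structure --- polynomiality of each coefficient is far weaker than multiplicativity of the generating series.

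There are further concrete gaps in the intermediate steps. The degeneration formula you write, expressing $N^{(\PP(1,1,m),dH),\delta}(y)$ as a sum over $\alpha$ of relative invariants $N^{(\Sigma_m,L'),\delta'}(\alpha,(0))(y)$ with prefactor $\prod_i[i]_y^{\alpha_i}\cdot\bigl(\prod_i i^{\alpha_i}\alpha_i!\bigr)^{-1}$, is not proved anywhere (the refined Caporaso--Harris recursion of Definition~\ref{refCHrec} and \thmref{thm:refined=tropical} give a different, iterated relation, and the $1/\prod_i i^{\alpha_i}\alpha_i!$ factor belongs to classical absolute-from-relative degeneration formulas whose refined analogue you would have to establish). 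Your ``parallel multiplicative description with a universal factor per tangency endpoint'' for relative refined invariants does not exist in the paper or in \cite{GS12}; it is an essential missing ingredient, not a citation. Also note that for $L=dH$ on $\Sigma_m$ the line bundle is not ample (it is trivial on $E$), its polygon degenerates to the same triangle as for $\PP(1,1,m)$, and $\tN^{(\Sigma_m,dH),\delta}(y)$ is defined only through the algebraic/universal-polynomial side, not tropically --- so the geometric picture of ``truncating the triangle along an edge and recording tangencies along $E$'' needs to be set up with a genuine strict transform/relative geometry argument (with $dH+cF$, $c>0$, and a limit or correction as $c\to 0$), none of which is sketched. As it stands, the proposal assumes \conjref{Gconj} and \conjref{P11mconj}, invents an unproven relative multiplicative structure, and then restates the conclusion; the honest status of the statement remains conjectural, supported by the computations reported in \remref{corseries}.
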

\begin{remark}
\label{corseries}
We have used the Caporaso-Harris recursion to compute $N^{(\PP(1,1,m),dH),\delta}$ for $\delta\le 6$, $d\le 5$ and $m\le 5$.
The results confirm \conjref{P11mconj}, \conjref{P11mconjser}. Furthermore assuming these conjectures they determine
$C_1,C_2,C_3$ modulo $q^{7}$. We list them modulo $q^6$. Conjecturally this gives in particular  $N^{(\PP(1,1,m),dH),\delta}$ for 
$\delta\le 5$, $d\ge 4$, $m\ge 3$.

{\small\begin{align*}
C_1&=1 - ((y^2 + 3y + 1)/y)q + ((6y^2 + 11y + 6)/y)q^2 -((4y^4 + 36y^3 + 60y^2 + 36y+ 4)/y^2)q^3 \\
&+ ((y^6 + 54y^5 + 243y^4 + 373y^3 + 243y^2 + 54y + 1)/y^3)q^4 \\& - ((41y^6 + 525y^5 + 1723y^4 + 2478y^3 + 1723y^2 + 525y + 41)/y^3)q^5 +O(q^6),
\\
C_2&=\frac{1}{(1-qy)(1-q/y)}\Big( 1 + 2q- ((2y^2 + 2y + 2)/y)q^2 \\
&+ ((y^4 + 6y^3 + 11y^2 + 6y + 1)/y^2)q^3 - ((10y^4 +38y^3 + 56y^2 + 38y + 10)/y^2)q^4 \\
&+ ((7y^6 + 79y^5 + 241y^4 + 339y^3 + 241y^2 + 79y + 7)/y^3)q^5+O(q^6) \Big),\\
C_3&=1 + 2q - ((4y^2 + 6y + 4)/y)q^2 + ((20y^2 + 32y + 20)/y)q^3 - ((19y^4 + 100y^3 + 170y^2 \\&+ 100y + 19)/y^2)q^4 + ((4y^6 + 154y^5 + 564y^4 + 824y^3 + 564y^2 + 154y + 4)/y^3)q^5+O(q^6).
\end{align*}}
\end{remark}

Denote by $N_0^{(S,L),\delta}$ the irreducible Severi degrees, i.e.  the number of irreducible $\delta$-nodal curves in $|L|\ne |E|$ passing though $\dim|L|-\delta$ general points. In particular it is clear that $N_0^{(S,L),\delta}\ge 0$ and  $N_0^{(S,L),\delta}= 0$ if $\delta>g(L)$.
In \cite{Ge97} it is noted in case  $S=\PP^2$, and   in \cite{Va00} for rational ruled surfaces, that the $N_0^{(S,L),\delta}$ can be expressed by a formula in terms of the 
Severi degrees $N^{(S,L),\delta}$. In \cite{GS12}  \emph{irreducible refined Severi degrees} $N_0^{(S,L),\delta}(y)$ are defined by the same formula 
\begin{equation}
\label{eqn:irred_refined_Severi_definition}
\sum_{L,\delta} \frac{z^{\dim|L|-\delta}}{(\dim|L|-\delta)!} v^L  N_0^{(S,L),\delta}(y)=
\log\left(1+\sum_{L,\delta}\frac{z^{\dim|L|-\delta}}{(\dim|L|-\delta)!} v^L   N^{(S,L),\delta}(y)\right).
\end{equation}
Here $\big\{v^L\big\}_{L \text{ effective}, L\ne E}$ are elements of the Novikov ring, i.e. $v^{L_1}v^{L_2}=v^{L_1+L_2}$.
Evidently $ N_0^{(S,L),\delta}(y)$ is a Laurent polynomial in $y$ invariant under $y\mapsto 1/y$, and
$N_0^{(S,L),\delta}(1)=N_0^{(S,L),\delta}$.

We will show below that $N_0^{(S,L),\delta}(y)$ is a count of
irreducible tropical curves 
with Laurent polynomials in $y$ with nonnegative integer coefficients
as multiplicities, see \thmref{thm:irreducible_refined_Severi_degree}.
In particular, $N_0^{(S,L),\delta}(y) \in \ZZ_{\ge 0}[y^{\pm
  1}]$. Furthermore, $N_0^{(S,L),\delta}(y)=0$, if $\delta>g(L)$.

\section{Refined Tropical Curve Counting}
\label{sec:tropicalRefinedCounting}

We now define a refinement of Severi degrees for any toric surface, by
introducing a ``$y$-weight'' into Mikhalkin's tropical curve
enumeration. For the surfaces $S = \Sigma_m$ and $S = \PP(1,1,m)$,
the new invariants agree with the refined Severi degrees defined via the
recursion in Definition~\ref{refCHrec}. We extend our definition to
the case of tangency conditions in Section~\ref{sec:RefRelSevDegs}. We denote
tropical curves and classical curves with the same notation $C$, as it
usually will be clear which curves we are talking about.

\begin{definition}
A \emph{metric graph} is a non-empty graph whose edges $e$ have a \emph{length}
$l(e) \in
\RR_{>0} \cup \{ \infty \}$.

An \emph{abstract tropical curve} $C$ is a metric graph with all
vertices of valence $1$ or at least $3$ such
that, for an edge $e$ of $C$, we have length $l(e) = \infty$
precisely when $e$ is
adjacent to a leaf (i.e., a $1$-valent vertex) of $C$. We conventionally remove
the (infinitely far away) leaf vertices from $C$. 
\end{definition}

Note that we do not require the underlying graph of a metric graph to
be connected. Connectedness will correspond to the irreducibility of
algebraic curves. Let $\Delta$ be a lattice polygon in $\RR^2$. A
non-zero vector $u \in \ZZ^2$ is \emph{primitive} if its entries are coprime.

\begin{definition}
\label{def:parametrizedTropicalCurve}
A \emph{(parametrized) tropical curve of degree $\Delta$} is an
abstract tropical curve $C$, together with a continuous map $h: C \to
\RR^2$ satisfying:
\begin{enumerate}
\item (Rational slope) The map $h$ is affine linear on each edge $e$ of $C$, i.e.,
  $h|_e(t) = t\cdot v + a$ for some non-zero $v \in \ZZ^2$ and $a \in
  \RR^2$. If $V$ is a vertex of the edge $e$ and we parametrize $e$
  starting at $V$, then we call $v$ above the \emph{direction vector}
  of $e$ starting at $V$, and we write $v = v(V, e) \in \ZZ^2$. The lattice
  length of $v(V,e)$ (i.e, the greatest integral common divisor of
  the entries of $v(V, e)$) is the \emph{weight} $\omega(e)$ of
  $e$. We call the integral vector $u(V,e) = \tfrac{1}{\omega(e)} v(V,e)$ the
  \emph{primitive direction vector} of $e$.
\item (Balancing) Each vertex $V$ of $C$ is \emph{balanced}, i.e.,
\[
\sum_{e: \, V \in \partial e} v(V, e) = 0.
\]
\item (Degree) For each primitive vector $u \in \ZZ^2$, the total weight
  of the unbounded edges with primitive direction vector $u$ equals the lattice length of an edge of
  $\partial \Delta$ with outer normal vector $u$ (if there is no such
  edge, we require the total weight to be zero).
\end{enumerate}
\end{definition}

\begin{example}
\label{ex:tropicalCurve}
Below, in Figure~\ref{fig:HirzebruchCurve} (left), is an example of a
(parametrized) tropical curve of degree $\Delta$, pictured to its
right. One edge is of weight $2$, all others have weight $1$ (omitted
in the drawing).  All
vertices of $C$ are balanced, for vertex $v$ this means that
$2\binom{0}{1} + \binom{-1}{0} + \binom{1}{-2} = 0$.

\begin{figure}[htbp]
 \begin{tabular}{ccc}
\includegraphics[scale = 0.25]{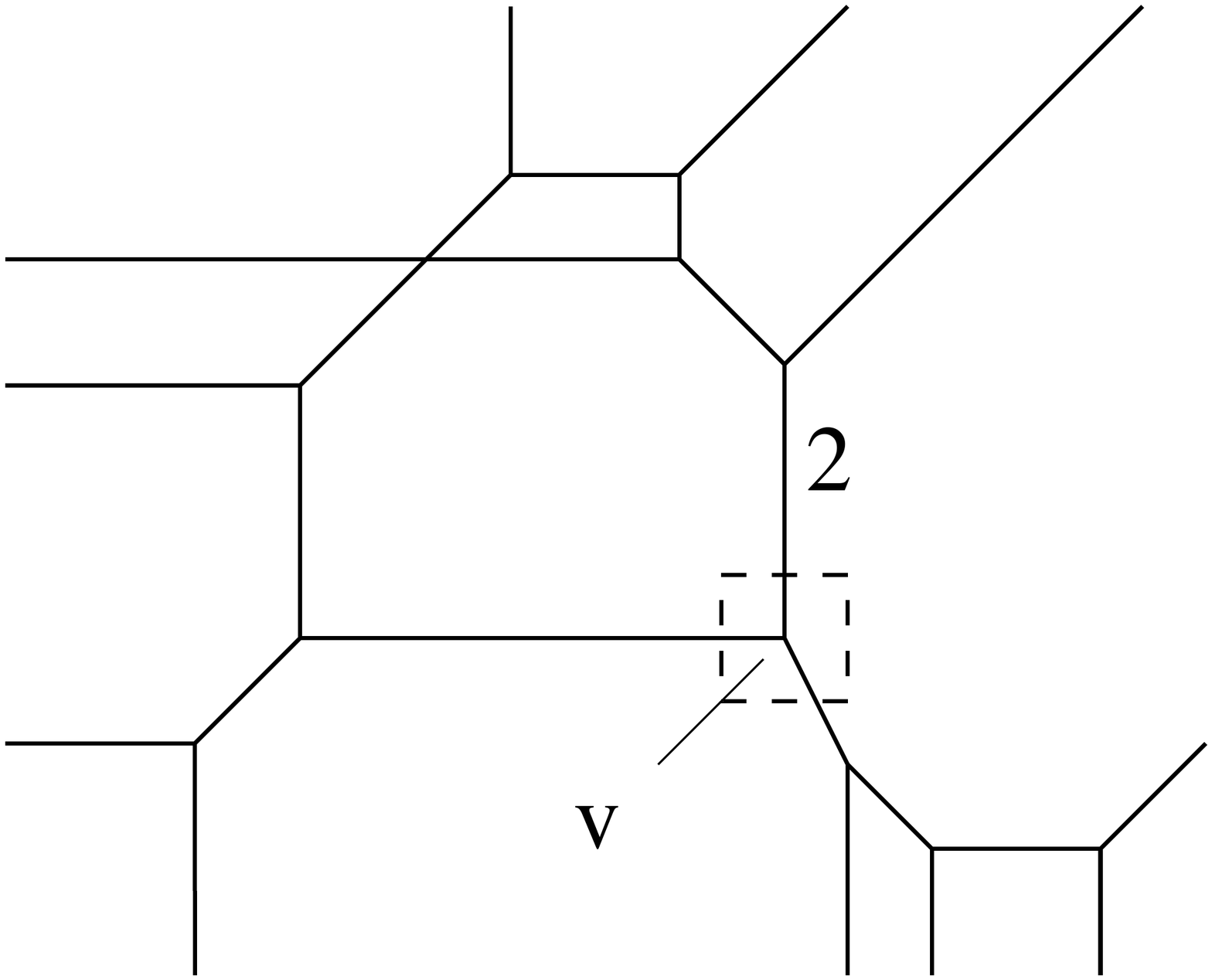}  \quad \quad
&
\quad 
\raisebox{55pt}{
\includegraphics[scale = 1.2]{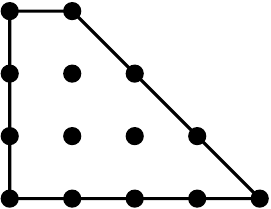}} \quad \quad
&
\quad 
\raisebox{25pt}{
 \includegraphics[scale = 0.15]{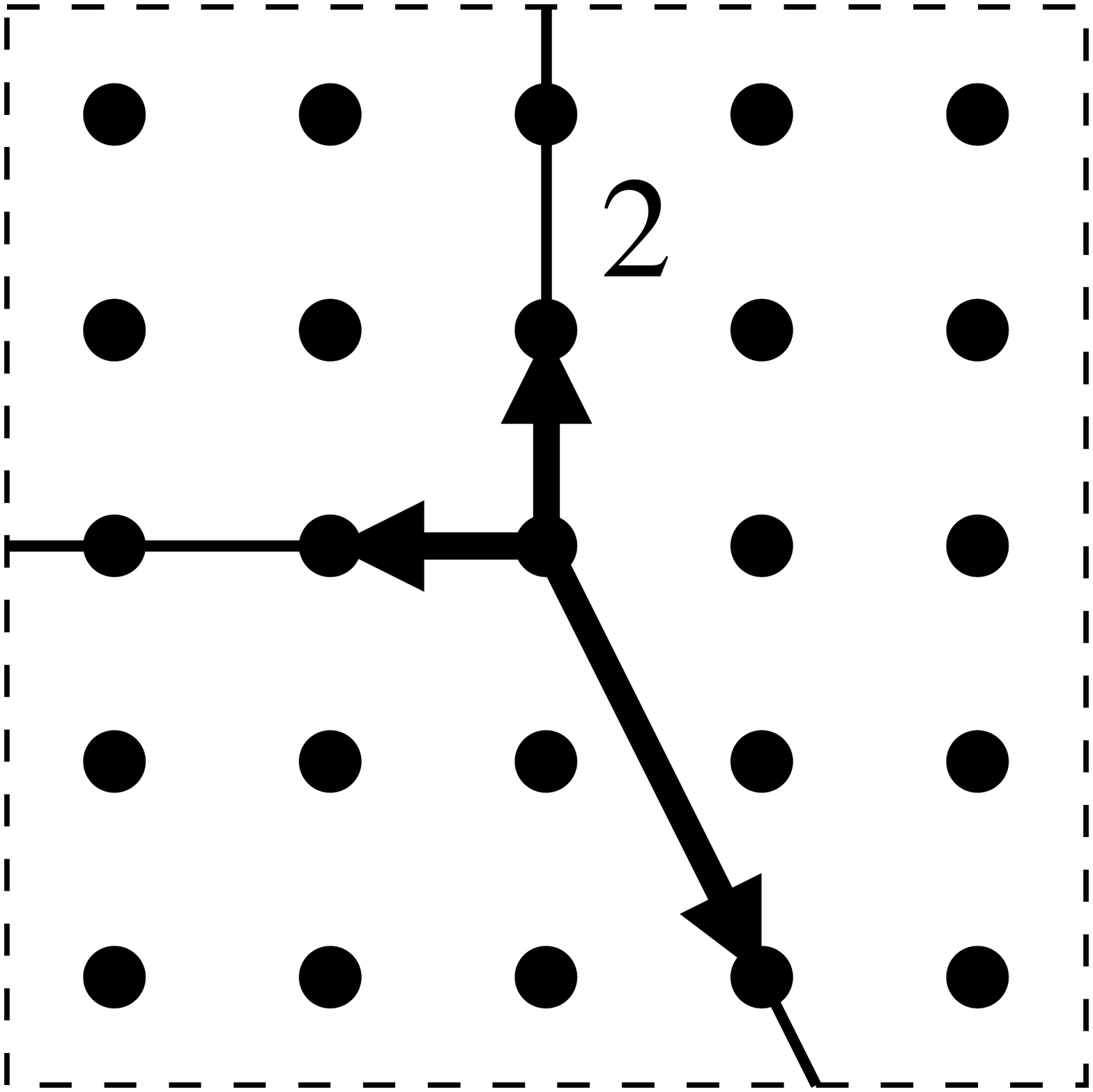}} \\
\vspace{-4em}
 \end{tabular}
\caption{A tropical curve (left) of degree $\Delta$ (middle) and a
  balanced vertex (right).}
\label{fig:HirzebruchCurve}
\end{figure}
\end{example}

In order to define the tropical analogs of the Severi degree and its
refinement, we recall the following tropical notions
(cf. \cite[Section~2]{Mi05}). We sometimes abuse notation and
simply write $C$ for the parametrized tropical curve $(C,
h)$ if no confusion can occur. 

\begin{definition}
\label{def:tropicalCurveProperties}
\begin{enumerate}
\item We say that a tropical curve $(C, h)$ is \emph{irreducible} if
 the underlying topological space of $C$ has exactly $1$
 component. The \emph{genus} $g(C,h)$ of an irreducible tropical curve $(C, h)$ is the
genus (i.e., the first Betti number) of the underlying topological
space of $C$.
\item
\label{itm:dualSubdivision}
 The \emph{dual subdivision} $\Delta_C$ of the parametrized
  tropical plane curve $(C, h)$ is the unique subdivision of $\Delta$
  whose $2$-faces $\Delta_v$ correspond to the vertices $v$ of $h(C)$ such that
 the (images of) edges $e$ of $C$ are orthogonal to the
 edges $e^\perp \in \RR^2$ of $\Delta_C$ and, further, that the
 lattice length of $e^{\perp}$ equals $\omega(e)$, see Figure~\ref{fig:dualSubdivision}.
\item The tropical curve $(C, h)$ is \emph{nodal} if its dual
  subdivision $\Delta_C$ consists only of triangles and
  parallelograms.
\item We say that $(C, h)$ is \emph{simple} if all vertices of
  $C$ are $3$-valent, the self-intersections of $h$ are disjoint
  from vertices, and the inverse image under $h$ of self-intersection
  points consists of exactly two points of $C$.
\item
\label{itm:numberOfNodesIrred} 
The \emph{number of nodes} $\delta(C, h)$ of a nodal irreducible
  tropical curve of degree $\Delta$ is $\delta(C, h) = |\Delta^0
  \cap \ZZ^2|  - g(C, h)$, where $ |\Delta^0
  \cap \ZZ^2|$ is the number of interior lattice points of
  $\Delta$. Equivalently, $\delta(C, h)$ is the number of
  parallelograms of the dual subdivision $\Delta_C$ if $(C, h)$ is simple.
\item
\label{itm:numberOfNodesRed}
Let $(C, h)$ be a nodal tropical curve with irreducible
  components $(C_1, h_1), \dots, (C_t, h_t)$ (i.e.,
  $C_i$ are the components of $C$ and $h_i$ are the
  restrictions of $h$ to $C_i$), of degrees $\Delta_1, \dots,
  \Delta_t$ and number of nodes $\delta_1, \dots, \delta_t$, respectively. (Note that the Minkowski sum $\Delta_1 +
  \cdots + \Delta_t$ equals $\Delta$.) The \emph{number of nodes} of
  $(C, h)$ is
\[
\delta(C, h) = \sum_{i=1}^t \delta_i + \sum_{i < j} \M(\Delta_i, \Delta_j),
\]
where $\M(\Delta_i, \Delta_j) := \tfrac{1}{2}(\Area(\Delta_i+\Delta_j)
- \Area(\Delta_i) - \Area(\Delta_j))$ is the \emph{mixed area} of $\Delta_i$
and $\Delta_j$. Here, $\Area(-)$ is the \emph{normalized} area, given by twice the
Euclidian area in $\RR^2$.

 Equivalently, $\delta(C, h)$ is the number of
  parallelograms of the dual subdivision $\Delta_C$ if $(C, h)$ is simple.
\end{enumerate}
\end{definition}

\vspace{0.6em}
\noindent {\bf Example~\ref{ex:tropicalCurve} (cont'd).}
The tropical curve of Example~\ref{ex:tropicalCurve} has genus $1$ as it is the image
of a trivalent genus $1$ graph. It is not the union of two tropical
curves and thus irreducible. Its number of nodes is, thus, equal to $|\Delta^0
\cap \ZZ^2| - g = 3 - 1 = 2$. The two tropical nodes are ``visible''
as the pair of edges crossing transversely as well as the edge of
weight $2$. (In general, a transverse intersection of two edges $e$
and $e'$ contributes $|u(V,e) \wedge u(V',e')|$ to $\delta(C)$, for any
adjacent vertices $V$ and $V'$, while an edge of multiplicity $m$ contributes $m-1$
to $\delta(C)$.)
\vspace{0.6em}

Definition~\ref{def:tropicalCurveProperties} (\ref{itm:numberOfNodesIrred}
is motivated by the classical degree-genus formula. 
In Definition~\ref{def:tropicalCurveProperties} (\ref{itm:numberOfNodesRed}),
the formula for $\delta(C, h)$ is chosen according to
Bernstein's theorem~\cite{Be75}, so that
Theorem~\ref{thm:MikhalkinCorrespondenceTheorem} holds.

\begin{figure}[htbp]
\vspace{-2em}
 \begin{tabular}{cc}
\includegraphics[scale = 0.25]{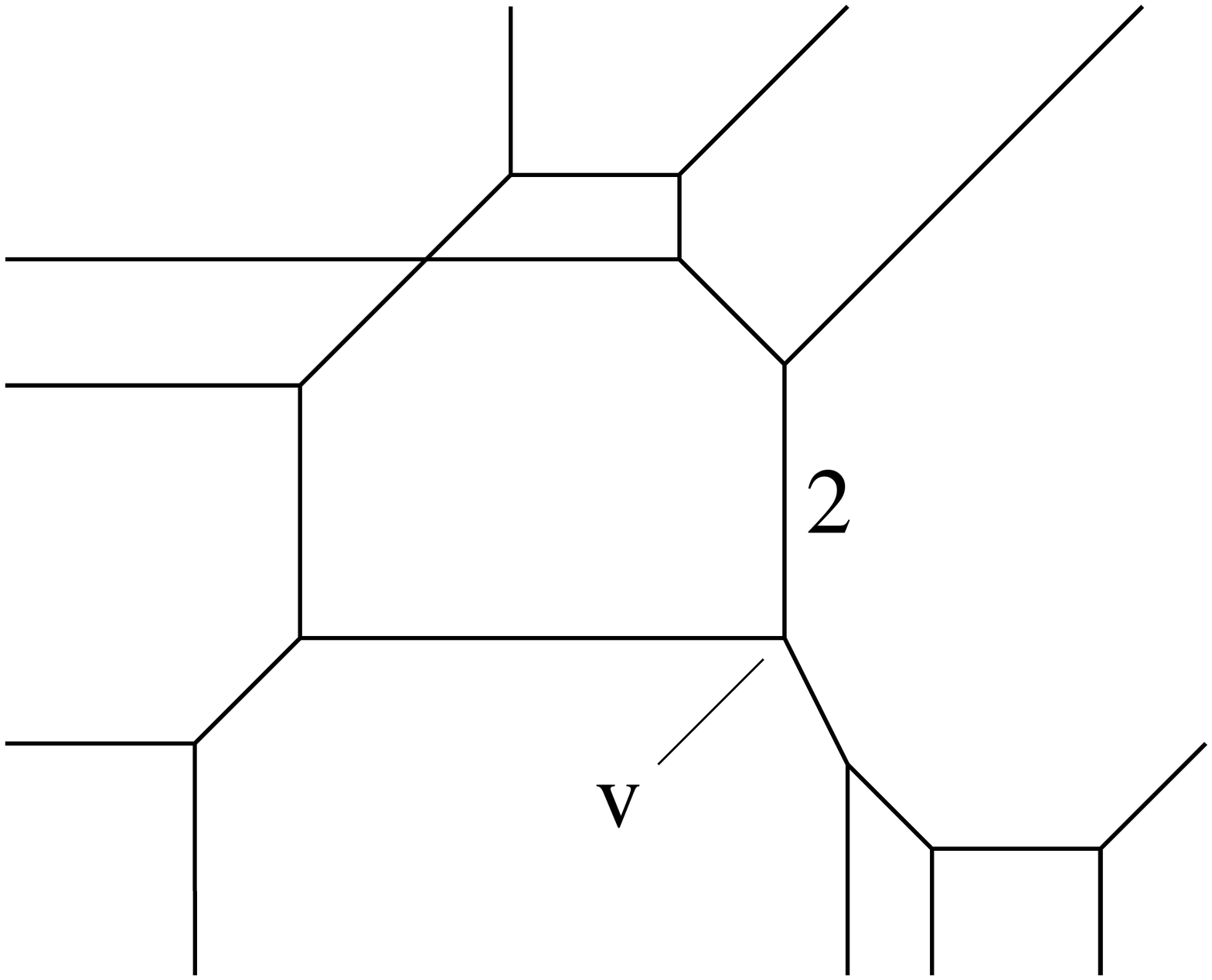}  \quad \quad \quad
&
\quad \quad \quad
\raisebox{25pt}{
\includegraphics[scale = 0.20]{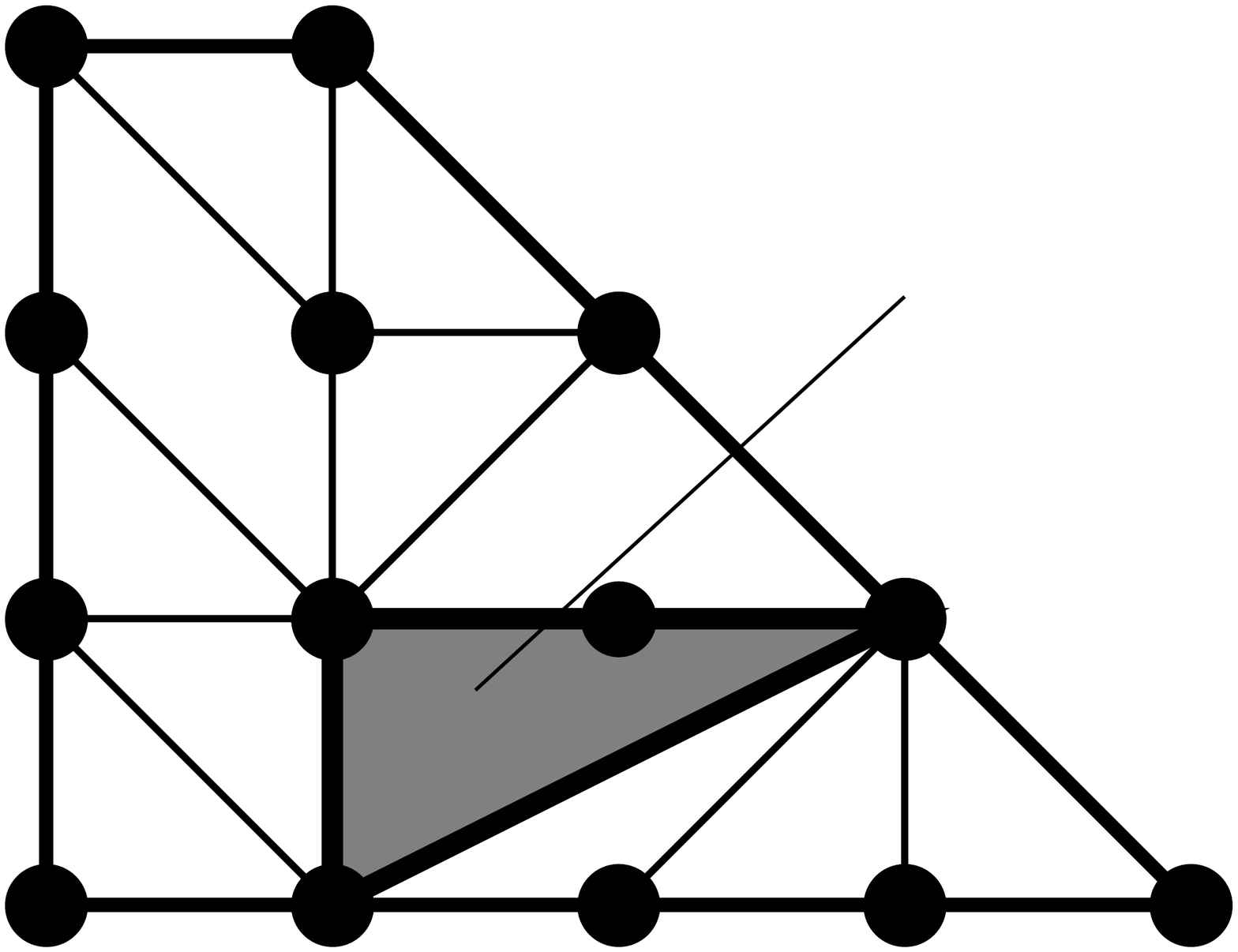}}
\begin{picture}(0,0)
\put(-35,120){$\Delta_v$}
\end{picture} \\
 \end{tabular}
\vspace{-2.5em}
\caption{The dual subdivision of the curve of
  Figure~\ref{ex:tropicalCurve}. The triangle $\Delta_v$ is dual to
  the vertex $v$.}
\label{fig:dualSubdivision}
\end{figure}

In~\cite{Mi05}, Mikhalkin assigns to a $3$-valent vertex $v$ of
a simple tropical curve
$(C, h)$ the \emph{(Mikhalkin) vertex multiplicity}
\begin{equation}
\label{eqn:MikhalkinvertexMultiplicity}
\mult_\CC(v) = \Area(\Delta_v).
\end{equation}
To the tropical curve $(C, h)$, he assigns the \emph{(Mikhalkin)
  multiplicity} 
\begin{equation}
\label{eqn:MikhalkinMultiplicity}
\mult_\CC(C, h) = \prod_v \mult_\CC(v) = \prod_v  \Area(\Delta_v),
\end{equation}
the product running over the $3$-valent vertices $v$ of $(C, h)$ and
$\Delta_v$ is the triangle in the subdivision $\Delta_C$ dual to
$v$ (cf.,
Definition~\ref{def:tropicalCurveProperties} and Figure~\ref{fig:dualSubdivision}).
If $v$ has adjacent edges $e_1$,$e_2$, and $e_3$, then the vertex
multiplicity
$\mult_\CC(v)$ equals the Euclidian area of the parallelogram
spanned by any two of the direction vectors starting at $v$.

\vspace{0.6em}
\noindent {\bf Example~\ref{ex:tropicalCurve} (cont'd).}
The dual subdivision of the tropical curve of
Example~\ref{ex:tropicalCurve} consists of $2$ triangles of
(normalized) area $2$ and $9$ triangles of area $1$. The Mikhalkin
multiplicity is thus $\mult_\CC(C) = 2^2 \cdot 1^9 = 4$.
(The quadrangle does not contribute to $\mult_\CC(C)$.)
\vspace{0.6em}

We associate to a tropical curve $(C, h)$ a refined
weight. Recall that, for an integer $n$, we denote by
\[
 [n]_y=\frac{y^{n/2}-y^{-n/2}}{y^{1/2}-y^{-1/2}}=y^{(n-1)/2}+\cdots +y^{-(n-1)/2}
\]
the \emph{quantum number} of $n$. In
particular, $[n]_1 = n$. We can think about $[n]_y$ as a (shifted)
$q$-analog of $n$.

\begin{definition}
\label{def:refinedMultiplicityTropicalCurve}
The \emph{refined vertex multiplicity} of a $3$-valent vertex $v$ of a simple
tropical curve $(C, h)$ is
\begin{equation}
\label{eqn:refinedMultiplicityVertex}
\mult(v;y) = [\Area(\Delta_v)]_y.
\end{equation}
The \emph{refined multiplicity} of a simple
tropical curve $(C, h)$ is
\begin{equation}
\label{eqn:refinedMultiplicity}
\mult(C, h;y) = \prod_v  [\Area(\Delta_v)]_y,
\end{equation}
the product running over the $3$-valent vertices of $(C, h)$.
\end{definition}

\vspace{0.6em}
\noindent {\bf Example~\ref{ex:tropicalCurve} (cont'd).}
The refined multiplicity of vertex $v$ of the tropical curve of
Example~\ref{ex:tropicalCurve} is $[\Area(\Delta_v)]_y = [2]_y =
y^{1/2} + y^{-1/2}$. As the dual subdivision consists of $2$ triangles
of area $2$ and $9$ triangles of area $1$, the refined multiplicity of
$(C, h)$ is
\[
\mult(C, h;y) = (y^{1/2} + y^{-1/2})^2 \cdot 1^9 = y+2 + y^{-1}.
\]
(Again, the quadrangle does not contribute.)
\vspace{0.6em}

We now define the tropical refinement of Severi degrees. For smooth
toric surfaces, these
invariants conjecturally agree with the refined invariants $\widetilde
N^{X(\Delta),L(\Delta),\delta}(y)$, provided $L(\Delta)$ is sufficiently ample,
see Conjecture~\ref{conj:smoothToric}.

As with classical curve counting, we require the configuration of
tropical points to be in \emph{tropically generic position}; the
precise definition is given in \cite[Definition~4.7]{Mi05}. Roughly,
tropically generic means there are no tropical curves of unexpectedly small degree passing through
the points.  By \cite[Proposition~4.11]{Mi05}, the
set of such points configurations is open and dense in the space of point
configurations in $\RR^2$. An important example of a tropically generic point configuration is
the following. The combinatorics of tropical curves passing through
such configurations is essentially given by the floor diagrams of Section~\ref{sec:FD}.

\begin{definition}[\cite{Brugalle_personal}]
\label{def:verticallyStretched}
Let $\Delta$ be a lattice polygon. A point configuration $\Pi=\{(x_1,
y_1), \dots, (x_N, y_N)\}$ in $\RR^2$ is called \emph{vertically
  stretched with respect to $\Delta$} if, for every tropical curve
$C$ of degree $\Delta$, we have
\begin{equation}
\label{eqn:vertically_stretched}
\begin{split}
\min_{i \neq j} |y_i-y_j| >& \max_{i \neq j} |x_i-x_j| \cdot |\text{maximal slope of
an edge of }C | \\
& \hspace{9em}\cdot \text{(number of edges of $C$)}.
\end{split}
\end{equation}
\end{definition}

The notion of a vertically stretched point configuration for a fixed
polygon $\Delta$ is well-defined, as (\ref{eqn:vertically_stretched})
depends only on $\Pi$ and the finitely many combinatorial types of
tropical curves of degree $\Delta$. Our definition of a vertically
stretched point configuration is slightly more restricted than in
\cite[Section~5]{BM2} but has the advantage of being explicit. It is
sufficient for the floor decomposition techniques of tropical
curves~\cite{Brugalle_personal}.

\begin{definition}
Fix a lattice polygon $\Delta$ and $\delta \ge 0$.
\begin{enumerate}
\item The \emph{(tropical) refined
  Severi degree} $N^{\Delta, \delta}(y)$ of the pair $(X(\Delta),
L(\Delta))$ is
\begin{equation}
\label{eqn:RefinedSeveriSum}
N^{\Delta, \delta}(y) := \sum_{(C, h)} \mult(C, h; y),
\end{equation}
where the sum is over all $\delta$-nodal tropical curves $(C, h)$
of degree $\Delta$
passing through $|\Delta \cap \ZZ^2| - 1 - \delta$ tropically generic points.
\item The \emph{(tropical)
irreducible refined Severi degree} of $(X(\Delta),
  L(\Delta))$ is
\begin{equation}
\label{eqn:IrreducibleRefinedSeveriSum}
N_{0}^{\Delta,\delta}(y) := \sum_{(C, h)} \mult(C, h; y),
\end{equation}
the sum ranging over all irreducible tropical curves of
degree $\Delta$ with $\delta$ nodes
passing through
$|\Delta \cap \ZZ^2| - 1 - \delta$ tropically generic points.
\end{enumerate}
\end{definition}

By Theorem~\ref{thm:irreducible_refined_Severi_degree}, the tropical
irreducible refined Severi degree agrees with its non-tropical version
defined in (\ref{eqn:irred_refined_Severi_definition}) for $\PP^2$,
Hirzebruch surfaces and rational ruled surfaces.
Note that a tropical curve through generic
points is, by definition, necessarily simple. Itenberg and Mikhalkin
showed that both refined tropical enumerations give indeed invariants.

\begin{theorem}[{\cite[Theorem~1]{IM12}}]
\label{thm:refinedSeveriIndep}
The sum
(\ref{eqn:IrreducibleRefinedSeveriSum}), and thus
$N_0^{\Delta,\delta}(y)$,  are independent of the tropical point
configuration, as long as the configuration is generic.
\end{theorem}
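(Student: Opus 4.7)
The plan is to mimic Mikhalkin's original invariance argument for the classical Severi degree $N^{\Delta,\delta} = N^{\Delta,\delta}(1)$, replacing the ordinary multiplicity $\mult_\CC(C,h)$ by the refined multiplicity $\mult(C,h;y)$, and verify that the local wall-crossing analysis survives the passage to quantum numbers. Set $N = |\Delta\cap\ZZ^2|-1-\delta$ and consider the configuration space $(\RR^2)^N$. The locus of tropically non-generic configurations, in the sense of \cite[Def.~4.7]{Mi05}, is a finite union of codimension-$\ge 1$ polyhedra. On each connected component of its complement $U$, the set of $\delta$-nodal simple tropical curves of degree $\Delta$ through the configuration has a locally constant combinatorial type (same underlying graph, same primitive directions, same dual subdivision), and hence $\mult(C,h;y)$ is constant on each such curve; in particular the sum \eqref{eqn:IrreducibleRefinedSeveriSum} is locally constant on $U$. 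Since $U$ is dense and connected-through-codimension-one-walls, global invariance reduces to showing that crossing any such wall preserves the refined count.

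Next I would classify the codimension-$1$ walls, following \cite{Mi05} and \cite{IM12}. The essential new case is: (a) two adjacent $3$-valent vertices of some curve merge, producing momentarily a $4$-valent vertex with outgoing primitive direction vectors $u_1,\dots,u_4$ of weights $\omega_1,\dots,\omega_4$ with $\sum\omega_i u_i=0$. The remaining walls—(b) a marked point passing through a vertex, and (c) a marked point meeting two non-adjacent edges—preserve the dual subdivision of each involved curve, so the refined multiplicity of every surviving curve is unchanged and the argument of \cite{Mi05} applies verbatim. For (a), on each side of the wall the $4$-valent vertex resolves into two $3$-valent vertices joined by a short bounded edge, corresponding to one of the three pair-partitions of $\{u_1,\dots,u_4\}$. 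Balancing forces $\omega_1\omega_2|u_1\wedge u_2|=\omega_3\omega_4|u_3\wedge u_4|$, etc., so the three resolutions contribute refined vertex factors $[a]_y^2,\ [b]_y^2,\ [c]_y^2$ respectively, where $a,b,c$ denote these common absolute determinants; balancing further implies one of $a,b,c$ is the sum of the other two, say $c = a+b$.

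The key algebraic input is the quantum-number identity
\[
[a]_y^2 - [b]_y^2 = [a+b]_y\,[a-b]_y,
\]
which follows by direct expansion of $[n]_y = (y^{n/2}-y^{-n/2})/(y^{1/2}-y^{-1/2})$, since both sides equal $(y^a + y^{-a} - y^b - y^{-b})/(y^{1/2}-y^{-1/2})^2$. Combining this with the combinatorial fact that exactly two of the three possible resolutions appear on either side of the wall, and that their multiplicities in curve counts are related by the standard local bookkeeping at a $4$-valent vertex, yields the desired equality of refined counts across walls of type (a). The main obstacle is this last step: one must verify with care that the global combinatorial weight of each resolution (the number of curves of that combinatorial type meeting the configuration) matches the prediction of the quantum identity, and that the rest of the curve—its unchanged vertices and edges, and in particular the connectedness of $C$—is preserved, so that the argument applies to the irreducible count $N_0^{\Delta,\delta}(y)$ as well as to $N^{\Delta,\delta}(y)$. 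Once (a) is settled, specializing $y=1$ recovers Mikhalkin's classical invariance, giving a consistency check.
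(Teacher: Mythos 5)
There is no proof of this statement in the paper to compare against: Theorem~\ref{thm:refinedSeveriIndep} is quoted from Itenberg--Mikhalkin \cite{IM12}, and when the paper later needs the analogous invariance in the relative setting (Theorem~\ref{thm:refinedRelativeSeveriIndep}) it again delegates the decisive local computation to \cite[Lemma~3.3]{IM12}. So your proposal is in effect a reconstruction of the \cite{IM12} argument, and its skeleton --- wall-crossing in the configuration space, with the essential codimension-one wall being a curve acquiring a single $4$-valent vertex, settled by an identity among quantum numbers --- is indeed the right one.

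However, the local model you set up at such a wall, which is the entire mathematical content of the theorem, is incorrect, and the step that would close the argument is left unproved. Balancing gives $v_1+v_2=-(v_3+v_4)$ for the four edge vectors $v_i=\omega_iu_i$, but it does \emph{not} force $|v_1\wedge v_2|=|v_3\wedge v_4|$: for $v_1=(3,-1)$, $v_2=(-1,2)$, $v_3=(-1,0)$, $v_4=(-1,-1)$ one has $|v_1\wedge v_2|=5$ while $|v_3\wedge v_4|=1$. Hence the three resolutions of the $4$-valent vertex do not contribute squares $[a]_y^2,[b]_y^2,[c]_y^2$; they contribute products of generally \emph{distinct} quantum numbers (in this example $[5]_y[1]_y$, $[2]_y[4]_y$ and $[1]_y[3]_y$), so your ``key algebraic input'' $[a]_y^2-[b]_y^2=[a+b]_y[a-b]_y$ is true but not the identity that is needed (and even in your model the wall-crossing would require $[a+b]_y^2=[a]_y^2+[b]_y^2$, which is false). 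What actually makes the count invariant is a quantum Pl\"ucker-type relation: writing $A=v_1\wedge v_2$, $B=v_2\wedge v_3$, $E=v_1\wedge v_3$, balancing gives $v_3\wedge v_4=B+E$, $v_4\wedge v_1=A+E$, $v_2\wedge v_4=A-B$, and the identity $[x+u]_y[x+v]_y-[x]_y[x+u+v]_y=[u]_y[v]_y$ yields $[A]_y[B+E]_y=[B]_y[A+E]_y+[E]_y[A-B]_y$; in the example above this reads $[5]_y[1]_y+[1]_y[3]_y=[2]_y[4]_y$. Moreover you explicitly defer (``the main obstacle is this last step'') the matching of which resolutions occur, and how many times, on each side of the wall, together with the degenerate configurations (parallelogram quadrilaterals, parallel edge directions, a marked point hitting a vertex) and the preservation of connectedness needed for $N_0^{\Delta,\delta}(y)$. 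That matching plus the identity above is precisely \cite[Lemma~3.3]{IM12} and is where all the work lies; with the incorrect local multiplicities and this step missing, the proposal does not amount to a proof.
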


\begin{corollary}
The sum in (\ref{eqn:RefinedSeveriSum}), and thus $N^{\Delta,
  \delta}(y)$, are independent of the tropical point configuration, as
long as the configuration is generic.
\end{corollary}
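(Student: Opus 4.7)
The plan is to reduce the corollary to Theorem~\ref{thm:refinedSeveriIndep} by decomposing reducible tropical curves into their irreducible components. Fix a generic point configuration $\Pi$ with $|\Pi|=|\Delta\cap\ZZ^2|-1-\delta$. Any $\delta$-nodal simple tropical curve $(C,h)$ of degree $\Delta$ through $\Pi$ decomposes uniquely into irreducible components $(C_i,h_i)$, $i=1,\dots,t$, of degrees $\Delta_i$ with $\Delta_1+\cdots+\Delta_t=\Delta$ and node counts $\delta_i$ satisfying
\[
\delta=\sum_{i=1}^{t}\delta_i+\sum_{i<j}\M(\Delta_i,\Delta_j),
\]
by Definition~\ref{def:tropicalCurveProperties}(\ref{itm:numberOfNodesRed}). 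Since $(C,h)$ is simple, every $3$-valent vertex of $C$ lies on exactly one component, and since self-intersections of $h$ are dual to parallelograms (which do not contribute to the refined multiplicity), we obtain
\[
\mult(C,h;y)=\prod_{i=1}^{t}\mult(C_i,h_i;y).
\]

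The next step is to match point conditions. By a dimension count, genericity of $\Pi$ forces each component $(C_i,h_i)$ to pass through exactly $|\Delta_i\cap\ZZ^2|-1-\delta_i$ points of $\Pi$, and these subsets $\Pi_i\subseteq\Pi$ form an ordered partition of $\Pi$; a short computation using Pick's theorem and the nodal identity above confirms that the sizes sum to $|\Pi|$. Moreover, each $\Pi_i$ inherits tropical genericity with respect to $\Delta_i$, because the genericity condition of \cite[Definition~4.7]{Mi05} for $\Pi$ and $\Delta$ implies it for any sub-configuration and any Newton polygon arising in a Minkowski decomposition.

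Assembling these facts yields an expression of the form
\[
N^{\Delta,\delta}(y,\Pi)=\sum_{\{(\Delta_i,\delta_i)\}}c\bigl(\{(\Delta_i,\delta_i)\}\bigr)\prod_{i=1}^{t}N_0^{\Delta_i,\delta_i}(y),
\]
where the outer sum ranges over unordered tuples $\{(\Delta_i,\delta_i)\}_{i=1}^{t}$ with $\sum_i\Delta_i=\Delta$ and the above nodal constraint, and $c(\cdot)$ is a purely combinatorial factor (the relevant multinomial coefficient divided by the automorphism order of the tuple) depending only on $|\Pi|$ and on the tuple. By Theorem~\ref{thm:refinedSeveriIndep} each factor $N_0^{\Delta_i,\delta_i}(y)$ is independent of the chosen $\Pi_i$, and the combinatorial factor is manifestly independent of $\Pi$. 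Hence $N^{\Delta,\delta}(y,\Pi)$ depends only on $|\Pi|$, proving the corollary.

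The main obstacle is the two genericity assertions: that every irreducible component receives exactly the expected number of point constraints, and that each sub-configuration $\Pi_i$ is tropically generic for $\Delta_i$. Both are standard consequences of \cite[Definition~4.7, Proposition~4.11]{Mi05}, but warrant verification since Theorem~\ref{thm:refinedSeveriIndep} is only available for generic configurations. Once granted, the result is a purely formal consequence of the multiplicativity of the refined multiplicity and the invariance of the irreducible refined Severi degrees.
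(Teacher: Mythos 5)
Your proposal is correct and follows essentially the same route as the paper: the paper also expresses $N^{\Delta,\delta}(y)$ via the decomposition formula \eqref{eqn:refined_in_terms_of_irreducible_refined} as a sum over partitions of $\Pi$ and tuples $(\Delta_i,\delta_i)$ of products of irreducible refined Severi degrees, and then invokes Theorem~\ref{thm:refinedSeveriIndep}. The only difference is that you spell out the genericity and dimension-count justifications for that formula, which the paper simply cites from \cite[Section~2.3]{AB10}.
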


\begin{proof}
The refined Severi degree can be expressed in terms of the irreducible refined
Severi degrees, which are, by
Theorem~\ref{thm:refinedSeveriIndep}, independent of the specific
location of the points.

Specifically, let $\Pi \subset \RR^2$ be a
tropically generic set of $|\Delta \cap \ZZ^2| - 1 - \delta$
points. Then (see also \cite[Section~2.3]{AB10})
\begin{equation}
\label{eqn:refined_in_terms_of_irreducible_refined}
N^{\Delta, \delta}(y) = \sum_{\Pi = \cup \Pi_i} \sum_{(\Delta_i,
  \delta_i)} \prod_i N^{\Delta_i, \delta_i}_0(y),
\end{equation}
where the first sum is over all partitions of $\Pi$, and the second
sum is over all pairs $(\Delta_i, \delta_i)$ which satisfy
\begin{equation}
\label{eqn:irreducible_refined_Severi_degree_conditions}
\begin{split}
|\Pi_i| =& |\Delta_i \cap \ZZ^2| - 1 - \delta_i, \quad \text{for all }
1 \le i \le t, \\
\Delta = &\Delta_1 + \cdots + \Delta_t  \quad \text{(Minkowski
  sum)},\\
\delta = & \sum_{i=1}^t \delta_i + \sum_{1 \le i < j \le t}
\M(\Delta_i, \Delta_j).
\end{split}
\end{equation}
Here, again $\M(\Delta_i, \Delta_j) = \tfrac{1}{2}(\Area(\Delta_i +
\Delta_j)-\Area(\Delta_i)-\Area(\Delta_j))$ is the mixed area of
the polygons $\Delta_i$ and $\Delta_j$.
\end{proof}

At $y = 1$, we recover Mikhalkin's (Complex) Correspondence Theorem.

\begin{theorem}[{Mikhalkin's (Complex) Correspondence Theorem
    \cite[Theorem~1]{Mi05}}]
\label{thm:MikhalkinCorrespondenceTheorem}
For any lattice polygon $\Delta$:
\begin{enumerate}
\item the (tropical) Severi degree $N^{\Delta, \delta}(1)$ equals the
  (classical) Severi degree $N^{\Delta, \delta}$, and
\item the (tropical) irreducible Severi degree $N_0^{\Delta,
    \delta}(1)$ equals the irreducible
  (classical) Severi degree $N_0^{\Delta, \delta}$.
\end{enumerate}
\end{theorem}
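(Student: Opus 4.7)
The plan is to reduce the statement directly to Mikhalkin's original (Complex) Correspondence Theorem \cite[Theorem~1]{Mi05} by specializing the refined multiplicity at $y=1$. The key observation is that the quantum number satisfies $[n]_1 = n$ (as is evident from the expansion $[n]_y = y^{(n-1)/2} + \cdots + y^{-(n-1)/2}$ in \eqref{quantum}), so the refined vertex multiplicity specializes to
\[
\mult(v;1) = [\Area(\Delta_v)]_1 = \Area(\Delta_v) = \mult_\CC(v),
\]
which is precisely Mikhalkin's vertex multiplicity from \eqref{eqn:MikhalkinvertexMultiplicity}. Taking the product over all $3$-valent vertices of a simple tropical curve $(C,h)$ then gives $\mult(C,h;1) = \mult_\CC(C,h)$, matching \eqref{eqn:MikhalkinMultiplicity}.

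For part (1), I substitute $y=1$ into the defining sum \eqref{eqn:RefinedSeveriSum}. Since the set of $\delta$-nodal tropical curves of degree $\Delta$ passing through a fixed tropically generic point configuration of size $|\Delta\cap\ZZ^2|-1-\delta$ does not depend on $y$, the substitution commutes with the summation, yielding
\[
N^{\Delta,\delta}(1) = \sum_{(C,h)} \mult(C,h;1) = \sum_{(C,h)} \mult_\CC(C,h).
\]
The right-hand side is exactly the tropical count appearing in Mikhalkin's Correspondence Theorem, which equals the classical Severi degree $N^{\Delta,\delta}$.

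For part (2), the same specialization applies, but now the sum is restricted to irreducible tropical curves. Mikhalkin's theorem also provides a correspondence in the irreducible case: the weighted count of irreducible simple tropical curves of genus $g = |\Delta^0 \cap \ZZ^2| - \delta$ and degree $\Delta$ through a generic configuration equals the classical irreducible Severi degree $N_0^{\Delta,\delta}$. Since our definition of $\delta(C,h)$ for irreducible curves in Definition~\ref{def:tropicalCurveProperties}(\ref{itm:numberOfNodesIrred}) is precisely $|\Delta^0\cap\ZZ^2|-g(C,h)$, the two indexing conventions agree, and the specialized sum \eqref{eqn:IrreducibleRefinedSeveriSum} at $y=1$ coincides with Mikhalkin's irreducible tropical count.

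There is no real obstacle here: the proof is a one-line specialization plus invocation of \cite[Theorem~1]{Mi05}. The only point worth checking carefully is the bookkeeping of $\delta$ versus genus in the irreducible case (to ensure our $\delta(C,h)$ matches Mikhalkin's enumeration parameter) and that parallelograms in the dual subdivision contribute trivially to both $\mult(C,h;y)$ and $\mult_\CC(C,h)$ (they do, as neither product ranges over quadrangle-dual vertices of valence greater than three; in the simple case only triangles dual to $3$-valent vertices appear in the product). With these conventions aligned, the identification is immediate.
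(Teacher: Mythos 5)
Your proposal is correct and matches the paper's treatment: the paper states this result as an immediate consequence of \cite[Theorem~1]{Mi05}, the only content being exactly the observation you make, namely that $[n]_1=n$ forces $\mult(C,h;1)=\mult_\CC(C,h)$, so the sums \eqref{eqn:RefinedSeveriSum} and \eqref{eqn:IrreducibleRefinedSeveriSum} at $y=1$ become Mikhalkin's tropical counts. Your extra bookkeeping checks (the $\delta$ versus genus convention and the irrelevance of parallelograms, which are dual to crossing points rather than trivalent vertices) are consistent with the paper's conventions and add nothing that would change the argument.
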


At $y = -1$, we recover Mikhalkin's Real Correspondence Theorem. The
\emph{classical Welschinger invariant} $W^{\Delta,
    \delta}(\Pi)$ and the \emph{irreducible classical Welschinger invariant} $W_0^{\Delta,
    \delta}(\Pi)$ count real curves resp.\ irreducible real curves of
  degree $\Delta$ with $\delta$ nodes
  through the real point configuration $\Pi$, counted with Welschinger
  sign. In positive genus, unlike for Severi degrees, both invariants depend on the point
  configuration $\Pi$, even for generic $\Pi$. For details see
  \cite[Section 7.3]{Mi05}.

\begin{theorem}[{Mikhalkin's Real Correspondence Theorem
    \cite[Theorem~6]{Mi05}}]
\label{thm:MikhalkinRealCorrespondenceTheorem}
For any lattice polygon $\Delta$:
\begin{enumerate}
\item the (tropical) Welschinger invariant $W_{\trop}^{\Delta,
    \delta}$ equals the (classical) Welschinger invariant $W^{\Delta,
    \delta}(\Pi)$ for some real point configuration $\Pi$, and
\item the irreducible (tropical) Welschinger invariant $W_{0, \trop}^{\Delta,
    \delta}$ equals the irreducible (classical) Welschinger invariant $W_0^{\Delta,
    \delta}(\Pi)$ for some real point configuration $\Pi$.
\end{enumerate}
\end{theorem}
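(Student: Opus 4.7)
The plan is to deduce both assertions from Mikhalkin's original Real Correspondence Theorem \cite[Theorem~6]{Mi05}, after verifying that the specialization at $y=-1$ of the refined tropical multiplicity $\mult(C,h;y)$ coincides termwise with the real (Welschinger) tropical multiplicity used there.

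First I would compute $[n]_{-1}$ explicitly. Writing $[n]_y = y^{(n-1)/2}+y^{(n-3)/2}+\cdots+y^{-(n-1)/2}$ and substituting $y=-1$, the alternating sum telescopes to $[n]_{-1}=0$ if $n$ is even and $[n]_{-1}=(-1)^{(n-1)/2}$ if $n$ is odd. Hence $\mult(C,h;-1)=\prod_v [\Area(\Delta_v)]_{-1}$ vanishes unless every triangle of the dual subdivision $\Delta_C$ has odd area, in which case it equals $\prod_v (-1)^{(\Area(\Delta_v)-1)/2}$.

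Second, I would match this against Mikhalkin's real vertex weight from \cite[\S7]{Mi05}, which is $0$ when $\Area(\Delta_v)$ is even and $(-1)^{I^+(\Delta_v)}$ otherwise, where $I^+(\Delta_v)$ counts interior integer points of $\Delta_v$ of a prescribed parity class. An elementary argument via Pick's formula, combined with a short case analysis on the parities of the edge lattice lengths of a dual triangle, yields $(-1)^{I^+(\Delta_v)}=(-1)^{(\Area(\Delta_v)-1)/2}$ whenever the area is odd. This gives $\mult(C,h;-1)$ termwise equal to Mikhalkin's real multiplicity, so $N^{\Delta,\delta}(-1)$ realizes his tropical Welschinger count and similarly $N_0^{\Delta,\delta}(-1)$ realizes its irreducible analogue.

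With this identification established, both (1) and (2) follow directly from \cite[Theorem~6]{Mi05}: Mikhalkin's patchworking (Viro) construction produces, from each simple tropical curve weighted by its real multiplicity, a real algebraic curve of degree $\Delta$ with $\delta$ real nodes through an appropriate vertically stretched real configuration $\Pi\subset(\RR^*)^2$, with Welschinger sign equal to the product of vertex signs. Restricting the correspondence to connected tropical curves yields (2). I expect the main obstacle, should one want a fully self-contained proof, to be redoing the real patchworking analysis itself, which is the technical core of \cite[\S7]{Mi05}; the only genuinely new content here is the parity identification in the previous paragraph.
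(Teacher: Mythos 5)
First, note that the paper does not prove this statement at all: it is quoted verbatim as Mikhalkin's Theorem~6 of \cite{Mi05} (together with the references given for the definition of $W^{\Delta,\delta}(\Pi)$ and $W_\trop^{\Delta,\delta}$), so the literal theorem needs only the citation you give in your last paragraph. The extra content you supply is the identification of $\mult(C,h;-1)$ with Mikhalkin's real multiplicity, i.e.\ the claim $N^{\Delta,\delta}_{\trop}(-1)=W_\trop^{\Delta,\delta}$, and it is there that your argument has a genuine gap. Your computation $[n]_{-1}=0$ for $n$ even and $[n]_{-1}=(-1)^{(n-1)/2}$ for $n$ odd is correct, but the asserted \emph{per-vertex} identity between $(-1)^{(\Area(\Delta_v)-1)/2}$ and Mikhalkin's real vertex weight is false, and no ``case analysis on the parities of the edge lattice lengths'' can repair it. Mikhalkin's real weight of a vertex of odd multiplicity is $(-1)^{i(\Delta_v)}$ with $i(\Delta_v)$ the number of interior lattice points of the dual triangle; for $\Delta_v=\conv\{(0,0),(3,0),(0,1)\}$ (edge weights $3,1,1$, $\Area=3$, $i=0$) this weight is $+1$, while $[3]_{-1}=-1$. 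Pick's formula makes the discrepancy explicit: $\Area=2i+b-2$ gives $(-1)^{(\Area-1)/2}=(-1)^{i}\,(-1)^{(b-3)/2}$, and $(b-3)/2=\sum_{e\ni v}(\wt(e)-1)/2$ need not be even, even though odd area forces all three weights to be odd. Since such triangles do occur as dual triangles of vertices of curves in the count (e.g.\ at the two endpoints of a bounded weight-$3$ elevator edge), no per-vertex sign identification of the kind you propose can hold, whatever parity class of interior points you prescribe (the triangle above has no interior points at all, yet $(\Area-1)/2$ is odd).

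What is true, and what you actually need, is the identification \emph{curve by curve}: multiplying the Pick discrepancies over all vertices gives $\prod_v(-1)^{(b_v-3)/2}=(-1)^{\sum_e (\wt(e)-1)/2\cdot\#\{\text{endpoints of }e\}}$, so each bounded edge contributes $(\wt(e)-1)\equiv 0\pmod 2$ and the total discrepancy is $(-1)^{\sum_{\text{ends}}(\wt(e)-1)/2}$. One must then observe that a simple tropical curve passing through the generic number $|\Delta\cap\ZZ^2|-1-\delta$ of points has all unbounded edges of weight $1$ (a dimension count: a type with $x$ ends and genus $g$ moves in an $(x+g-1)$-dimensional family, and $x$ equals the boundary lattice length only when every end has weight $1$), whence the discrepancy is trivial and $\mult(C,h;-1)$ equals Mikhalkin's real multiplicity for every curve in the sum. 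With that global argument (or by instead citing the invariance results of \cite{IM12}, or the specialization of the refined recursion at $y=-1$ used elsewhere in the paper) your reduction to \cite[Theorem~6]{Mi05} goes through; as written, the key parity step is incorrect.
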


\begin{remark}
The refined Severi degrees $N^{\Delta, \delta}(y)$ thus interpolate
between Severi degrees and Welschinger invariants. Similarly, the
refined irreducible Severi degrees $N_0^{\Delta, \delta}(y)$
interpolate between irreducible (classical) Severi degrees and
irreducible (classical) Welschinger invariants. 
\end{remark}

\section{Properties of refined Severi degrees}
\label{sec:properties}

In this section, we show a few properties of refined Severi
degrees. Specifically, we discuss the polynomiality of refined Severi
degrees in the parameters of $\Delta$ in
Section~\ref{sec:Refined_node_polynomials}, conjecture the polynomiality of their
coefficients (as Laurent polynomials in $y$) in
Section~\ref{sec:coefficient_polynomiality}, discuss
implications for the conjectures of G\"ottsche and Shende in
Section~\ref{sec:evidence_for_GS_conjectures}, and irreducible refined
Severi degrees in Section~\ref{sec:relation_irreducible_reducible}.

\subsection{Refined node polynomials}
\label{sec:Refined_node_polynomials}

We will now prove Conjecture~\ref{conj:smoothToric} for the projective
plane $\PP^2$ and $\delta
\le 10$, for $\PP^1\times \PP^1$ for $\delta\le 6$  and for all Hirzebruch surfaces $\F_m$ for $\delta \le 2$ 
and $\PP(1,1,m)$ for $\delta \le 2$.

First we state the existence of refined node polynomials
$N_\delta(d;y)$, $N_\delta(c,d,m;y)$, $N_\delta(d,m;y)$, refining some
results of \cite{FM} and \cite{AB10}. The proof of the following
theorem is in Section~\ref{sec:refinednodepolys}.

\begin{theorem}
\label{thm:refinedSeveridegreepoly}
For fixed $\delta \ge 1$:
\begin{enumerate}
\item ($\PP^2$) There is a polynomial $N_\delta(d; y) \in \QQ[y^{\pm
    1}][d]$  of degree $2\delta$ in $d$ such that, for $d \ge \delta$,
\[
N_\delta(d;y) = N^{d, \delta}(y).
\]

\item (Hirzebruch surface) There is a polynomial $N_\delta(c, d,m;y)
 \in \QQ[y^{\pm 1}][c,d,m]$ of degree $\delta$ in $c,m$ and $2\delta$
 in $d$ such that, for 
$c + m \ge 2 \delta$ and $d \ge \delta$
\[
N_\delta(c, d,m;y) = N^{(\F_m, cF+dH), \delta}(y).
\]

\item ($\PP(1, 1, m)$) There is a polynomial $N_\delta(d,m;y) \in
  \QQ[y^{\pm 1}][d,m]$ of degree $2\delta$ in $d$ and $\delta$ in $m$
  such that, for  $d \ge \delta$   and $m \ge 2 \delta$,
\[
N_\delta(d,m;y) = N^{\PP(1,1,m),dH), \delta}(y).
\]
\end{enumerate}
\end{theorem}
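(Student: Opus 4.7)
The plan is to adapt the Fomin--Mikhalkin polynomiality argument (and its extension by Ardila--Block to $h$-transverse polygons) to the refined setting, using the refined floor diagram formalism developed in Section~\ref{sec:FD}. By that section, the refined Severi degree can be written as a weighted sum
\[
N^{\Delta,\delta}(y) \;=\; \sum_{(\D,m)} \mu(\D;y),
\]
over marked floor diagrams $(\D,m)$ of degree $\Delta$ with $\delta$ ``defect,'' where $\mu(\D;y)$ is a product of quantum numbers $[k]_y$ coming from edge weights and vertex multiplicities. The point is that $\mu(\D;y)$ factors as a product of local contributions, and crucially, at $y=1$ we recover the integer multiplicities of Brugall\'e--Mikhalkin, so the combinatorial bookkeeping of the classical polynomiality proof carries over verbatim.

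First I would perform the \emph{template decomposition} of a floor diagram along its linear ``spine.'' For fixed $\delta$, only finitely many templates can occur (they have bounded length and bounded internal complexity controlled by $\delta$), and each template contributes a fixed Laurent polynomial in $y$ to $\mu(\D;y)$ that depends only on the template type, not on where it is placed. A floor diagram of degree $\Delta$ is then specified by choosing an ordered collection of templates, together with positions on the spine.

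Next, for each of the three cases, I would count the number of ways to place a given collection of templates, under the condition that the integer parameters ($d$ for $\PP^2$; $c,d,m$ for $\F_m$; $d,m$ for $\PP(1,1,m)$) are large enough that no two templates interact and none wrap around the boundary. This counting function is a polynomial in those integer parameters, of the stated degree (degree $2\delta$ in $d$ comes from two independent placement choices per floor plus length bookkeeping, degree $\delta$ in $c$ and $m$ come from the available ``vertical room''). Summing the fixed polynomial $y$-weights against these polynomial placement counts yields a polynomial in $\QQ[y^{\pm 1}][d]$, respectively $\QQ[y^{\pm 1}][c,d,m]$ and $\QQ[y^{\pm 1}][d,m]$, of the asserted degrees. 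The bounds $d\ge\delta$, $c+m\ge 2\delta$, etc.\ are exactly the thresholds ensuring that every template admits its full range of placements without collision or boundary truncation, so for parameters above these thresholds the formula gives $N^{\Delta,\delta}(y)$ on the nose.

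The main technical obstacle is verifying that the refined multiplicity factors cleanly through the template decomposition, i.e.\ that each template has a well-defined $y$-weight independent of its neighbors on the spine. This is where the quantum refinement could in principle interact with the combinatorics of adjacent templates, but since $[k]_y$ depends only on the edge weight $k$ (a local datum of the floor diagram) and $[\Area(\Delta_v)]_y$ depends only on the vertex-local subdivision, the factorization is in fact local, just as in the classical case. Once this locality is established, the rest of the argument is a direct parametric count, and the three cases differ only in which spine and which ``vertical'' parameters carry the polynomial dependence.
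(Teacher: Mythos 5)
Your proposal follows essentially the same route as the paper's proof: write the refined Severi degree via the refined floor-diagram formula, decompose into templates whose $y$-weights are fixed Laurent polynomials independent of position (locality of $[k]_y$), and then observe that summing the marking-count polynomials over the lattice points of the placement polytope $A_{\GGamma}(d)$ — whose combinatorial type is constant above the stated thresholds — gives a polynomial in $d$ (resp.\ $c,d,m$) of the asserted degrees. The only point you gloss is that for $\PP^2$ this argument naively gives the threshold $d \ge 2\delta$, and lowering it to $d \ge \delta$ requires the additional statistic $s(\Gamma)$ and the two marking lemmas of Block's earlier paper, which the paper imports verbatim into the refined setting.
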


We call the polynomials $N_\delta(d; y)$,
$N_\delta(c, d,m;y)$, and
$N_\delta(d,m;y)$ \emph{refined node polynomials}. 

\begin{remark}
Theorem~\ref{thm:refinedSeveridegreepoly} generalizes to
toric surfaces from ``$h$-transverse'' polygons with bounds exactly as
in Theorems~1.2 and~1.3 in~\cite{AB10}. The argument of \cite{AB10}
generalizes to the refined setting by replacing all (Mikhalkin)
weights by refined weights. As the argument is long and technical, we
do not reproduce it here and restrain ourselves to more manageable
cases.
\end{remark}

\begin{theorem}
\label{thm:refinedInvEqualsRefinedSeveri}
\begin{enumerate}
\item ($\PP^2$) For $\delta \le 10$ and $d\ge \delta/2+1$ we have
\[
\tN^{d, \delta}(y) = N_\delta(d) = N^{d, \delta}(y).
\]

\item For $\delta\le 6$ and $c,d\ge \delta/2$,
we have 
$$\tN^{(\PP^1\times \PP^1,cF+dH), \delta}(y) =N_\delta(c,d,0;y)=N^{(\PP_1\times\PP_1,cF+dH), \delta}(y).$$
\item (Hirzebruch surfaces) 
For $\delta \le 2$ and $d\ge 1$, $c\ge \delta$ we have
\[
\tN^{(\F_m,cF+dH), \delta}(y) =N_\delta(c,d,m;y)=N^{(\F_m,cF+dH), \delta}(y).
\]
\item ($\PP(1,1,m)$) For $\delta \le 2$ and $d \ge 2$ and $m\ge 1$ we have
\[
N_\delta(d,m;y)=N^{(\PP(1,1,m),d), \delta}(y).
\]
and $N_\delta(d,m;y)$ is given by \conjref{P11mconjser} and \remref{corseries}.
\end{enumerate}
\end{theorem}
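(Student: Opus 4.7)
The plan is to combine the asymptotic polynomiality of $N^{(S,L),\delta}(y)$ from Theorem~\ref{thm:refinedSeveridegreepoly} with the explicit polynomial description of $\widetilde N^{(S,L),\delta}(y)$ coming from Conjecture~\ref{Gconj}, which is proven modulo $q^{11}$ in \cite{GS12} and so determines the universal polynomial in $L^2,\ LK_S,\ K_S^2,\ \chi(\mathcal O_S)$ for $\delta\le 10$. In each of the four cases both sides are polynomials in the parameters of $(S,L)$ of a known bounded degree, so it suffices to verify agreement at sufficiently many values and then invoke polynomial interpolation.

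Concretely for case (1): specialize the universal polynomial at $\chi(L)=\binom{d+2}{2}$, $LK_S=-3$, $K_S^2=9$, $\chi(\mathcal O_S)=1$ to obtain a polynomial $\widetilde N_\delta(d;y)$ of degree $\le 2\delta$ in $d$. Then run the refined Caporaso--Harris recursion of Definition~\ref{refCHrec} to compute $N^{d,\delta}(y)$ for $\delta\le 10$ and a range of $d$ of size at least $2\delta+1$ inside $[\delta,30]$. By Theorem~\ref{thm:refinedSeveridegreepoly}(1) those computed values coincide with $N_\delta(d;y)$ on that range, and checking that they also match $\widetilde N_\delta(d;y)$ forces equality of the two polynomials of degree $\le 2\delta$. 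This gives $N_\delta(d;y)=\widetilde N^{d,\delta}(y)$ for all $d$, hence $N^{d,\delta}(y)=\widetilde N^{d,\delta}(y)$ for $d\ge\delta$. The residual range $\delta/2+1\le d<\delta$ consists of only finitely many pairs with $\delta\le 10$ and is handled by direct Caporaso--Harris computation of $N^{d,\delta}(y)$ and comparison with the explicit $\widetilde N_\delta(d;y)$.

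Cases (2) and (3) follow the same template with $\widetilde N_\delta(c,d,m;y)$ obtained from the universal formula as a polynomial of degree $2\delta$ in $d$ and $\delta$ in each of $c,m$: verify agreement on a sufficiently dense $(c,d,m)$-grid inside the asymptotic range $c+m\ge 2\delta,\ d\ge\delta$ where Theorem~\ref{thm:refinedSeveridegreepoly}(2) guarantees $N^{(\F_m,cF+dH),\delta}(y)=N_\delta(c,d,m;y)$, conclude equality of the two trivariate polynomials, and finish the finitely many parameter values permitted by the theorem but outside that range by direct Caporaso--Harris computation. For case (4) the surface $\PP(1,1,m)$ is singular, so there is no intrinsic $\widetilde N$; instead, verify that the polynomial $N_\delta(d,m;y)$ of Theorem~\ref{thm:refinedSeveridegreepoly}(3) matches the closed form predicted by Conjecture~\ref{P11mconjser} with the power series of Remark~\ref{corseries}. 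One uses the asymptotic range $d\ge\delta,\ m\ge 2\delta$ to pin the polynomial down by interpolation and then verifies the small values $m\in\{1,2,3\}$ directly for $\delta\le 2$.

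The main obstacle is computational rather than conceptual: one must execute the refined Caporaso--Harris recursion of Definition~\ref{refCHrec} at a depth sufficient to pin down the refined node polynomials, keeping track of the quantum weights $[i]_y$ and the binomial factors. The recursion branches over all admissible pairs $(\alpha',\beta')$ with $I\alpha'+I\beta'=H(L-H)$, so the number of intermediate refined relative Severi degrees grows rapidly with $d,\delta$ (and with $c,m$ in the Hirzebruch and weighted cases); the bookkeeping is substantial but mechanical. The conceptual input is already supplied by Theorem~\ref{thm:refinedSeveridegreepoly} together with the known universal polynomial form of $\widetilde N_\delta$ for $\delta\le 10$, which together reduce the theorem to a finite verification.
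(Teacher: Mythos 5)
Your proposal follows essentially the same route as the paper's own proof: both use the known universal polynomial form of $\tN^{(S,L),\delta}(y)$ for $\delta\le 10$ from \cite{GS12}, the existence and degree bounds of the refined node polynomials from Theorem~\ref{thm:refinedSeveridegreepoly}, finitely many evaluations of the refined Caporaso--Harris recursion of Definition~\ref{refCHrec}, and polynomial interpolation to conclude equality, with the residual small-parameter ranges checked by the same direct computations. No substantive difference in method or in the key inputs.
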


\begin{proof}
In \cite{GS12} we have computed $\widetilde N^{(S,L),\delta}(y)$ for all $(S,L)$ and all $\delta\le 10$. It is a polynomial of degree $\delta$ in the intersection numbers
$L^2$, $LK_S$, $K_S^2$ and $\chi(\O_S)$. 

(1) In the case $(S,L)=(\PP^2,\O(d))$ this gives that $\widetilde
N^{d,\delta}(y)$ as a polynomial of degree $2\delta$ in $d$.
Using the recursion \ref{refrec}
we compute $N^{d, \delta}(y)$ for all $\delta\le 10$ and all $d\le 30$. We find that $N^{d,\delta}(y)=\widetilde N^{d,\delta}(y)$ for $\delta\le 10$, and 
$\frac{\delta}{2}+1\le d\le 30$. 
We also know by \thmref{thm:refinedSeveridegreepoly} that $N_\delta(d)$ is a polynomial of degree $2\delta$ in $d$, and that $N_\delta(d)=N^{d,\delta}(y)$ for $d\ge \delta$.
Thus for $0\le \delta\le 10$ the two polynomials $N_\delta(d)$ and $\widetilde N^{d,\delta}(y)$ of degree $2\delta$ in $d$ have the same value for $\delta\le d\le 30$. 
Thus they are equal.

(2) Is very similar to (1). We compute $N^{(\PP_1\times\PP_1,cF+dH), \delta}(y)$ for $c\le 18$ and $d\le 12$ and $\delta\le 6$. 
We find that in this realm $N^{(\PP_1\times\PP_1,cF+dH), \delta}(y)=\tN^{(\PP^1\times \PP^1,cF+dH), \delta}(y)$ for $c,d\ge \delta/2$.
We know by \thmref{thm:refinedSeveridegreepoly}  and symmetry, that $N_\delta(c,d,0;y)$ is a polynomial of bidegree $(\delta,\delta)$ in 
$c,d$. Thus for $0\le \delta\le 6$, the two polynomials $N_\delta(c,d,0;y)$ and $\tN^{(\PP^1\times \PP^1,cF+dH), \delta}(y)$
have the same value, whenever $18\ge c\ge 2\delta$, $12\ge d\ge \delta$. Thus they are equal. 

(3) This case is again similar. We compute  $N^{(\F_m,cF+dH), \delta}(y)$ for $c\le 6$ and $d\le 6$, $m\le 4$ and $\delta\le 2$.
The claim follows in the same way as before.

(4) We compute $N^{(\PP(1,1,m)dH), \delta}(y)$ for  $d\le 6$, $m\le 6$ and $\delta\le 2$.
The claim follows in the same way as before.
\end{proof}

\begin{corollary}
The coefficients of the refined invariants $\tN^{(S, L), \delta}(y)$
are non-negative, i.e.,
\[
\tN^{(S, L), \delta}(y) \in \ZZ_{\ge 0}[y^{\pm 1}]
\]
provided either
\begin{itemize}
\item $S = \PP^2$, $L = dH$, $\delta \le 10$, and $d \ge
  \tfrac{d}{2} + 1$, or
\item $S=\PP^1\times\PP^1$, $L=cF+dH$, $\delta\le 6$, and $c,d\ge \delta/2$.
\item $S = \F_m$, $L = cF+dH$, $\delta \le 2$,  and $d\ge 1$, $c\ge \delta$.
\item $S = \PP(1,1,m)$, $L =dH$, $\delta \le 2$, and $d\ge 2$, $m\ge 1$.
\end{itemize}
\end{corollary}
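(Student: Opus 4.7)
The plan is to combine the identification in \thmref{thm:refinedInvEqualsRefinedSeveri} with the tropical definition of refined Severi degrees, which manifestly has non-negative integer coefficients. Under each of the four hypotheses listed, \thmref{thm:refinedInvEqualsRefinedSeveri} gives an equality
\[
\tN^{(S,L),\delta}(y) \;=\; N^{(S,L),\delta}(y),
\]
so it suffices to establish the non-negativity of the coefficients of $N^{(S,L),\delta}(y)$.

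Next, I would invoke \thmref{thm:satisfyrecursion} (which applies to all the surfaces $\PP^2$, $\Sigma_m$, and $\PP(1,1,m)$ appearing in the statement) to replace the recursively defined refined Severi degree by its tropical counterpart:
\[
N^{(S,L),\delta}(y) \;=\; N^{\Delta,\delta}_{\trop}(y) \;=\; \sum_{(C,h)} \mult(C,h;y),
\]
where the sum runs over simple $\delta$-nodal tropical curves of degree $\Delta$ through a fixed tropically generic point configuration, and $\Delta$ is the polygon associated to $(S,L)$. By \defref{def:refinedMultiplicityTropicalCurve}, each summand $\mult(C,h;y)$ is a product of quantum numbers $[\Area(\Delta_v)]_y$ over the trivalent vertices of $C$. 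From the explicit expansion
\[
[n]_y \;=\; y^{(n-1)/2}+y^{(n-3)/2}+\cdots+y^{-(n-1)/2} \;\in\; \ZZ_{\ge 0}\bigl[y^{1/2},y^{-1/2}\bigr],
\]
every vertex factor has non-negative integer coefficients, hence so do the products and their sum. Therefore $N^{\Delta,\delta}_{\trop}(y)\in\ZZ_{\ge 0}\bigl[y^{1/2},y^{-1/2}\bigr]$.

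It remains to upgrade this from $y^{\pm 1/2}$ to $y^{\pm 1}$. The refined invariant $\tN^{(S,L),\delta}(y)$ is, by \defref{def:refinedInv}, a Laurent polynomial in $y$ (not just in $y^{1/2}$), symmetric under $y\mapsto y^{-1}$; the equality with $N^{(S,L),\delta}(y)$ in the relevant range therefore forces all half-integer powers of $y$ in the tropical expression to cancel. Combining this parity statement with the non-negativity of each $y^{k/2}$-coefficient that we just established, we conclude
\[
\tN^{(S,L),\delta}(y) \;\in\; \ZZ_{\ge 0}\bigl[y^{\pm 1}\bigr],
\]
as claimed. No step here is serious: the potential subtlety is only the bookkeeping that the half-integer powers indeed cancel, but this is automatic once the identification with $\tN^{(S,L),\delta}(y)$ is invoked.
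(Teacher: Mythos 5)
Your proposal is correct and follows essentially the same route as the paper: the paper's own (two-line) proof likewise observes that the tropical refined Severi degree $N^{\Delta,\delta}(y)$ is by definition a sum of products of quantum numbers, hence has non-negative coefficients, and then invokes Theorem~\ref{thm:refinedInvEqualsRefinedSeveri} to identify it with $\tN^{(S,L),\delta}(y)$ in each listed range. Your extra care about half-integer powers of $y$ is fine but not a new ingredient.
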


\begin{proof}
For any lattice polygon, the refined Severi degree $N^{\Delta,\delta}(y)$
is a Laurent polynomial in $y$ with non-negative coefficients. The
corollary follows from Theorem~\ref{thm:refinedInvEqualsRefinedSeveri}. 
\end{proof}

\begin{conjecture}\label{posconj}
For any smooth projective surface $S$ and $\delta$-very ample line
bundle $L$ on $S$,
the refined invariants $\tN^{(S,L), \delta}(y)$ have non-negative
coefficients.
\end{conjecture}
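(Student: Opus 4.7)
The plan is to reduce positivity to the toric case, where $N^{\Delta,\delta}(y)$ admits a tropical interpretation, and then to attempt to propagate positivity to arbitrary smooth projective surfaces via the universal polynomial formula for $\tN^{(S,L),\delta}(y)$. In the toric case, the definition \eqref{eqn:RefinedSeveriSum} expresses $N^{\Delta,\delta}(y)$ as a finite sum of refined multiplicities $\mult(C,h;y)=\prod_v [\Area(\Delta_v)]_y$, and since each quantum number $[n]_y$ lies in $\ZZ_{\ge 0}[y^{\pm 1/2}]$, the tropical refined Severi degrees have manifestly non-negative coefficients (and the $y\mapsto y^{-1}$ symmetry together with $\chi_{-y}\in\ZZ[y]$ forces these to be integral in $y^{\pm 1}$). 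Assuming \conjref{conj:smoothToric}, this would immediately yield positivity of $\tN^{(S,L),\delta}(y)$ for all smooth toric pairs $(X(\Delta),L(\Delta))$ with $L(\Delta)$ being $\delta$-very ample.

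The second step---extending positivity from toric pairs to arbitrary smooth projective surfaces---is more delicate. Recall that $\tN^{(S,L),\delta}(y)=\widetilde N_\delta(L^2,LK_S,K_S^2,c_2(S);y)$ is a universal polynomial in four intersection numbers. Naive polynomial interpolation does not suffice: even across the abundant supply of smooth toric pairs, the values of $(L^2,LK_S,K_S^2,c_2(S))$ do not Zariski-span $\QQ^4$ in a way that would force coefficient positivity of the interpolating polynomial. Instead, I would attempt to prove a stronger structural statement, namely that $\widetilde N_\delta$ admits a decomposition as a $\ZZ_{\ge 0}[y^{\pm 1}]$-combination of monomials in the ``positive'' parameters appearing in the conjectural multiplicative formula \eqref{Gform} of \conjref{Gconj}. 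Concretely, that conjecture expresses the generating series as $B_1(y,q)^{K_S^2}B_2(y,q)^{LK_S}$ times explicit factors involving $\chi(L)$ and $\chi(\O_S)$, and one could ask whether, after the change of variable $t=\widetilde{DG}_2(y,q)$ and in suitable parameters (e.g.\ $\chi(L)$, $LK_S$, $K_S^2$, $\chi(\O_S)$, possibly shifted by non-negative offsets depending on $\delta$), the universal polynomial has a manifestly positive representation.

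The main obstacle, and the place where this plan is most likely to fail, is precisely this jump from toric to universal positivity. There is no classical mechanism by which a polynomial taking non-negative Laurent values on a restricted set of integer points must have non-negative Laurent coefficients, so one truly needs a lift of positivity to a more structured level. A parallel and potentially more powerful route is to realize $\tN^{(S,L),\delta}(y)$ as an honest graded dimension: for instance, as the $\chi_{-y}$-genus of a moduli space whose Hodge structure is of Hodge--Tate type, so that $h^{p,q}=0$ for $p\ne q$ and $\chi_{-y}=\sum_p h^{p,p}y^p$ has non-negative coefficients by inspection. Combined with the expansion \eqref{eqn:HodgeFormula} and induction on $\delta$, a Hodge--Tate property for $\Hilb^n(\C/\PP^\delta)$---together with positivity control over the auxiliary polynomials $n_r(y)$ for $r<\delta$---would close the argument. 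Both routes, the tropical/universal and the Hodge-theoretic, remain open in general, but each reduces \conjref{posconj} to a concrete statement of independent geometric interest.
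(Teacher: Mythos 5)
The statement you are addressing is \conjref{posconj}, which the paper itself leaves open: there is no proof in the paper, only evidence, namely (i) the observation that for toric $S$ with $L$ $\delta$-very ample the conjecture would follow from \conjref{conj:smoothToric}, because the tropical expression \eqref{eqn:RefinedSeveriSum} is manifestly a sum of products of quantum numbers and so lies in $\ZZ_{\ge 0}[y^{\pm 1}]$; (ii) the cases proved unconditionally in \thmref{thm:refinedInvEqualsRefinedSeveri} and the corollary following it; (iii) the result of \cite{GS13} for $K3$ and abelian surfaces; and (iv) numerical checks suggesting positivity of the low-order coefficients of $(t/g(y,t))^{l}$ in \eqref{Gform1} for $l>\delta$. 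Your proposal is likewise not a proof, and you say so yourself: every step that would actually close the argument is left as an open problem. Its first route (toric positivity from tropical multiplicities, conditional on \conjref{conj:smoothToric}) is exactly the paper's own remark, and its second route (propagating positivity through the universal/multiplicative structure of \conjref{Gconj}) is essentially the heuristic the authors sketch via \eqref{Gform1}; so on these points you have reproduced the paper's evidence rather than supplied an argument.

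The genuine gaps are therefore these. First, the toric step is conditional on \conjref{conj:smoothToric}, which is itself open, so even the restricted class of smooth toric $\delta$-very ample pairs is not settled by your plan. Second, as you correctly observe, there is no mechanism by which non-negativity of the universal polynomial $\widetilde N_\delta$ on the toric locus of $(L^2,LK_S,K_S^2,c_2(S))$ forces a positive structural representation valid for all smooth surfaces; naming a ``manifestly positive decomposition'' is a restatement of the problem, not a reduction of it. Third, the Hodge-theoretic route is the only genuinely new idea relative to the paper, but as stated it is doubtful and in any case insufficient: it is not clear (and not argued) that $\Hilb^n(\C/\PP^\delta)$ is of Hodge--Tate type in general, and even granting $\chi_{-y}(\Hilb^n(\C/\PP^\delta))\in\ZZ_{\ge 0}[y]$ one still has to extract $n_\delta(y)$ from \eqref{eqn:HodgeFormula}, a recursive inversion in which contributions of the $n_r(y)$ with $r<\delta$ are subtracted off; positivity of the generating series does not pass to the coefficients $n_r(y)$ without exactly the ``positivity control'' you defer. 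So the proposal should be read as a (reasonable) research plan that largely coincides with the paper's own evidence, not as a proof of \conjref{posconj}.
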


We have the following evidence for this conjecture:
 In \cite{GS13} \conjref{Gconj} is proven for $S$ an abelian or K3 surface, and the positivity of $\tN^{(S,L), \delta}(y)$ follows for all 
 line bundles $L$ on $S$. 
 If $S$ is a toric surface and $L$ is $\delta$-very ample on $S$, then \conjref{posconj} is implied by  \conjref{conj:smoothToric}.
 Numerical computations give in all examples considered that \conjref{posconj} is true. 
 Comparing with \eqref{Gform1} numerical checks confirm that, in the realm checked, for $l>\delta$ all the coefficients of 
 $(\frac{t}{g(y,t)})^l$ of degree at most $\delta$ in $t$ are positive. If $L$ is $\delta$-very ample we expect 
 $\chi(L)>\delta$ and also $\chi(L)$ that is  large with respect to $K_S^2$ and $LK_S$. 
 Therefore we would expect 
that all coefficients of the left hand side of    \eqref{Gform1} of degree at most $\delta$ in $t$ are nonnegative.

\subsection{Coefficient polynomiality of refined Severi degrees}
\label{sec:coefficient_polynomiality}

The refined Severi degrees $N^{d, \delta}(y)$ of $\PP^2$,
as Laurent polynomial in $y$, have non-negative integral
coefficients. Furthermore, for fixed $\delta$, these coefficients behave
polynomially in $d$, for sufficiently large $d$, by
Theorem~\ref{thm:refinedSeveridegreepoly}. In this section, we conjecture
that particular coefficients of the refined Severi degree are
polynomial for $d$ \emph{independent of } $\delta$
(Conjecture~\ref{thm:coeff_poly_threshold}). We also give enumerative meaning to the
first leading coefficient
(Proposition~\ref{prop:leadingCoefficients}). For simplicity, we consider
only $\PP^2$ in this section.
Throughout this section, we fix the number of nodes $\delta \ge 1$.

\begin{notation}

We denote the coefficients of the refined Severi degree by
\[
N^{d, \delta}(y) = p^\delta_{d,0} \cdot y^\delta + p^\delta_{d,1} \cdot y^{\delta-1}
+p^\delta_{d,2} \cdot
y^{\delta-2} + \cdots + p^\delta_{d,\delta} \cdot y^{0} + \cdots + p^\delta_{d,0}
\cdot  y^{-\delta}
\]
for $p^\delta_{d,0}, p^\delta_{d,1}, \dots, p^\delta_{d,\delta} \in
\ZZ_{\ge  0}$.

Similarly, we write the coefficients of the refined node
  polynomial as
\[
N_\delta(d; y) = p^\delta_{0}(d) \cdot y^\delta + p^\delta_{1}(d) \cdot y^{\delta-1} + p^\delta_{2}(d) \cdot
y^{\delta-2} + \cdots + p^\delta_{\delta}(d) \cdot y^{0} + \cdots + p^\delta_{0}(d) \cdot y^{-\delta}
\]
for polynomials $p^\delta_{0}(d), p^\delta_{1}(d), \dots, p^\delta_{\delta}(d) \in
\ZZ[d]$.
\end{notation}

From Theorem~\ref{thm:refinedSeveridegreepoly}, the following is
immediate.

\begin{corollary}
For $0 \le i \le \delta$, we have $p^\delta_i(d) = p^\delta_{d,i}$, whenever $d \ge \delta$.
\end{corollary}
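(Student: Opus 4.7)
The plan is to extract this as a direct coefficient-matching consequence of Theorem~\ref{thm:refinedSeveridegreepoly}(1). That theorem asserts the equality of two objects, $N_\delta(d; y)$ and $N^{d, \delta}(y)$, as Laurent polynomials in $y$, valid for every integer $d \ge \delta$. Two Laurent polynomials over $\QQ$ are equal if and only if their coefficients agree termwise, so the statement $p_i^\delta(d) = p^\delta_{d,i}$ is really just the reading of this equality in a fixed basis $\{y^{\delta-i}\}_{0 \le i \le \delta}$ (extended by symmetry $y \mapsto y^{-1}$ to the negative powers).

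More explicitly, I would fix $d \ge \delta$ and invoke Theorem~\ref{thm:refinedSeveridegreepoly}(1) to write
\[
\sum_{i=0}^{\delta} p_i^\delta(d)\, y^{\delta-i} + \sum_{i=0}^{\delta-1} p_i^\delta(d)\, y^{-(\delta-i)} \;=\; N_\delta(d;y) \;=\; N^{d,\delta}(y) \;=\; \sum_{i=0}^{\delta} p_{d,i}^\delta\, y^{\delta-i} + \sum_{i=0}^{\delta-1} p_{d,i}^\delta\, y^{-(\delta-i)},
\]
where the left-hand sides use the symmetry of the refined node polynomial under $y \mapsto 1/y$ and the right-hand sides use the analogous symmetry of the refined Severi degree (both noted explicitly in Section~\ref{sec:refined}). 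Comparing the coefficient of $y^{\delta-i}$ on both sides for each $0 \le i \le \delta$ yields $p_i^\delta(d) = p_{d,i}^\delta$.

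There is no real obstacle here; the only minor check is that the polynomial bound $d \ge \delta$ coming out of Theorem~\ref{thm:refinedSeveridegreepoly}(1) is exactly the bound being claimed in the corollary, and that the symmetry conventions in the two notations match so that the same symbol $p_i^\delta$ refers to the same coefficient on both sides. Both points are visible from the definitions set up immediately before the corollary statement, so the proof reduces to a single sentence invoking Theorem~\ref{thm:refinedSeveridegreepoly}(1) and equating coefficients.
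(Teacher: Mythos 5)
Your proposal is correct and matches the paper exactly: the paper derives this corollary as an immediate consequence of Theorem~\ref{thm:refinedSeveridegreepoly}(1), which is precisely your coefficient-matching argument. Nothing further is needed.
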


Conjecturally, we have the lower bound $d \ge \tfrac{\delta}{2} + 1$
(cf., Conjecture~\ref{ref-sev}), which still depends on $\delta$. We
conjecture that for the leading coefficients of the refined Severi
degree, this dependence disappears.

\begin{conjecture}
\label{thm:coeff_poly_threshold}
For $0 \le i \le \delta$, we have $p^\delta_i(d) = p^\delta_{d,i}$, whenever $d \ge
i + 2$.
\end{conjecture}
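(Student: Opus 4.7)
The plan is to extract $p^\delta_{d,i}$ combinatorially via the refined floor diagram description of Section~\ref{sec:FD}. By the results of that section,
\[
N^{d,\delta}(y) \;=\; \sum_{(\D,M)} \mult(\D,M;y),
\]
the sum ranging over marked refined floor diagrams $(\D,M)$ of degree $d$ and cogenus $\delta$, with $\mult(\D,M;y)$ a product of quantum integers $[w]_y$. Since $\deg_y [w]_y=(w-1)/2$, I would assign to $(\D,M)$ its \emph{$y$-defect}
\[
\defect(\D,M) \;:=\; \delta-\deg_y \mult(\D,M;y)\;\ge\; 0.
\]
Expanding, $p^\delta_{d,i}$ receives contributions only from $(\D,M)$ with $\defect(\D,M)\le i$, through the $(i-\defect(\D,M))$-th sub-leading coefficient of the corresponding $\mult(\D,M;y)$. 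All such coefficients are non-negative integers, so there is no cancellation to worry about.

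Next apply the refined Fomin--Mikhalkin template decomposition of Section~\ref{sec:FD}: each $(\D,M)$ is recorded by a sequence of nontrivial templates $\Gamma_1,\dots,\Gamma_s$ inserted at ordered floor positions $k_1<\cdots<k_s$ in $\{1,\dots,d\}$, and both the cogenus and the refined multiplicity factor over the templates. In particular the defect is additive, $\defect(\D,M)=\sum_j\defect(\Gamma_j)$. The crucial combinatorial step is to bound template length by template defect: I would aim for an estimate
\[
\defect(\Gamma) \;\ge\; \ell(\Gamma)
\]
for every nontrivial template $\Gamma$ of length $\ell(\Gamma)$. Morally, each length-$1$ increment of a template requires either an additional edge of weight $\ge 2$ or an additional branching, both of which strictly lower $\deg_y\mult(\Gamma;y)$; the bound should come from a direct inspection of the template classification in~\cite{FM} with edges weighted by quantum integers rather than by integers.

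Granted the inequality, any $(\D,M)$ contributing to $p^\delta_{d,i}$ has total template length $L:=\sum_j\ell(\Gamma_j)\le \sum_j\defect(\Gamma_j)\le i$. Once a concrete collection of templates of total length $L$ is fixed, the number of ways to place them at ordered positions in $\{1,\dots,d\}$ is polynomial in $d$ as soon as $d\ge L+2$, by the elementary inclusion-exclusion that removes boundary overlaps at floors $1$ and $d$, as used in~\cite{FM} and in Section~\ref{sec:refinednodepolys}. Combined with $L\le i$, this yields polynomiality of $p^\delta_{d,i}$ in $d$ for $d\ge i+2$; by Theorem~\ref{thm:refinedSeveridegreepoly} the resulting polynomial must coincide with $p^\delta_i(d)$, proving the conjecture. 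The main obstacle is establishing the template defect inequality with the sharp constant: only then does one obtain the threshold $i+2$ rather than $i+c$ for some larger $c$. A weaker inequality $\defect(\Gamma)\ge \ell(\Gamma)-c_0$ would still prove a stabilization of the form $p^\delta_i(d)=p^\delta_{d,i}$ for $d\ge i+c_0+2$, so the entire content of the conjecture is the sharpness of the template-level defect estimate.
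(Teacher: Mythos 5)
First, a point of orientation: the statement you were asked to prove is stated in the paper only as a conjecture (it is part of \cite[Conj.~89]{GS12}); the paper proves just the case $i=0$, by the bijective argument of Proposition~\ref{prop:leadingCoefficients}. So your argument has to stand on its own, and as written it does not: the key template inequality is false. Compute the defect of a template $\Gamma$ directly: $\delta(\Gamma)=\sum_e\bigl(\len(e)\wt(e)-1\bigr)$, while $\deg_y\mult(\Gamma;y)=\sum_e\bigl(\wt(e)-1\bigr)$, so
\[
\defect(\Gamma)\;=\;\sum_{e}\wt(e)\bigl(\len(e)-1\bigr),
\]
which is exactly the statistic $i(\D)$ used in the paper's proof of Proposition~\ref{prop:leadingCoefficients}. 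The very first template of Figure~\ref{fig:templates} (a single edge of weight $2$ and length $1$) has $\delta(\Gamma)=1$, $\ell(\Gamma)=1$ and defect $0$, so $\defect(\Gamma)\ge\ell(\Gamma)$ fails; more generally every template all of whose edges have length $1$ (hence weight $\ge 2$) has defect $0$. Your heuristic breaks precisely here: a length-$1$ edge of weight $\ge 2$ raises $\delta(\Gamma)$ and $\deg_y\mult(\Gamma;y)$ by the same amount $\wt(e)-1$, so it does not increase the defect. The sharp general bound is only $\defect(\Gamma)\ge\ell(\Gamma)-1$ (the internal vertices must be covered by edges of length $\ge2$).

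With the corrected bound, a diagram contributing to $p^\delta_{d,i}$ has total template length at most $i+s$, where $s$ is the number of templates, and $s$ can be as large as $\delta$; so the crucial conclusion ``total length $\le i$'' fails already for $i=0$, where the contributing diagrams are exactly those whose template edges all have length $1$ and can consist of up to $\delta$ templates of total length $\delta$. Your placement-count argument then only yields a stabilization threshold growing with $\delta$, which is the content of Theorem~\ref{thm:refinedSeveridegreepoly}, not of the conjecture; and the fallback ``$\defect\ge\ell-c_0$'' does not rescue it either, since the obstruction is the unbounded number of defect-$0$ templates, not a constant per template. The genuinely hard point is to control the $d$-dependence of the placements and markings of these many defect-$\le i$ templates for small $d$; the paper achieves this only for $i=0$, by the contractible-cycle bijection giving $p^\delta_{d,0}=\binom{\binom{d-1}{2}}{\delta}$ for all $d\ge1$, and for $i\ge1$ the statement remains open (cf.\ Remark~\ref{ppolconj}).
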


In other words, the larger the order of the coefficients of the
refined Severi degree, the sooner the polynomiality kicks in.
This conjecture was predicted as part of \cite[Conj.~89]{GS12}, where in addition a formula for the coefficients  $p^\delta_i(d)$
was conjectured. \propref{prop:leadingCoefficients} below gives a new proof for $i=0$.

\begin{remark}\label{ppolconj}
\begin{enumerate}
\item \corref{thm:coeff_poly_threshold} is part of  \cite[Conj.~89(1)]{GS12}.
\item More precisely this conjecture says that $p_{i}^\delta(d)$ is a polynomial of degree $2\delta$ in $d$, which is divisible by 
$\binom{\binom{d-1}{2}-3i}{\delta-i}$. Moreover \cite[Conj.~86,~Conj.~87]{GS12}  give a conjectural formula for the quotient
$p_{i}^\delta(d)/\binom{\binom{d-1}{2}-3i}{\delta-i}$ in terms of the $\widetilde N^{d,\delta}(y)$ with $\delta\le 3i$. 
Thus,  assuming these conjectures, \thmref{thm:refinedInvEqualsRefinedSeveri}
gives a formula for $p^\delta_{i}(d)$ for $i\le 3$. 

\item Computational evidence suggests that for $d\ge 2$ the bound in
\corref{thm:coeff_poly_threshold} is optimal: $p_i^\delta(d)=p_{d,i}^\delta$, if and only if $d\ge i+2$. We checked this
for $d\le 14$, $\delta\le 11$.  
\end{enumerate}

\end{remark}

We give a
formula for leading coefficient of the refined Severi degree.
This result was also obtained
in \cite[Proposition~83]{GS12} and \cite[Proposition 2.11]{IM12}. 

\begin{proposition}
\label{prop:leadingCoefficients}
The leading coefficients of $N^{d, \delta}(y)$ is given by
\[
p^\delta_{d,0} = \binom{\binom{d-1}{2}}{\delta} \quad \text{ for }d \ge 1.
\]
\end{proposition}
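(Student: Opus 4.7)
The plan is to compute $p^\delta_{d,0}$ via the tropical formula of Section~\ref{sec:tropicalRefinedCounting}. By \thmref{thm:refinedSeveriIndep} and its corollary, $N^{d,\delta}(y) = \sum_{(C,h)} \prod_v [m_v]_y$ is independent of the choice of (generic) tropical point configuration $\Pi$, so I take $\Pi$ to be vertically stretched (\defref{def:verticallyStretched}). Since $[m]_y = y^{(m-1)/2} + \cdots + y^{-(m-1)/2}$ has leading coefficient $1$ and $y$-degree $\tfrac{m-1}{2}$, the coefficient $p^\delta_{d,0}$ of $y^\delta$ in $N^{d,\delta}(y)$ equals the count of $\delta$-nodal simple tropical curves $(C,h)$ of degree $d$ through $\Pi$ satisfying $\tfrac12\sum_v (m_v-1) = \delta$, with the sum ranging over trivalent vertices of $C$ and $m_v = \Area(\Delta_v)$.

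The first step is the upper bound $\tfrac12\sum_v (m_v - 1) \le \delta$ for any contributing $(C,h)$. I would apply Pick's theorem to each triangle of the dual subdivision $\Delta_C$ of $\Delta_d$, writing $m_v = 2I(\Delta_v) + B(\Delta_v) - 2$ so that $\tfrac{m_v-1}{2} = I(\Delta_v) + \tfrac{B(\Delta_v)-3}{2}$, then sum over triangles. Combining with the partition of the interior lattice points
\[
\tbinom{d-1}{2} = V^0 + I_E + I_T + I_P,
\]
where $V^0$ counts interior subdivision vertices, $I_E$ counts lattice points in the interior of interior edges, and $I_T$, $I_P$ count interiors of triangular, resp., parallelogram faces, together with an Euler-type identity for the subdivision and the identification of $V^0$ with a tropical genus quantity (\defref{def:tropicalCurveProperties}(5)), yields the bound. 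Equality forces a rigid combinatorial structure on $\Delta_C$: each parallelogram is a unit parallelogram, each triangle is ``empty'' in the sense that its only boundary lattice points are its three vertices, and no interior edge of $\Delta_C$ contains an interior lattice point. Consequently, the $\delta$ interior lattice points of $\Delta_d$ that fail to appear as interior subdivision vertices are ``absorbed'' by the $\delta$ parallelograms, and the configuration is uniquely pinned down by a choice of $\delta$ out of $\binom{d-1}{2}$ interior lattice points.

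The final step is to count these extremal configurations. For the vertically stretched $\Pi$, the floor diagram correspondence of Section~\ref{sec:FD} translates the extremality conditions to the observation that all extremal (marked) floor diagrams share the same underlying unweighted maximum-genus graph on $d$ floors, and that only the $\binom{d-1}{2}$ ``free'' elevator slots (as opposed to the $d-1$ rigid ones forced by divergence constraints) can absorb excess weight. Each choice of $\delta$ such free slots, boosted from weight $1$ to weight $2$, gives a single extremal marked floor diagram contributing exactly $1$ to the coefficient of $y^\delta$ (since the leading coefficient of each $[2]_y^2$ is $1$), yielding $p^\delta_{d,0} = \binom{\binom{d-1}{2}}{\delta}$. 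The principal obstacle lies in reconciling the extremality analysis with the floor diagram bookkeeping, in particular verifying that the marking count introduces no spurious factors and that the set of ``free'' elevator slots is enumerated exactly by subsets of interior lattice points of $\Delta_d$. An alternative route that bypasses this delicate combinatorics is induction on $d$ via the refined Caporaso--Harris recursion~\eqref{refrec}, tracking only the top $y$-coefficient and matching it against Pascal's identity $\binom{n}{\delta} = \binom{n-1}{\delta} + \binom{n-1}{\delta-1}$ applied to $n = \binom{d-1}{2}$.
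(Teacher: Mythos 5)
Your overall strategy (isolate the top $y$-degree, pass to a vertically stretched configuration, count the extremal objects via floor diagrams) is in the spirit of the paper's proof, but the two steps that carry the actual content are wrong as stated. First, the equality-case analysis of the dual subdivision is backwards. For a vertically stretched configuration the curves whose multiplicity attains degree $\delta$ in $y$ are exactly the floor-decomposed curves with elevators of weight $\ge 2$; dually, their subdivisions contain horizontal edges of lattice length $\ge 2$ and triangles with extra boundary lattice points. Already for $d=3$, $\delta=1$ the unique contributor to $p^1_{3,0}=1$ is the curve corresponding to the floor diagram with a weight-$2$ edge between floors $2$ and $3$ (multiplicity $[2]_y^2$), whose dual subdivision has an edge with an interior lattice point and two triangles of normalized area $2$ with four boundary points each. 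This contradicts your claimed rigidity (unit parallelograms, triangles with only their three vertices as boundary lattice points, no lattice point interior to any edge); configurations with all areas equal to $1$ have multiplicity $1$ and feed the middle coefficients, not $y^\delta$. Consequently the bridge from "extremal configurations" to "$\delta$-subsets of the $\binom{d-1}{2}$ interior lattice points" is not established.

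Second, the floor-diagram count is mis-executed. Keeping the maximum-genus graph $\D_0$ and "boosting $\delta$ free elevator slots from weight $1$ to weight $2$" produces invalid floor diagrams: in $\D_0$ every floor already has divergence $1$, the maximum allowed for $\PP^2$, so raising any weight violates the divergence condition; moreover this enumeration omits extremal diagrams with elevators of weight $\ge 3$, which do contribute to $y^\delta$ because $[w]_y^2$ has top degree $w-1$ with leading coefficient $1$. The correct operation — and this is what the paper does — is to work with the unique \emph{marked} cogenus-$0$ diagram $\D_0$ (genus $\binom{d-1}{2}$) and to \emph{merge} parallel unit elevators: one contracts $\delta$ of the $\binom{d-1}{2}$ two-cycles whose midpoints are adjacent in the linear order; overlapping choices account for weights $\ge 3$, divergences are preserved, and the construction bijects onto marked diagrams all of whose edges join consecutive floors and whose multiplicity attains degree $\delta$, giving $\binom{\binom{d-1}{2}}{\delta}$ directly. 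The paper's proof bypasses your Pick-theorem analysis entirely by introducing the statistic $i(\D)=\sum_e \wt(e)(\len(e)-1)$ and arguing only with floor diagrams and markings; the marking bookkeeping that you flag as the "principal obstacle" (and leave unverified, also in the alternative Caporaso--Harris sketch) is precisely the content of that argument, so as it stands the proposal has a genuine gap.
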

The formula could be interpreted as the number of ways to choose $\delta$ of the 
$\binom{d-1}{2}$ nodes of a genus $0$ nodal curve $C$ of degree $d$, i.e. as the number
of $\delta$-nodal curves obtained as partial resolutions of $C$.

We prove this proposition in Section~\ref{sec:refinednodepolys}.

The same formulas hold for the coefficients of the irreducible refined Severi degrees $N^{d,\delta}_0(y)$.
Again we can write $N^{d,\delta}_0(y)=p_{d,0}^{\delta,0} y^{\delta}+p_{d,1}^{\delta,0} y^{\delta-1}+\ldots+p_{d,1}^{\delta,0} y^{-\delta+1}+p_{d,0}^{\delta,0} y^{-\delta}$.
Assuming Conjecture~\ref{thm:coeff_poly_threshold}, a similar result also holds for the $p_{d,i}^{\delta,0}$, because of the following lemma.

\begin{lemma} Assuming Conjecture~\ref{thm:coeff_poly_threshold}, we
  have $p^{\delta,0}_{d,i}=p^\delta_{d,i}$ if $d\ge i+2$.
\end{lemma}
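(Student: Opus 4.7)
The plan is to expand the exponential of the defining log--exp identity \eqref{eqn:irred_refined_Severi_definition} and then control the $y$-degree of every ``reducible'' term that appears.

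First I would apply $\exp$ to \eqref{eqn:irred_refined_Severi_definition} and read off the coefficient of $v^{dH}z^{\dim|dH|-\delta}/(\dim|dH|-\delta)!$ to obtain, on $\PP^2$,
\[
N^{d,\delta}(y)=N_0^{d,\delta}(y)+\sum_{t\ge 2}\frac{1}{t!}\sum_{\substack{d_1+\cdots+d_t=d,\ d_j\ge 1\\ \delta=\sum_j\delta_j+\sum_{a<b}d_ad_b}}\binom{\dim|L|-\delta}{\dim|L_1|-\delta_1,\ldots,\dim|L_t|-\delta_t}\prod_{j=1}^t N_0^{d_j,\delta_j}(y).
\]
The arithmetic constraint $\delta=\sum\delta_j+\sum_{a<b}d_ad_b$ comes from matching $z$-degrees: on $\PP^2$ one has $\dim|d_jH|=d_j(d_j+3)/2$, so $\dim|L|-\sum_j\dim|L_j|=\sum_{a<b}d_ad_b$. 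I will refer to the $t\ge 2$ sum as the \emph{reducible part}.

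Next I would bound the $y$-degree of each reducible summand. Each $N_0^{d_j,\delta_j}(y)$ is a Laurent polynomial in $y^{1/2}$, symmetric under $y\mapsto 1/y$, with maximum $y$-power $y^{\delta_j}$. Hence the product $\prod_j N_0^{d_j,\delta_j}(y)$ has $y$-degree at most $\sum_j\delta_j=\delta-\sum_{a<b}d_ad_b$, and consequently its contribution to the coefficient of $y^{\delta-i}$ vanishes whenever $\sum_{a<b}d_ad_b>i$.

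The heart of the argument is the elementary inequality $\sum_{a<b}d_ad_b\ge d-1$ valid for any decomposition $d=d_1+\cdots+d_t$ with $t\ge 2$ and $d_j\ge 1$. Since $2\sum_{a<b}d_ad_b=d^2-\sum_j d_j^2$, minimizing $\sum_{a<b}d_ad_b$ is equivalent to maximizing $\sum_j d_j^2$ under the constraints, which is achieved by concentrating all excess mass in a single part: $(d-t+1,1,\ldots,1)$. This gives $\sum_{a<b}d_ad_b\ge (d-t+1)(t-1)+\binom{t-1}{2}$, which is itself minimized at $t=2$, yielding the bound $d-1$. Combining this with the previous step, if $d\ge i+2$ then every reducible decomposition satisfies $\sum_{a<b}d_ad_b\ge d-1\ge i+1>i$, so the reducible part contributes nothing to the coefficient of $y^{\delta-i}$. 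Therefore $p^\delta_{d,i}=p^{\delta,0}_{d,i}$, as claimed.

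The only mildly subtle step is the combinatorial inequality $\sum_{a<b}d_ad_b\ge d-1$; everything else is formal bookkeeping from \eqref{eqn:irred_refined_Severi_definition} plus the top-degree estimate on $N_0^{d_j,\delta_j}(y)$. The hypothesis \conjref{thm:coeff_poly_threshold} is not needed for the identity itself; it enters the surrounding discussion only to promote $p^{\delta,0}_{d,i}$ from being equal to $p^\delta_{d,i}$ for $d\ge i+2$ to being the specialization, for such $d$, of the polynomial $p^\delta_i(d)\in\QQ[d]$.
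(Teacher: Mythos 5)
Your proof is correct and follows essentially the same route as the paper: both express $N^{d,\delta}(y)-N_0^{d,\delta}(y)$ as a sum of products $\prod_j N_0^{d_j,\delta_j}(y)$ with $\delta=\sum_j\delta_j+\sum_{a<b}d_ad_b$, bound the top $y$-degree of each product by $\sum_j\delta_j$, and use the minimality of $\sum_{a<b}d_ad_b$ at $t=2$, $\{d_1,d_2\}=\{1,d-1\}$ (giving $d-1$) to kill all reducible contributions to the coefficient of $y^{\delta-i}$ when $d\ge i+2$. Your closing observation that Conjecture~\ref{thm:coeff_poly_threshold} is not actually used in the argument is also consistent with the paper, whose proof likewise never invokes it.
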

\begin{proof} 
If we specialize  the formula
\eqref{eqn:refined_in_terms_of_irreducible_refined}
to $N^{d,\delta}(y)$, we express 
$N^{d, \delta}(y)-N^{d,\delta}_0(y)$ as a sum of products 
$ \prod_{i=1}^t N^{d_i, \delta_i}_0(y)$, with $t\ge 2$, 
$d=d_1+\ldots + d_t$ 
and 
$$\delta=\sum_{i=1}^t \delta_i+\frac{1}{2}\sum_{1\le i< j\le t} \big((d_i+d_j)^2-d_i^2-d_j^2\big).$$
It is an easy exercise to see that for given $d$ the rightmost sum is minimal if $t=2$ and $\{d_1,d_2\}=\{1,d-1\}$, and the corresponding sum  is $d-1$. Thus in all summands for $N^{d, \delta}(y)-N^{d, \delta}_0(y)$ we have 
$\delta-\sum_{i}\delta_i\ge d-1$. As the $N^{d_i,\delta_i}_0(y)$ have degree at most $\delta_i$ in $y,y^{-1}$, we see that
$p^\delta_{d,i}=p^{\delta,0}_{d,i}$ for $i<d-1$.
\end{proof}

The argument also shows that $N^{d,\delta}(y)=N^{d,\delta}_0(y)$ if $\delta\le d-2$.  Thus we obtain the following corollary

\begin{corollary}
$N^{d,\delta}_0(y)=N_\delta(d;y)$ for $\delta\le d-2$.
\end{corollary}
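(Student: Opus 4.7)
The corollary follows by combining the reducibility analysis from the preceding lemma with the polynomiality result of \thmref{thm:refinedSeveridegreepoly}(1). The plan is to first show the intermediate equality $N^{d,\delta}(y) = N^{d,\delta}_0(y)$ for $\delta \le d-2$, and then invoke \thmref{thm:refinedSeveridegreepoly}(1) to identify the common value with $N_\delta(d;y)$.

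For the first step, I would use the expansion \eqref{eqn:refined_in_terms_of_irreducible_refined} of $N^{d,\delta}(y)$ as a sum of products of irreducible refined Severi degrees over decompositions of the point configuration and corresponding Minkowski decompositions of $\Delta$. Isolating the irreducible piece, every remaining term comes from a partition $d = d_1 + \cdots + d_t$ with $t\ge 2$ of the degree-$d$ triangle into smaller triangles, and, as noted in the proof of the lemma, the constraint
\[
\delta = \sum_{i=1}^t \delta_i + \sum_{1\le i<j\le t} d_i d_j
\]
forces $\sum_{i<j} d_i d_j \ge d-1$, the minimum being realized by the splitting $d=1+(d-1)$. Consequently $\sum_i \delta_i \le \delta - (d-1)$. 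Under the hypothesis $\delta \le d-2$, this upper bound is strictly negative, so no admissible decomposition exists, and the reducible contribution to $N^{d,\delta}(y)$ vanishes identically. Hence $N^{d,\delta}(y) = N^{d,\delta}_0(y)$ whenever $\delta \le d-2$.

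For the second step, I apply \thmref{thm:refinedSeveridegreepoly}(1), which guarantees $N^{d,\delta}(y) = N_\delta(d;y)$ as soon as $d \ge \delta$. The inequality $\delta \le d-2$ implies $d \ge \delta$ with room to spare, so the theorem applies and chains with the previous equality to give $N^{d,\delta}_0(y) = N_\delta(d;y)$, as desired.

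There is essentially no obstacle here: both ingredients are already in hand, and the only verification is the numerical estimate on Minkowski mixed areas for triangles, which is elementary. The substance of the statement is that the refined node polynomial, which was defined through the full (possibly reducible) refined Severi degree, in fact coincides with the irreducible count in the polynomial range $\delta \le d-2$.
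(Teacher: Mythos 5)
Your proposal is correct and follows essentially the same route as the paper: the paper derives $N^{d,\delta}(y)=N^{d,\delta}_0(y)$ for $\delta\le d-2$ from the same decomposition \eqref{eqn:refined_in_terms_of_irreducible_refined} together with the mixed-area bound $\sum_{i<j}d_id_j\ge d-1$ (as in the preceding lemma), and then identifies the common value with $N_\delta(d;y)$ via Theorem~\ref{thm:refinedSeveridegreepoly}(1). Your observation that this part of the argument does not need the conjectural coefficient statement is also consistent with the paper.
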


\subsection{Numerical evidence for G\"ottsche and Shende's conjectures}
\label{sec:evidence_for_GS_conjectures}

\thmref{thm:refinedSeveridegreepoly} and \thmref{thm:refinedInvEqualsRefinedSeveri} provide strong evidence for 
\conjref{Gconj}, \conjref{ref-sev}: On $\PP^2$ and rational ruled surfaces, for $L$ sufficiently ample with respect to $\delta$,
$N^{(S,L),\delta}(y)$ is indeed given by a node polynomial in $L^2$, $LK_S$, $K_S^2$ and $\chi(\O_S)$.  Furthermore, if $\delta$ is not too large, we show that
this polynomial coincides with $\tN^{(S,L),\delta}(y)$. Unfortunately in the case of rational ruled surfaces we only prove this for $\delta\le 2$.
There is however  more and  stronger numerical evidence, even if it does not lead to a proof of formulas for higher $\delta$.
Below we list briefly some of this evidence.

\begin{enumerate}
\item In \cite{GS12} the $N^{d,\delta}(y)$ have been computed for $d\le 17$ and $\delta\le 32$.  Assuming \conjref{Gconj},\conjref{ref-sev} this determines the 
power series $B_1(y,q)$ and $B_2(y,q)$ modulo $q^{29}$, and thus all the refined invariants $\tN^{(S,L),\delta}(y)$ as polynomials in $L^2$, $LK_S$, $K_S^2$, $\chi(\O_S)$  for all $S$, $L$ and all $\delta\le 28$.
Denote for the moment $\widehat N^{(S,L),\delta}(y)$ the refined invariants obtained this way (and $\widehat N^{d,\delta}(y) $ the corresponding invariants of $\PP^2$.
For $\delta\le 10$ (where the $\tN^{(S,L),\delta}(y)$ have been  computed in \cite{GS12}) $\widehat N^{(S,L),\delta}(y)=\tN^{(S,L),\delta}(y)$.

The computation mentioned above gives $N^{d,\delta}(y)=\widehat N^{d,\delta}(y) $ for $d\le 17$ and $\delta\le min(2d-2,28)$.
\item We have also computed the $N^{d,\delta}(y)$ for $d\le 20$, $\delta\le 20$, again within this realm $N^{d,\delta}(y)=\widehat N^{d,\delta}(y) $ for $\delta\le 2d-2$.

\item We computed $N^{\PP^1\times\PP^1,cF+dH),\delta}$ for arbitrary $\delta$ and $c,d\le 8$. We find in this realm 
$N^{(\PP^1\times\PP^1,cF+dH),\delta}=\widetilde N^{(\PP^1\times\PP^1,cF+dH),\delta}$ for $\delta\le min(2c,2d)$. 
\item We computed $N^{(\Sigma_m,cF+dH),\delta}(y)$ for $m\le 10$, $\delta\le 10$, $d\le 6$, $c\le 10$.
We find in this realm
$N^{(\Sigma_m,cF+dH,\delta}(y)=\tN^{(\Sigma_m,cF+dH,\delta}(y)$ if $\delta\le min(2d,c)$.
\end{enumerate}

\subsection{On the relation with irreducible refined Severi degrees}
\label{sec:relation_irreducible_reducible}

We show that the irreducible refined Severi degree, formally defined
in (\ref{eqn:irred_refined_Severi_definition}) for $\PP^2$, Hirzebruch
surfaces and rational ruled surfaces, agrees with the
refined enumeration of irreducible tropical curves. It therefore follows that
also the irreducible refined Severi degree has non-negative coefficients.

\begin{theorem}
\label{thm:irreducible_refined_Severi_degree}
The tropical irreducible refined Severi degree $N_0^{\Delta,
  \delta}(y)$ agrees with the irreducible refined Severi degree
defined in (\ref{eqn:irred_refined_Severi_definition}).
\end{theorem}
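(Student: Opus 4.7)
The plan is to show that the tropical irreducible refined Severi degrees $N_0^{\Delta,\delta}(y)$ and the tropical (reducible) refined Severi degrees $N^{\Delta,\delta}(y)$ are related by the exponential formula, which is precisely the relation \eqref{eqn:irred_refined_Severi_definition} defining the formal irreducible refined Severi degrees. Since both the formal and tropical versions of $N_0^{\Delta,\delta}(y)$ are uniquely determined by the same $N^{\Delta,\delta}(y)$'s via this relation, they must coincide.

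The starting point is the decomposition \eqref{eqn:refined_in_terms_of_irreducible_refined}, which I would re-derive in slightly more organized form as follows. Fix a tropically generic (e.g., vertically stretched) point configuration $\Pi$ of size $|\Delta\cap \ZZ^2|-1-\delta$. Any simple $\delta$-nodal tropical curve $(C,h)$ of degree $\Delta$ through $\Pi$ decomposes uniquely into its irreducible components $(C_1,h_1),\dots,(C_t,h_t)$ of degrees $\Delta_i$ and node counts $\delta_i$. Because $\Pi$ is generic, each point lies on exactly one component, producing a set partition $\Pi=\Pi_1\sqcup\cdots\sqcup\Pi_t$ with $|\Pi_i|=|\Delta_i\cap\ZZ^2|-1-\delta_i$. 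The degree and node constraints of Definition~\ref{def:tropicalCurveProperties}(\ref{itm:numberOfNodesRed}) give $\Delta=\sum\Delta_i$ and $\delta=\sum\delta_i+\sum_{i<j}\M(\Delta_i,\Delta_j)$. The refined multiplicity factors as $\mult(C,h;y)=\prod_i\mult(C_i,h_i;y)$ because each $3$-valent vertex of $C$ belongs to a unique component, and the mixed-area terms contribute only to $\delta$, not to the multiplicity. Summing over all $(C,h)$ yields exactly \eqref{eqn:refined_in_terms_of_irreducible_refined}.

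Next, I would translate this into a generating-function identity. Set
\[
\mathcal{F}(v,z;y):=\sum_{L,\delta}\frac{z^{\dim|L|-\delta}}{(\dim|L|-\delta)!}\,v^{L}\,N^{(X(L),L),\delta}_{\trop}(y),
\qquad
\mathcal{G}(v,z;y):=\sum_{L,\delta}\frac{z^{\dim|L|-\delta}}{(\dim|L|-\delta)!}\,v^{L}\,N_{0,\trop}^{(X(L),L),\delta}(y),
\]
indexed by lattice polygons (and omitting the class $E$ where appropriate). By the exponential formula for set partitions, expanding $\exp(\mathcal{G})$ yields a double sum over $n\ge 0$, set partitions of an $n$-element set, and an assignment of a polygon and node count to each block. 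Comparing with \eqref{eqn:refined_in_terms_of_irreducible_refined} and using the Minkowski/node compatibility \eqref{eqn:irreducible_refined_Severi_degree_conditions} shows that the coefficient of $z^n v^\Delta/n!$ in $\exp(\mathcal{G})$ is exactly $N^{\Delta,\delta}_{\trop}(y)$ with $\delta$ determined by $n=|\Delta\cap\ZZ^2|-1-\delta$. Hence $1+\mathcal{F}=\exp(\mathcal{G})$, equivalently $\mathcal{G}=\log(1+\mathcal{F})$, which is the defining identity \eqref{eqn:irred_refined_Severi_definition} for the formal irreducible refined Severi degrees. By uniqueness of the log, $N_{0,\trop}^{\Delta,\delta}(y)=N_0^{\Delta,\delta}(y)$.

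The only real point to verify carefully is the bookkeeping under the exponential formula: that summing over unordered set partitions of $\Pi$ with a labeled block-to-irreducible-component assignment reproduces the factor $z^n/n!$ correctly, and that polygons (not just points) combine additively in the exponent via $v^{\Delta_1+\cdots+\Delta_t}=\prod v^{\Delta_i}$. The multiplicativity of refined multiplicities under decomposition into connected components is the geometric input, while the rest is the standard compositional/exponential formula. The main subtlety, and the place I would be most careful, is confirming that genericity of $\Pi$ (and in particular the simplicity of $C$ guaranteed for generic $\Pi$) forces each point to lie on a single irreducible component, so that the partition of $\Pi$ is canonical and no overcounting occurs.
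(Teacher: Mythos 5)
Your proposal is correct and follows essentially the same route as the paper: it uses the decomposition \eqref{eqn:refined_in_terms_of_irreducible_refined} of a reducible tropical curve into irreducible components through a partitioned generic configuration, the multiplicativity $\mult(C;y)=\prod_i\mult(C_i;y)$ together with the additivity $\sum_i(\dim\Delta_i-\delta_i)=\dim\Delta-\delta$, and then the exponential-formula identity between the generating functions, compared with the defining relation \eqref{eqn:irred_refined_Severi_definition}. The only difference is that you re-derive the decomposition and spell out the set-partition bookkeeping, which the paper simply recalls and leaves implicit.
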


The refined multiplicity of an irreducible tropical curve by
definition has non-negative integer coefficients in $y^{\pm
  1}$. Therefore, we have shown the following.

\begin{corollary}
$N_0^{\Delta, \delta}(y)$ has non-negative integer coefficients.
\end{corollary}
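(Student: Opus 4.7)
The plan is to derive the corollary as an immediate consequence of Theorem~\ref{thm:irreducible_refined_Severi_degree}, which identifies the irreducible refined Severi degree defined via \eqref{eqn:irred_refined_Severi_definition} with the tropical count
\[
N_0^{\Delta, \delta}(y) = \sum_{(C,h)} \mult(C,h;y),
\]
summed over irreducible $\delta$-nodal tropical curves of degree $\Delta$ through a fixed tropically generic configuration of $|\Delta\cap \ZZ^2|-1-\delta$ points. The plan therefore is to check that each individual summand lies in $\ZZ_{\ge 0}[y^{\pm 1/2}]$, and that the grand total lies in $\ZZ_{\ge 0}[y^{\pm 1}]$.

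First I would unpack the refined multiplicity via Definition~\ref{def:refinedMultiplicityTropicalCurve}: each simple tropical curve $(C,h)$ contributes $\mult(C,h;y)=\prod_v [\Area(\Delta_v)]_y$, where the product runs over the trivalent vertices of $C$. By definition \eqref{quantum}, each quantum number $[n]_y = y^{(n-1)/2}+y^{(n-3)/2}+\cdots+y^{-(n-1)/2}$ is a Laurent polynomial in $y^{1/2}$ with coefficients in $\{0,1\}$, hence in $\ZZ_{\ge 0}[y^{\pm 1/2}]$. Products and sums preserve the property of having non-negative integer coefficients, so the finite sum defining $N_0^{\Delta,\delta}(y)$ visibly lies in $\ZZ_{\ge 0}[y^{\pm 1/2}]$.

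To upgrade to $\ZZ_{\ge 0}[y^{\pm 1}]$, I would invoke the symmetry $y\mapsto y^{-1}$ together with the fact, already recorded after \eqref{eqn:irred_refined_Severi_definition}, that $N_0^{(S,L),\delta}(y)$ is a Laurent polynomial in $y$ invariant under inversion; combined with the tropical identification provided by Theorem~\ref{thm:irreducible_refined_Severi_degree}, the half-integer powers contributed by vertices with $\Area(\Delta_v)$ even must cancel or combine into integer powers, so the sum lies in $\ZZ[y^{\pm 1}]$. Together with the non-negativity from the previous paragraph, this gives $N_0^{\Delta,\delta}(y)\in \ZZ_{\ge 0}[y^{\pm 1}]$.

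The only genuinely non-trivial ingredient is Theorem~\ref{thm:irreducible_refined_Severi_degree} itself; once that is in hand, the corollary is essentially a tautology, since the positivity of the coefficients is manifest term-by-term in the tropical expression. I do not expect any obstacle in the argument above beyond bookkeeping of the parity of $\sum_v(\Area(\Delta_v)-1)$ to rule out half-integer powers, which is handled by the inversion symmetry.
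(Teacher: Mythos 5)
Your proposal is correct and follows essentially the same route as the paper: the paper also deduces the corollary directly from Theorem~\ref{thm:irreducible_refined_Severi_degree} together with the observation that the refined multiplicity of each irreducible tropical curve is, by definition, a product of quantum numbers and hence has non-negative integer coefficients. Your extra bookkeeping about half-integer powers of $y$ (which the paper glosses over by asserting the multiplicity lies in $\ZZ_{\ge 0}[y^{\pm 1}]$) is a harmless refinement: since all coefficients are non-negative no cancellation can occur, so the recorded fact that $N_0^{(S,L),\delta}(y)$ is a Laurent polynomial in $y$ forces the half-integer-power coefficients to vanish, exactly as you argue.
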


\begin{proof}[Proof of
  Theorem~\ref{thm:irreducible_refined_Severi_degree}.]
Recall the relation
\eqref{eqn:refined_in_terms_of_irreducible_refined}) between refined
Severi degrees and their tropical irreducible analog
\begin{equation}\label{eqn:irred_red_rel}
N^{\Delta, \delta}(y) = \sum_{\Pi = \cup \Pi_i} \sum_{(\Delta_i,
  \delta_i)} \prod_i N^{\Delta_i, \delta_i}_0(y),
\end{equation}
where the first sum is over all partitions of $\Pi$, and the second
sum is over all pairs $(\Delta_i, \delta_i)$ which satisfy (cf. 
\eqref{eqn:irreducible_refined_Severi_degree_conditions})
\begin{equation}
\label{eqn:irreducibl_reducible_relation}
\begin{split}
|\Pi_i| =& |\Delta_i \cap \ZZ^2| - 1 - \delta_i, \quad \text{for all }
1 \le i \le t, \\
\Delta = &\Delta_1 + \cdots + \Delta_t  \quad \text{(Minkowski
  sum)},\\
\delta = & \sum_{i=1}^t \delta_i + \sum_{1 \le i < j \le t}
\M(\Delta_i, \Delta_j).
\end{split}
\end{equation}
Here, again $\M(\Delta_i, \Delta_j) = \tfrac{1}{2}(\Area(\Delta_i +
\Delta_j)-\Area(\Delta_i)-\Area(\Delta_j))$ is the mixed area of
the polygons $\Delta_i$ and $\Delta_j$.

Any collection of lattice
polygons $\Delta_1, \Delta_2, \dots, \Delta_t, \Delta$ and non-negative
integers $\delta_1, \dots, \delta_t, \delta$ satisfying the second and
third condition of (\ref{eqn:irreducibl_reducible_relation}) also satisfy
\[
\sum_{i=1}^t (\dim \Delta_i - \delta_i) = \dim \Delta
- \delta,
\]
where we write $\dim \Delta = |\Delta \cap \ZZ^2| - 1$. Indeed, both
sides equal the number of point conditions of a tropical curve of
degree $\Delta$ with $\delta$ nodes which has
irreducible components of degrees $\Delta_i$ with $\delta_i$ nodes, respectively.
Furthermore, we have $\mult(C; y) = \prod_{i=1}^t
\mult(C_i; y)$. 

The exponential generating functions of the refined Severi degrees $N^{\Delta, \delta}(y)$ and the tropical
irreducible refined Severi degree $N_{0, \trop}^{\Delta, \delta}(y)$
thus satisfy
\begin{equation}
\label{eqn:irred_tropical}
\exp \left( \sum_{\Delta,\delta} \frac{z^{\dim \Delta -\delta}}{(\dim
    \Delta -\delta)!} v^\Delta  N_{0,\trop}^{\Delta,\delta}(y) \right)=
1+\sum_{\Delta,\delta}\frac{z^{\dim \Delta-\delta}}{(\dim \Delta-\delta)!} v^\Delta   N^{\Delta,\delta}(y),
\end{equation}
where we define $v^\Delta \cdot v^{\Delta'} := v^{\Delta + \Delta'}$
for lattice polygons $\Delta$ and $\Delta'$ and both sums are over all
lattice polygons $\Delta$ (up to translation) and $\delta \ge 0$. 
Comparing (\ref{eqn:irred_tropical}) and
(\ref{eqn:irred_refined_Severi_definition}), the result follows.
\end{proof}

\section{$y$-Weighted Floor Diagrams and Templates}
\label{sec:FD}

Floor diagrams are purely combinatorial representations of tropical
curves. They exist for all ``$h$-transverse'' polygons $\Delta$. We
focus mostly on the cases $S = \PP^2$, $\Sigma_m$, and $\PP(1,1,m)$,
all whose moment polygons are $h$-transverse. More specifically, if
we consider tropical curves through a vertically stretched
point configuration (see Definition~\ref{def:verticallyStretched}) the
tropical curves are uniquely encoded by a ``marking'' of a floor
diagram and, vice versa, every marked floor diagram corresponds to a
tropical curve. This gives a purely combinatorial way to compute
refined Severi degrees for toric surfaces with $h$-transverse
polygons. Floor diagrams were invented (in the unrefined setting) by
Brugall\'e and Mikhalkin~\cite{BM1, BM2}.

\subsection{Floor Diagrams}
\label{sec:floor_diagrams}

We now briefly review the marked floor diagrams of Brugall\'e and
Mikhalkin~\cite{BM1, BM2} for surfaces $S=\PP^2$, $S=\PP(1,1,m)$, and $S = \F_m$,
with some emphasis on the $\PP^2$ case. We present them in the notation
of Ardila and Block~\cite{AB10}, following
Fomin and Mikhalkin~\cite{FM}. In each case, we fix a polygon
$\Delta$ (cf.\ Figure~\ref{fig:2polygons}):
\begin{itemize}
\item ($\PP^2$ case) $\Delta = \conv((0,0), (0,d), (d, 0))$, for $d \ge 1$, or
\item ($\F_m$ case) $\Delta = \conv((0,0), (0,d), (c, d), (c
  + md, 0))$, for $c,d,m \ge 1$, or
\item ($\PP(1,1,m)$ case) $\Delta = \conv((0,0), (0,d), (d m, 0))$, for
  $d, m \ge 1$. In this case, set $c = 0$.
\end{itemize}

\begin{figure}[htbp]
\vspace{-3.5em}
 \begin{tabular}{cc}
\includegraphics[scale=0.3]{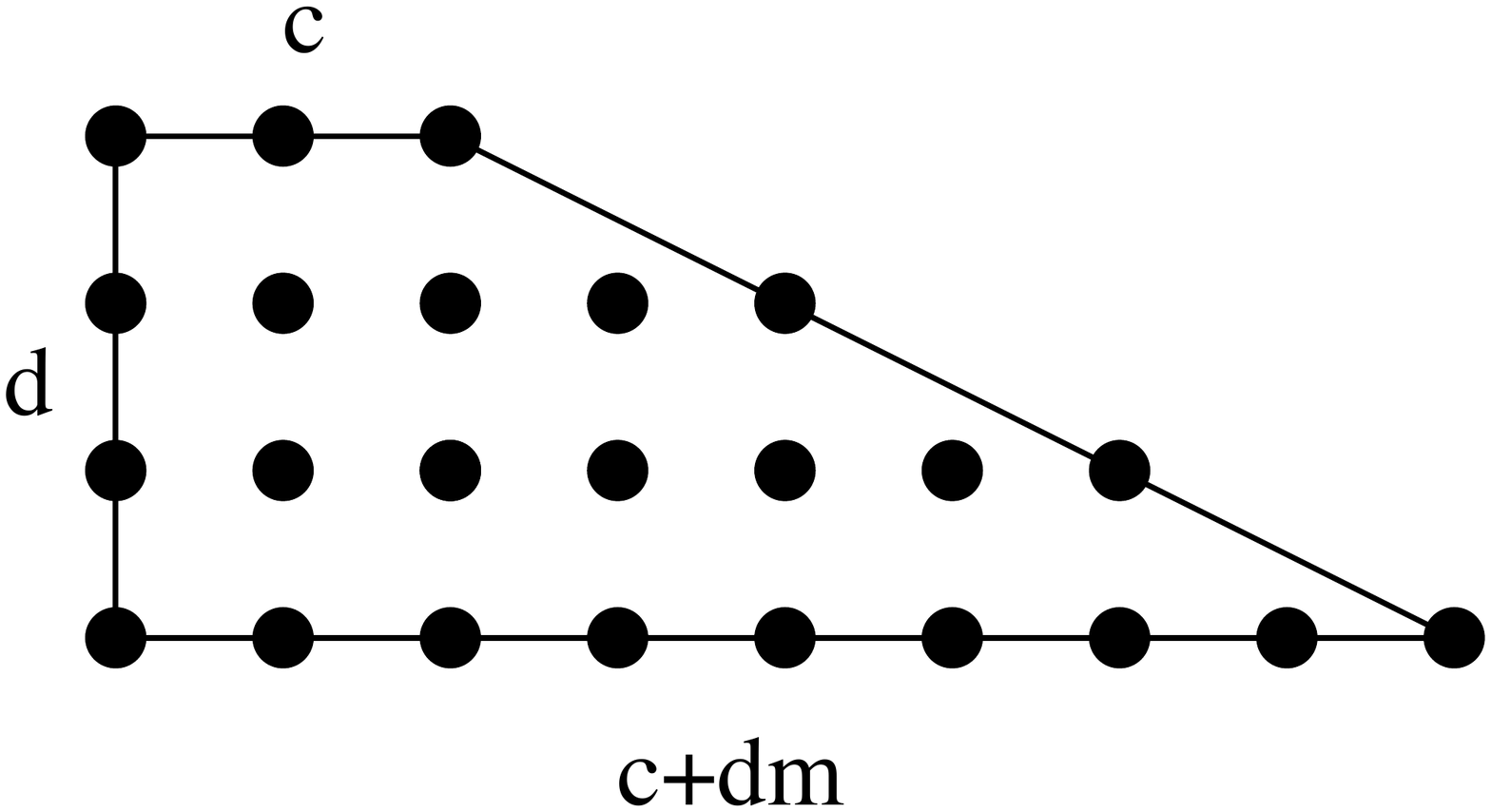}  \quad \quad \quad
&
\quad \quad \quad
\raisebox{15pt}{
\includegraphics[scale=0.25]{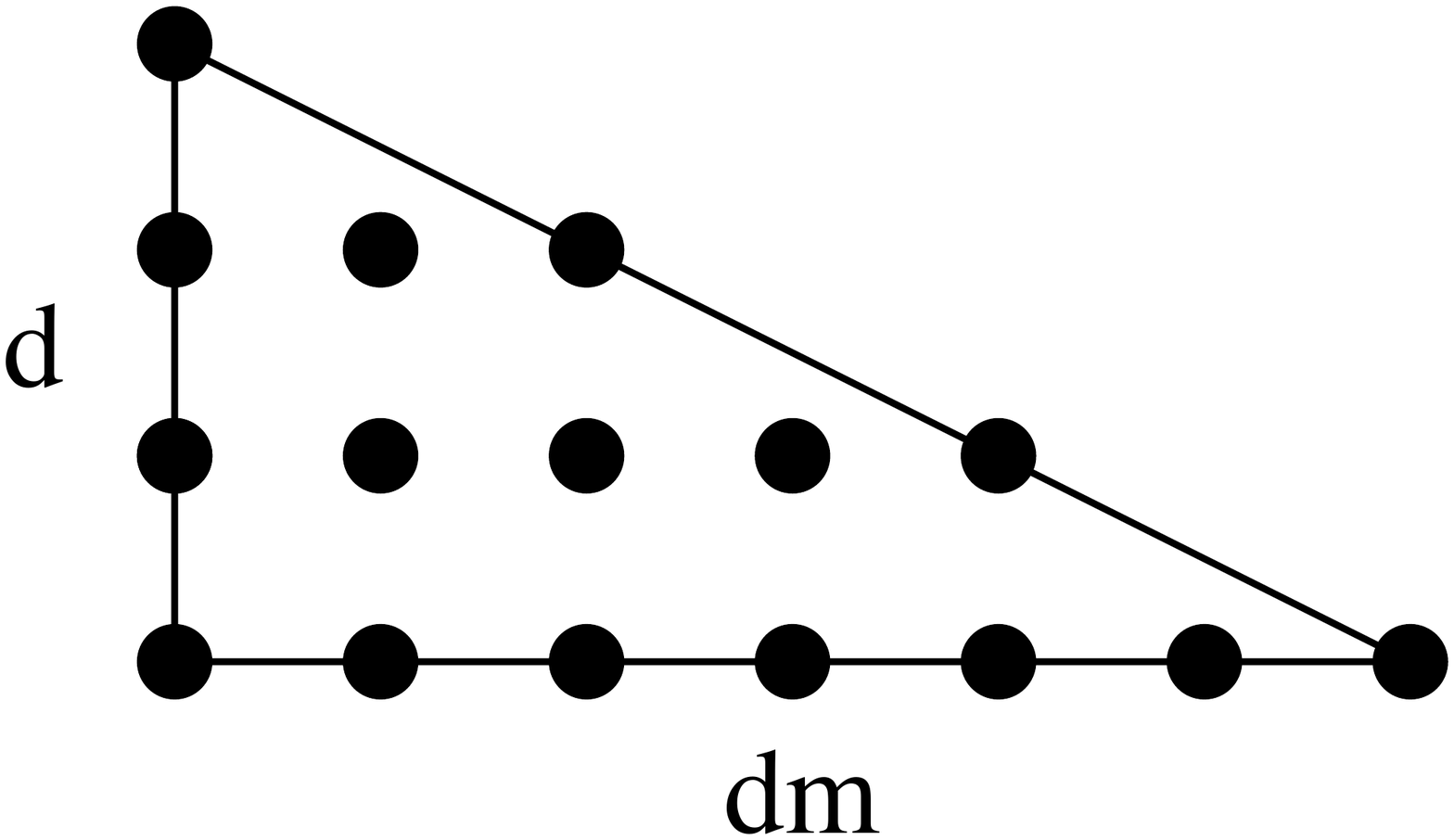}}
\vspace{-4.5em}\\
 \end{tabular}
\caption{Lattice polygons of the Hirzebruch surface
  $\Sigma_m$ with line bundle $L =dH + cF$ (left) and $\PP(1,1,m)$ with
  $L = dH$ (right). In both cases $m=2$. For $S = \PP^2$, we set $c =
  0$ and $m = 1$.}
\label{fig:2polygons}
\end{figure}

\begin{definition}
A \emph{$\Delta$-floor diagram} $\D$ consists of:
\begin{enumerate}
\item A graph on a vertex set $\{1, \dots, d\}$, possibly with
  multiple edges, with edges directed $i \to j$ if $i < j$.
\item A sequence $(s_1, \dots, s_d)$ of non-negative integers such
  that $s_1 + \cdots + s_d =c$. (If $S = \PP(1,1,m)$ then all $s_i$
  equal $0$.)
\item (Divergence Condition) For each vertex $j$ of $\D$, we have 
\[
\text{div}(j) \stackrel{\text{def}}{=} \sum_{ \tiny
     \begin{array}{c}
  \text{edges }e\\
j \stackrel{e}{\to} k
     \end{array}
} \wt(e) -   \sum_{ \tiny
     \begin{array}{c}
  \text{edges }e\\
i \stackrel{e}{\to} j
     \end{array}
} \wt(e)\le m + s_j.
\]
\end{enumerate}
The last condition says that at every vertex of $\D$ the total weight of the outgoing edges
is larger by at most $m + s_j$ than the total weight of the incoming edges.
\end{definition}

We loosely think of $\Delta$ as the \emph{degree} of the floor
diagram $\D$. If $S = \PP^2$, we say that $\D$ is of degree $d$.
A floor diagram is \emph{connected} if its underlying graph is. If $\D$ is
connected its \emph{genus} 
is
the genus of the underlying graph. A connected floor diagram $\D$ of
degree $\Delta$ and genus $g$ has \emph{cogenus} $\delta(\D)$ equal to
the number of interior lattice points in $\Delta$ minus $g$. 

If $\D$ is not connected, there are lattice polygons $\Delta_1,
\Delta_2, \dots$ such that their Minkowski sum equals
$\Delta_1+\Delta_2+\cdots = \Delta$ and the $\Delta_i$ are the degrees
of the connected components of $\D$. Let $\delta_1, \delta_2, \dots$ be the
cogenera of the connected components.
Similarly to the case of tropical curves, we define the \emph{cogenus}
\[
\delta(\D) = \sum_{i} \delta_i + \sum_{i < j} \M(\Delta_i, \Delta_j),
\]
where again $\M(\Delta_i, \Delta_j) := \tfrac{1}{2}(\Area(\Delta_i+\Delta_j)
- \Area(\Delta_i) - \Area(\Delta_j))$ is the \emph{mixed area} of $\Delta_i$
and $\Delta_j$. As before, $\Area(-)$ is the normalized area,
given by twice the Euclidian area in $\RR^2$.

The refined multiplicity of tropical curves (see
Definition~\ref{def:refinedMultiplicityTropicalCurve}) translates to floor
diagram as follows, yielding a purely combinatorial formula for the
refined Severi degrees for $\F_m$ and $\PP(1,1,m)$ in Definition~\ref{def:combrefined}.

\begin{definition}
\label{def:refined_multiplicity_FD}
We define the \emph{refined multiplicity} $\mult(\D, y)$ of a floor
diagram $\D$ as
\[
\mult(\D, y) = \prod_{\text{edges }e} \left( [\wt(e)]_y\right)^2. 
\]
\end{definition}
Notice that the weight $\mult(\D, y)$ is a Laurent polynomial in $y$ with positive integral
coefficients. We draw floor diagrams using the convention that vertices in
increasing order are arranged left to right. Edge weights of $1$
are omitted.

\begin{example}
\label{ex:floordiagram}
An example of a floor diagram for $\PP^2$ of degree $d = 4$, genus $ g=1$,
 cogenus $\delta = 2$, divergences $1,1,0,-2$, and multiplicity
 $\mult(\D; y) = (y^{-1/2} + y^{1/2})^2 = y^{-1} + 2 + y$ is drawn
 below.

\begin{picture}(50,35)(-150,-20)\setlength{\unitlength}{4pt}\thicklines
\oooo\Eeee\eEee\eeOe
\put(15,1.5){\makebox(0,0){$2$}}
\put(7,0){\vector(1,0){1}} 
\put(17,0){\vector(1,0){1}}
\put(27.5,1.75){\vector(2,-1){1}}
\put(27.5,-1.75){\vector(2,1){1}}
\end{picture}
\end{example}

To a floor diagram we associate a last statistic, as in
\cite[Section~1]{FM}. Notice that this statistic is independent of $y$.

\begin{definition}
\label{def:marking}
A \emph{marking} of a floor diagram $\D$ is defined by the following
four step process

{\bf Step 1:} 
For each vertex $j$ of $\D$ create $s_j$ new indistinguishable vertices and
connect them to $j$ with new edges directed towards $j$.

{\bf Step 2:} For each vertex $j$ of $\D$ create $m+s_j- div(j)$ new
indistinguishable vertices and connect them to $j$ with new edges
directed away from $j$. This makes the divergence of vertex $j$ equal
to $m$.

{\bf Step 3:} Subdivide each edge of the original floor
diagram $\D$ into two
directed edges by introducing a new
vertex for each edge. The new edges inherit their weights and orientations. Denote the
resulting graph $\widetilde{\D}$.

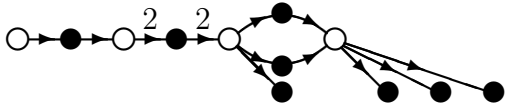
\begin{figure}[htbp]
\begin{center}
\begin{picture}(50,34)(45,-20)\setlength{\unitlength}{4pt}\thicklines
\oooo\Eeee\eEee\eeOe
\put(12.5,2){\makebox(0,0){$2$}}
\put(17.5,2){\makebox(0,0){$2$}}
\put(2.5,0){\vector(1,0){1}}
\put(7.5,0){\vector(1,0){1}}
\put(12.5,0){\vector(1,0){1}}
\put(17.5,0){\vector(1,0){1}}
\put(27.5,1.75){\vector(2,-1){1}}
\put(27.5,-1.75){\vector(2,1){1}}
\put(22.5,1.75){\vector(2,1){1}}
\put(22.5,-1.75){\vector(2,-1){1}}

\put(5,0){\circle*{2}}
\put(15,0){\circle*{2}}
\put(25,2.5){\circle*{2}}
\put(25,-2.5){\circle*{2}}
\put(20.7,-0.7){\line(1,-1){4}}
\put(20.7,-0.7){\vector(1,-1){3}}
\put(30.6,-0.8){\line(1,-1){4}}
\put(30.8,-0.6){\line(2,-1){9}}
\put(30.9,-0.4){\line(3,-1){14}}
\put(30.6,-0.8){\vector(1,-1){2.5}}
\put(30.8,-0.6){\vector(2,-1){5}}
\put(30.9,-0.4){\vector(3,-1){7.5}}
\put(25,-5){\circle*{2}}
\put(35,-5){\circle*{2}}
\put(40,-5){\circle*{2}}
\put(45,-5){\circle*{2}}
\end{picture}
\end{center}
\caption{The result of applying Steps 1-3 to the floor diagram of Example~\ref{ex:floordiagram}}
\label{fig:markingAfterStep3}
\end{figure}

{\bf Step 4:} Linearly order the vertices of $\widetilde{\D}$
extending the order of the vertices of the original floor
diagram $\D$ such that, as before, each edge is directed from a
smaller vertex to a larger vertex.

The extended graph $\widetilde{\D}$ together with the linear order on
its vertices is called a \emph{marked floor diagram}, or a
\emph{marking} of the original floor diagram $\D$.

\begin{figure}[htbp]
\begin{center}
\begin{picture}(50,40)(65,-15)\setlength{\unitlength}{4pt}\thicklines
\put(12.5,2){\makebox(0,0){$2$}}
\put(17.5,2){\makebox(0,0){$2$}}
\multiput(0,0)(10,0){3}{\circle{2}}
\multiput(40,0)(10,0){1}{\circle{2}}
\multiput(5,0)(10,0){5}{\circle*{2}}
\multiput(30,0)(10,0){1}{\circle*{2}}
\multiput(50,0)(5,0){2}{\circle*{2}}
\put(1,0){\line(1,0){8}}
\put(11,0){\line(1,0){8}}
\put(2.5,0){\vector(1,0){1}}
\put(7.5,0){\vector(1,0){1}}
\put(12.5,0){\vector(1,0){1}}
\put(17.5,0){\vector(1,0){1}}
\put(21,0){\line(1,0){3}}
\put(22.5,0){\vector(1,0){1}}
\qbezier(20.6,0.6)(22,6)(25,6)\qbezier(25,6)(28,6)(29.4,0.6)
\put(25,6){\vector(1,0){1}}
\qbezier(20.8,-0.6)(23.75,-3)(27.5,-3)\qbezier(27.5,-3)(32.25,-3)(34.2,-0.6)
\put(27.5,-3){\vector(1,0){1}}
\qbezier(25.8,0.6)(28.75,3)(32.5,3)\qbezier(32.5,3)(37.25,3)(39.2,0.6)
\put(32.5,3){\vector(1,0){1}}
\put(36,0){\line(1,0){3}}
\put(37.5,0){\vector(1,0){1}}
\put(41,0){\line(1,0){3}}
\put(42.5,0){\vector(1,0){1}}
\qbezier(40.8,0.6)(42,3)(45,3)\qbezier(45,3)(48,3)(49.2,0.6)
\put(45,3){\vector(1,0){1}}
\qbezier(40.8,0.6)(43.5,5)(47.5,5)\qbezier(47.5,5)(51.5,5)(54.2,0.6)
\put(47.5,5){\vector(1,0){1}}
\end{picture}
\end{center}
\caption{A marking of the floor diagram of Figure~\ref{fig:markingAfterStep3}}
\end{figure}
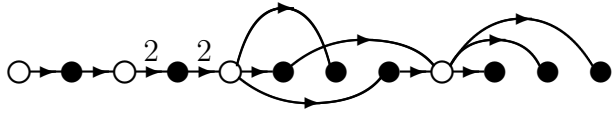
\end{definition}

We want to count marked floor diagrams up to equivalence. Two markings
$\widetilde{\D}_1$, $\widetilde{\D}_2$ of a floor diagram $\D$ are \emph{equivalent} if there exists an 
automorphism of weighted graphs which preserves the vertices of $\D$ and maps $\widetilde{\D}_1$ to
$\widetilde{\D}_2$. The \emph{number of markings} $\nu(\D)$ is the number of
marked floor diagrams $\widetilde{\D}$ up to equivalence.

\begin{example}
The floor diagram $\D$ of Example~\ref{ex:floordiagram} has $\nu(\D) =
3 + 4 = 7$ markings (up to equivalence):
 In step 3 the extra $1$-valent vertex connected to the third white vertex
from the left can be inserted in three ways between the third and fourth
white vertex (up to equivalence) and in four ways right of the fourth
white vertex (again up to equivalence).
\end{example}

With these two statistics, we define a purely combinatorial notion of
refined Severi degrees for $S = \PP^2$, $S = \F_m$, and
$S=\PP(1,1,m)$. The combinatorial invariants agree with the refined
Severi degree $N^{\Delta, \delta}(y)$ of
Section~\ref{sec:tropicalRefinedCounting}
(Theorem~\ref{thm:combRefinedEqualRefined}). They also agree
conjecturally with the
refined invariants of G\"ottsche and Shende if $S$ is smooth and the
line bundle is sufficiently ample (cf.\
Conjecture~\ref{conj:smoothToric} and Theorem~\ref{thm:refinedInvEqualsRefinedSeveri}).

See Remark~\ref{rmk:hTransverse} for a discussion how
to generalize to a much larger family of toric surfaces corresponding
to ``$h$-transverse'' $\Delta$.
Denote by $\FD(\Delta, \delta)$ the set of $\Delta$-floor diagrams $\D$ with cogenus $\delta$.

\begin{definition}
\label{def:combrefined}
Fix $\delta \ge 0$ and let $\Delta$ be as above.
We define the \emph{combinatorial
  refined Severi degree} $N_\comb^{\Delta, \delta}(y)$ to be the Laurent
polynomial in $y$ given by
\begin{equation}
\label{eqn:combrefined}
N_\comb^{\Delta, \delta}(y) = \sum_{\D \in \FD(\Delta, \delta)} \mult(\D; y)
\cdot \nu(\D).
\end{equation} 
\end{definition}

\begin{theorem}
\label{thm:combRefinedEqualRefined}
For $\Delta$ as in Definition~\ref{def:combrefined} and $\delta \ge
0$, the combinatorial refined Severi degree and the refined Severi
degree agree:
\[
N_\comb^{\Delta, \delta}(y) = N^{\Delta, \delta}(y).
\]
\end{theorem}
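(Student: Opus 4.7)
The plan is to exploit invariance of $N^{\Delta, \delta}(y)$ under choice of generic point configuration, specialize to a vertically stretched configuration, and then invoke (and refine) the bijection between such tropical curves and marked floor diagrams due to Brugall\'e--Mikhalkin~\cite{BM2}.

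First, by the corollary of \thmref{thm:refinedSeveriIndep}, we may compute $N^{\Delta, \delta}(y)$ using any tropically generic point configuration $\Pi \subset \RR^2$; choose $\Pi$ vertically stretched with respect to $\Delta$ in the sense of \defref{def:verticallyStretched}. Under this assumption, the floor decomposition technique~\cite{Brugalle_personal, BM2} produces a bijection
\[
\Phi : \bigl\{\,(C,h)\ \delta\text{-nodal tropical curves of degree }\Delta\ \text{through }\Pi\,\bigr\} \ \longleftrightarrow\ \bigl\{\,\text{marked }\Delta\text{-floor diagrams of cogenus }\delta\,\bigr\}/\!\sim,
\]
where $\sim$ is the equivalence of markings defined before \defref{def:combrefined}. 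Under $\Phi$, a tropical curve decomposes into horizontal ``elevators'' (the edges of the underlying floor diagram $\D$, equipped with the weight $\wt(e)$) and ``floors'' (the vertices $1,\ldots,d$ of $\D$); the extra marked points produced by Step~1 and Step~2 of \defref{def:marking} correspond to the unbounded ends and to the additional point-conditions, while Step~3--Step~4 encode the relative heights of the marked points. In particular, the number of preimages $\Phi^{-1}(\D)$ for a given unmarked floor diagram $\D$ is exactly the number of markings $\nu(\D)$.

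Second, and this is where the refinement enters, I will compare $\mult(C,h;y)$ with $\mult(\D;y)$ for a tropical curve $(C,h)$ and its associated (unmarked) floor diagram $\D = \Phi(C,h)_{\text{un}}$. In the vertically stretched regime, the dual subdivision $\Delta_{C}$ has a very restricted shape: all triangles are either of (normalized) area $1$ (the ``floor'' triangles and the triangles dual to trivalent vertices where an elevator meets the extra marked edges) or of area $\wt(e)$, where an elevator edge $e$ of weight $\wt(e)$ in $\D$ meets a floor. Crucially, each elevator edge $e$ of weight $\wt(e)$ gives rise to exactly \emph{two} such triangles of area $\wt(e)$ in $\Delta_C$ (one at its lower and one at its upper endpoint); all remaining triangles have area~$1$, contributing $[1]_y = 1$. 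Since
\[
\mult(C,h;y)=\prod_{v\ 3\text{-valent}}[\Area(\Delta_v)]_y,
\]
the factorization above yields
\[
\mult(C,h;y)=\prod_{\text{edges }e\text{ of }\D}\bigl([\wt(e)]_y\bigr)^2 \;=\; \mult(\D;y),
\]
matching \defref{def:refined_multiplicity_FD} exactly. I would establish the bookkeeping of the dual subdivision via the standard Newton polygon analysis of the vertical slicing used in~\cite{BM2}, being careful that the parallelograms in $\Delta_C$ (which correspond to the $\delta$ nodes) contribute no factor on either side.

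Third, summing the equality $\mult(C,h;y)=\mult(\Phi(C,h)_{\text{un}};y)$ over all $(C,h)$ through $\Pi$, and grouping the terms by the underlying floor diagram $\D$, gives
\[
N^{\Delta,\delta}(y)=\sum_{(C,h)}\mult(C,h;y)=\sum_{\D\in\FD(\Delta,\delta)}\nu(\D)\cdot\mult(\D;y)=N_{\comb}^{\Delta,\delta}(y),
\]
as desired.

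The main obstacle is the multiplicity-matching step: verifying that the dual subdivision of a tropical curve through a vertically stretched configuration has precisely the predicted triangles of area $\wt(e)$ (two per elevator edge) and triangles of area $1$ everywhere else. This is essentially combinatorial but requires care near floors with internal structure (for $\Sigma_m$ and $\PP(1,1,m)$, where the slopes at floor vertices need not be $0$ and $\pm 1$ but involve $m$). Once this is checked case by case for the three families of polygons listed before \defref{def:combrefined} (using the $h$-transversality of $\Delta$, so that the floor/elevator decomposition is available), the rest of the argument is formal.
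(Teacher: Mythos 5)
Your proposal is correct and follows essentially the same route as the paper: reduce to a vertically stretched configuration via the invariance statement, then invoke the Brugall\'e--Mikhalkin bijection of \cite[Proposition~5.9]{BM2} between tropical curves through such a configuration and marked $\Delta$-floor diagrams, observing that it preserves the refined ($y$-)multiplicity. The only difference is that you spell out why the multiplicities match (two triangles of area $\wt(e)$ per elevator edge, all other triangles of area $1$), a point the paper's proof simply asserts as ``$y$-weight preserving.''
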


\begin{proof}
Let $\Pi \subset \RR^2$ be a vertically stretched 
(Definition~\ref{def:verticallyStretched}) configuration of $|\Delta
\cap \ZZ^2| - 1 - \delta$ tropical points. 
In
\cite[Proposition~5.9]{BM2}, Brugall\'e and Mikhalkin construct an
explicit bijection between the set of parametrized tropical curves of
degree~$\Delta$ with $\delta$ nodes passing through $\Pi$ and the set
of marked $\Delta$-floor diagrams of cogenus $\delta$. This bijection
is $y$-weight preserving.
\end{proof}

In the sequel, we will usually write $N$ instead of $N_\comb$ even
while referring to the combinatorial defined refined Severi degree if
no confusion can occur.

\begin{remark}
\label{rmk:hTransverse}
We expect the results in this section to also hold for toric surfaces
from ``$h$-transverse'' polygons $\Delta$: Brugall\'e and
Mikhalkin~\cite{BM2} construct marked floor diagrams for this class of
polygons. One can define a notion of \emph{combinatorial refined Severi
degrees for any toric surface from an ``$h$-transverse polygon''}:
simply replace the multiplicity of a ``$\Delta$-floor diagram'' $\D$ in
\cite[Equation (Severi1)]{AB10} by the $y$-weight
\[
\mult(\D, y) = \prod_{\text{edges }e} \left(
[\wt(e)]_y \right)^2.
\]
Theorem~\ref{thm:combRefinedEqualRefined} can then be extended to the
more general setting. We omit the details here to avoid too many technicalities.
\end{remark}

\subsection{Templates}

The following gadget was introduced by Fomin and Mikhalkin~\cite{FM}.

\begin{definition}
\label{def:template}
A \emph{template} $\Gamma$ is a directed graph (possibly with multiple
edges) on vertices
$\{0, \dots, l\}$, where $l\ge 1$, with edge
weights $\wt(e) \in \ZZ_{>0}$, satisfying:
\begin{enumerate}
\item If $i \to j$ is an edge, then $i <j$.
\item Every edge $i \stackrel{e}{\to} i+1$ has weight $\wt(e) \ge
  2$. (No ``short edges.'')
\item For each vertex $j$, $1 \le j \le l-1$, there is an edge
  ``covering'' it, i.e., there exists an edge $i \to k$ with $i <j <k$.
\end{enumerate}
\end{definition}

Every template $\Gamma$ comes with some numerical
data associated with it. Its \emph{length} $\ell(\Gamma)$ is the number of
vertices minus $1$. Its \emph{cogenus} $\delta(\Gamma)$
is
\begin{equation}
\label{eqn:templatecogenus}
\delta(\Gamma) = \sum_{i \stackrel{e}{\to} j} \bigg[(j-i) \wt(e) -1
\bigg].
\end{equation}
We define its \emph{$y$-multiplicity} $\mult(\Gamma, y)$ to be
\[
\mult(\Gamma, y) = \prod_{\text{edges }e} \left(
[\wt(e)]_y \right)^2.
\]
See Figure~\ref{fig:templates} for examples.

For $1 \le j \le \ell(\Gamma)$, let $\varkappa_j = \varkappa_j(\Gamma)$
denote the sum of the weights of edges $i \to k$ with $i
< j \le k$. So $\varkappa_j(\Gamma)$ equals the total weight of the
edges of $\Gamma$ from a vertex left of $j$ to a vertex right of or
equal to $j$. Define
\[
k_{\min}(\Gamma) = \max_{1 \le j \le l}(\varkappa_j -j +1).
\]
This makes $k_{\min}(\Gamma)$ the smallest positive integer $k$ such
that $\Gamma$ can appear in a floor diagram on $\{1, 2, \dots \}$ with
left-most vertex $k$.
Lastly, set
 \begin{displaymath}
   \eps_0(\Gamma)  = \left\{
     \begin{array}{ll}
       1 &  \text{if all edges starting at } 0 \text{ have weight }1,\\
       0 &  \text{otherwise,}
     \end{array}
   \right.
\end{displaymath}
and
 \begin{displaymath}
   \eps_1(\Gamma)  = \left\{
     \begin{array}{ll}
       1 &  \text{if all edges arriving at } l \text{ have weight }1,\\
       0 &  \text{otherwise,}
     \end{array}
   \right.
\end{displaymath}
Figure~\ref{fig:templates} (taken from Fomin-Mikhalkin~\cite{FM})
shows all templates $\Gamma$ with $\delta(\Gamma) \le 2$.

Notice that, for each $\delta$, there are only a finite number of
templates with cogenus $\delta$.
At $y = 1$, we recover Fomin and Mikhalkin's template
multiplicity $\prod_e \wt(e)^2$. It is clear that
$\mult(\Gamma, y)$ is a Laurent polynomial with positive integral coefficients.

\begin{figure}[htbp]
\begin{center}
\begin{tabular}{c|c|c|c|c|c|c|c}
$\Gamma$ &
$\delta(\Gamma)$ & $\ell(\Gamma)$ 
& $\mult(\Gamma; y)$ 
& $\eps_0(\Gamma)$ & $\eps_1(\Gamma)$ & $\varkappa(\Gamma)$ & $k_{\min}(\Gamma)$
\\
\hline \hline
&&&&&&&\\[-.1in]
\begin{picture}(95,8)(-10,-4)\setlength{\unitlength}{2.5pt}\thicklines
\oo
\put(5,2){\makebox(0,0){$\scriptstyle 2$}}
\Eeee
\end{picture}
& 1 & 1 & $y^{-1}+2+y$ & 0 & 0 & (2) & 2
\\[.15in]
\begin{picture}(95,8)(-10,-4)\setlength{\unitlength}{2.5pt}\thicklines
\ooo
\qbezier(0.8,0.6)(10,5)(19.2,0.6)
\end{picture}
& 1 & 2 & 1 & 1 & 1 & (1,1) & 1
\\
\hline \hline
&&&&&&&\\[-.1in]
\begin{picture}(95,8)(-10,-4)\setlength{\unitlength}{2.5pt}\thicklines
\oo
\put(5,2){\makebox(0,0){$\scriptstyle 3$}}
\Eeee
\end{picture}
& 2 & 1 & $y^{-2}+2y^{-1}+3+2y+y^2$ & 0 & 0 & (3) & 3
\\[.15in]
\begin{picture}(95,8)(-10,-4)\setlength{\unitlength}{2.5pt}\thicklines
\oo
\put(5,3.5){\makebox(0,0){$\scriptstyle 2$}}
\put(5,-3.5){\makebox(0,0){$\scriptstyle 2$}}
\qbezier(0.8,0.6)(5,2)(9.2,0.6)
\qbezier(0.8,-0.6)(5,-2)(9.2,-0.6)
\end{picture}
& 2 & 1 & $y^{-2}+4y^{-1}+6+4y+y^2$ & 0 & 0 & (4) & 4
\\[.15in]
\begin{picture}(95,8)(-10,-4)\setlength{\unitlength}{2.5pt}\thicklines
\ooo
\qbezier(0.8,0.6)(10,4)(19.2,0.6)
\qbezier(0.8,-0.6)(10,-4)(19.2,-0.6)
\end{picture}
& 2 & 2 & 1 & 1 & 1 & (2,2) & 2
\\[.15in]
\begin{picture}(95,8)(-10,-4)\setlength{\unitlength}{2.5pt}\thicklines
\ooo
\qbezier(0.8,0.6)(10,4)(19.2,0.6)
\put(5,-2){\makebox(0,0){$\scriptstyle 2$}}
\Eeee
\end{picture}
& 2 & 2 & $y^{-1}+2+y$ & 0 & 1 & (3,1) & 3
\\[.15in]
\begin{picture}(95,8)(-10,-4)\setlength{\unitlength}{2.5pt}\thicklines
\ooo
\qbezier(0.8,0.6)(10,4)(19.2,0.6)
\put(15,-2){\makebox(0,0){$\scriptstyle 2$}}
\eEee
\end{picture}
& 2 & 2 & $y^{-1}+2+y$ & 1 & 0 & (1,3) & 2
\\[.15in]
\begin{picture}(95,8)(-10,-4)\setlength{\unitlength}{2.5pt}\thicklines
\oooo
\qbezier(0.8,0.6)(15,6)(29.2,0.6)
\end{picture}
& 2 & 3 & 1 & 1 & 1 & (1,1,1) & 1
\\[.15in]
\begin{picture}(95,8)(-10,-4)\setlength{\unitlength}{2.5pt}\thicklines
\oooo
\qbezier(0.8,0.6)(10,5)(19.2,0.6)
\qbezier(10.8,0.6)(20,5)(29.2,0.6)
\end{picture}
& 2 & 3 & 1 & 1 & 1 & (1,2,1) & 1
\\[-.05in]
\end{tabular}
\end{center}
\caption{The templates with $\delta(\Gamma) \le 2$.}
\label{fig:templates}
\end{figure}

\subsection{Decomposition into Templates}
\label{sec:decomposition}

A labeled floor diagram $\D$ with $d$ vertices decomposes into an ordered
collection $(\Gamma_1, \dots, \Gamma_s)$ of templates as
follows. If $S = \PP^2$ or $\PP(1,1,m)$, then we set as before $c = 0$.
We treat $S = \PP^2$ as the special case of $\PP(1,1,m)$ for
$m = 1$.

First, add an additional vertex $0$ ($< 1$) to $\D$ and connect it to
every vertex $j$ of $\D$ by $s_j$ many new edges of weight $1$ from
$0$ to $j$ for each $1 \le j \le d$. (For $S = \PP^2$ and $S=\PP(1,1,m)$, there is nothing to
do, as $s_j=0$ for all $j$.) Second, add an additional vertex $d+1$
($> d$), together with $m+s_j-\dive(j)$ new edges of weight $1$ from
$j$, for each $1 \le j \le d$. The \emph{divergence sequence} of the
resulting diagram $\D'$ is $\aa := (c, m, \dots, m) \in \ZZ_{\ge 0}^{d+1}$,
after we remove the (superfluous) last entry.
Now remove all \emph{short edges}
from $\D'$, that is, all edges of weight~$1$ between consecutive
vertices. The result is an ordered collection of templates $(\Gamma_1,
\dots, \Gamma_s)$, listed 
left to right. We also keep track of the initial vertices $k_1, \dots,
k_s$ of these templates.

Conversely, given the collection of templates $\GGamma = (\Gamma_1,
\dots, \Gamma_s)$,
 the initial vertices $k_1, \dots, k_s$, and the
divergence sequence $(c, m, \dots, m) \in \ZZ_{\ge 0}^{d+1}$, this
process is easily reversed. To recover $\D'$, we first place the
templates at their starting points $k_i$ in the interval $[0, \dots,
M]$, and add in all short edges we removed from $\D'$. More precisely, 
we need to add $(a_0 + \cdots + a_{j-1} - \varkappa_{k_j- j}(\Gamma_i))$
short edges between $j-1$ and $j$, where $\Gamma_i$ is the template
containing $j$. The sequence $s$ records the number $s_j$ of edges
between vertices $0$ and $j$.
Finally, we remove the first and last vertices and their incident
edges to obtain $\D$.

\begin{example}
\label{ex:templatedecomposition}
An example for $S = \PP^2$ of the decomposition of a labeled floor diagram into
templates is illustrated below. Here, $k_1 = 2$ and $k_2 = 4$ and all
$s_j = 0$.

\begin{picture}(50,121)(-165,-103)\setlength{\unitlength}{3.0pt}\thicklines
\ooooo\Eeee\eEee\eeEe \eeOe \eeeE 
\put(15,2){\makebox(0,0){$2$}}
\put(7,0){\vector(1,0){1}}
\put(17,0){\vector(1,0){1}}
\put(27,0){\vector(1,0){1}}
\put(27.5,1.75){\vector(2,-1){1}}
\put(27.5,-1.75){\vector(2,1){1}}
\put(35,2){\makebox(0,0){$3$}}
\put(37,0){\vector(1,0){1}}
\put(20,-8){\LARGE$\updownarrow$}
\put(-13,-1){$\D = $}
\end{picture}
\begin{picture}(50,121)(-111,-55)\setlength{\unitlength}{3pt}\thicklines
\oooooo\Eeee\eEee\eeEe \eeOe \eeeE \eeeBB \eeeeE \eeeeO
\put(15,2){\makebox(0,0){$2$}}
\put(7,0){\vector(1,0){1}}
\put(17,0){\vector(1,0){1}}
\put(27,0){\vector(1,0){1}}
\put(27.5,1.75){\vector(2,-1){1}}
\put(27.5,-1.75){\vector(2,1){1}}
\put(35,2){\makebox(0,0){$3$}}
\put(37,0){\vector(1,0){1}}
\put(47,0){\vector(1,0){1}}
\put(40,5){\vector(1,0){1}}
\put(47.5,1.75){\vector(2,-1){1}}
\put(47.5,-1.75){\vector(2,1){1}}
\put(20,-8){\LARGE$\updownarrow$}
\put(-13,-1){$\D' = $}
\end{picture}
\begin{picture}(50,121)(-57,-10)\setlength{\unitlength}{3pt}\thicklines
\oooooo \eEee  \eeeE \eeeBB 
\put(-3,-1){\big (}
\put(1,-1){\big )}
\put(15,2){\makebox(0,0){$2$}}
\put(17,0){\vector(1,0){1}}
\put(35,2){\makebox(0,0){$3$}}
\put(37,0){\vector(1,0){1}}
\put(40,5){\vector(1,0){1}}
\put(-22,-1){$(\Gamma_1, \Gamma_2) = $}
\end{picture}
\end{example}

We record, for each ordered template collection $\GGamma =
(\Gamma_1, \dots, \Gamma_s)$, 
all valid ``positions'' $k_i$ that
can occur in the template decomposition of a $\Delta$-floor diagram by
the lattice points in a polytope. There are two cases.
If $S = \PP^2$, we set
\begin{equation}
\label{eqn:P2polytope}
\begin{split}
A_{\GGamma}(d) =  \{ &\kk \in \RR^s \, : \, k_i \ge
k_{\min}(\Gamma_i),\\
& 
k_i + \ell(\Gamma_i)  \le k_{i+1} \, \, \,  ( 1 \le i < s), \, k_s +
\ell(\Gamma_s) \le d + \eps_1(\Gamma_s) \}.
\end{split}
\end{equation}
If $S = \PP(1,1,m)$ or $S = \F_m$, we set
\begin{equation}
\label{eqn:FmAndP111polytope}
\begin{split}
A_{\GGamma}(d) =  \{ &\kk \in \RR^s \, : 
\, k_1 \ge 1 -\eps_0(\Gamma_1),  \\
& 
k_i + \ell(\Gamma_i)  \le k_{i+1} \, \, \,  ( 1 \le i < s), \, k_s +
\ell(\Gamma_s) \le d + \eps_1(\Gamma_s) \}.
\end{split}
\end{equation}
The first inequality in (\ref{eqn:P2polytope}) says that, due to the divergence condition, templates cannot appear too early in a
floor diagram. The first inequality in (\ref{eqn:FmAndP111polytope})
says that the first starting position can be $0$ precisely when all
outgoing edges of the first vertex of $\Gamma_1$ have weight $1$.
The second resp.\ third inequality in (\ref{eqn:P2polytope}) and (\ref{eqn:FmAndP111polytope}) say that templates
cannot overlap resp.\ cannot hang over at the end of the floor
diagram.

We note that the lattice points in $A_{\GGamma}(d)$ in
(\ref{eqn:FmAndP111polytope}) record all template positions if
the divergence at the first vertex is at least $2\delta$: 
the quantity $\varkappa_j(\Gamma)$ is maximal, for a given
$\delta(\Gamma) = \delta$,
when $\Gamma$ is the 
template with two vertices and $\delta$ edges between them, each with
weight $2$, and $j=1$. The condition $\text{div}(1) \ge 2 \delta$ implies
then that every collection of lattice points in the polytope can be the
  sequence of positions of templates, and vice versa.
We always make the assumption $\text{div}(1) \ge 2 \delta$ in
Section~\ref{sec:refinednodepolys}, where we prove polynomiality
of the refined Severi degrees for parameters in this regime (cf.
Theorem~\ref{thm:refinedSeveridegreepoly}).

\subsection{Multiplicity, Cogenus, and Markings.}
The refined multiplicity, cogenus, and markings of a floor diagram behave well
under template decomposition, as in the unrefined case. If a floor
diagram $\D$ has template decomposition $\GGamma$, then by definition
\[
\mult(\D;y) = \prod_{i=1}^s \mult(\Gamma_i; y).
\]
Furthermore, the decomposition of Section~\ref{sec:decomposition} is
cogenus preserving, i.e., $\delta(\D) = \sum_{i=1}^s \delta(\Gamma_i)$  (see~\cite[Section~3.3.2]{AB10}).
The number of markings of floor diagrams is expressible in terms of
the number of ``markings of the templates'': In Step~4 in
Definition~\ref{def:marking}, instead of linearly ordering
$\tilde{\D}$, we can order each template individually. To make this
precise, associate to each template $\Gamma$ a polynomial
$P_\Gamma(c,m; k)$ in $k$ which depends also on the parameters $c$ and
$m$ of the polygon $\Delta$ (cf.\ Figure~\ref{fig:2polygons}). Specifically, let
$\Gamma_{(c,m,k)}$ denote the graph obtained from $\Gamma$ by first
adding
\[
c + (k+j-1)m - \varkappa_j(\Gamma)
\]
short edges, making the divergence of all vertices $m$,  and then
subdividing each  of the resulting graphs by introducing a new
vertex for each edge. Let $P_\Gamma(c,m;k)$ be the number of linear
extensions, up to equivalence, of the vertex poset 
of the graph
$\Gamma_{(c,m,k)}$ extending the vertex order of $\Gamma$. Then
\[
\nu(\D) = \prod_{i=1}^s P_{\Gamma_i}(c,m;k_i).
\]

We can summarize the previous discussion in the following proposition.

\begin{proposition}
The combinatorial refined Severi degree for
\begin{enumerate}
\item $S = \PP^2$, any $\delta \ge 1$ and $d \ge 1$, or 
\item $S = \F_m$ resp.\ $S =\PP(1,1,m)$, $\delta \ge 1$ and $m,c,d \ge
  1$ with $c+m \ge 2 \delta$
\end{enumerate}
is given by
\begin{equation}
\label{eqn:refinedseveridiscreteintegral}
N_\comb^{\Delta,\delta}(y) = \sum_{\GGamma: \, \sum_i \delta(\Gamma_i) = \delta}
\left[ \Big( \prod_{i=1}^s \mult(\Gamma_i, y) \Big)
\sum_{\kk \in A_{\GGamma}(d) \cap \ZZ^s} \Big( \prod_{i = 1}^s 
  \ P_{\Gamma_i}(c,m; k_i) \Big) \right],
\end{equation}
the first sum running over all templates collections $\GGamma =
(\Gamma_1, \dots, \Gamma_s)$ with $\sum_{i = 1}^s \delta(\Gamma_i) =
\delta$.

If $S = \F_m$ one can relax condition $m \ge 2 \delta$ to $m+c \ge 2 \delta$.
\end{proposition}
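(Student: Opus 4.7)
The plan is to unpack the definition
\[
N_\comb^{\Delta,\delta}(y) \;=\; \sum_{\D \in \FD(\Delta,\delta)} \mult(\D;y)\cdot \nu(\D)
\]
and re-index the sum via the template decomposition described in Section~\ref{sec:decomposition}. Concretely, every $\Delta$-floor diagram $\D$ (after adjoining the auxiliary vertices $0$ and $d+1$ to form $\D'$ and deleting the short edges) determines an ordered collection $\GGamma = (\Gamma_1,\dots,\Gamma_s)$ of templates together with starting positions $\kk = (k_1,\dots,k_s)$, and this assignment is a bijection onto its image. So the sum over $\D$ becomes a double sum over $\GGamma$ and over admissible $\kk$.

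Next, I would verify that each of the three statistics $\mult(\cdot;y)$, $\delta(\cdot)$, and $\nu(\cdot)$ factors compatibly along the decomposition. Multiplicativity of $\mult(\D;y) = \prod_i \mult(\Gamma_i;y)$ is immediate from Definitions~\ref{def:refined_multiplicity_FD} and of template multiplicity, since short edges have weight $1$ and so contribute a factor $[1]_y^2 = 1$. Additivity of the cogenus, $\delta(\D) = \sum_i \delta(\Gamma_i)$, is the refined analogue of \cite[Section~3.3.2]{AB10} and follows from the definition \eqref{eqn:templatecogenus} together with the observation that short edges contribute zero. For the markings, the point is that the Step~4 linear extension in Definition~\ref{def:marking} can be done independently on each template segment once the divergences are normalized to $m$ by the short-edge padding; this is exactly what $P_{\Gamma_i}(c,m;k_i)$ counts, yielding $\nu(\D) = \prod_i P_{\Gamma_i}(c,m;k_i)$.

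The only genuinely non-formal step is identifying the set of admissible position vectors $\kk$ with the lattice points $A_\GGamma(d)\cap\ZZ^s$ defined in \eqref{eqn:P2polytope} resp.~\eqref{eqn:FmAndP111polytope}. The upper bounds in both polytopes (templates do not overlap and do not extend past vertex $d$, up to the $\eps_1$ correction) are forced by the definition of a $\Delta$-floor diagram on $\{1,\dots,d\}$, and the lower bound on $k_1$ in the $\F_m/\PP(1,1,m)$ case is forced by the requirement that $\Gamma_1$ begin no earlier than vertex $1-\eps_0(\Gamma_1)$. For $\PP^2$ the lower bound $k_i \ge k_{\min}(\Gamma_i)$ enforces nonnegativity of the divergences at each internal vertex of $\Gamma_i$; in the Hirzebruch/weighted-projective case, the divergence at the first vertex is boosted by $c + (k_1-1)m$, and the hypothesis $c+m \ge 2\delta$ guarantees that this boost already exceeds $\max_\Gamma \varkappa_1(\Gamma) \le 2\delta(\Gamma)$, so \emph{every} lattice point of $A_\GGamma(d)$ produces a legal floor diagram (and conversely). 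This is the one place where the numerical hypothesis $c+m\ge 2\delta$ is used, and it is the main obstacle---without it one would have to intersect $A_\GGamma(d)$ with additional template-specific half-spaces, breaking the clean product form.

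Assembling the three factorizations and the parametrization of admissible $\kk$ by $A_\GGamma(d)\cap\ZZ^s$, the sum over $\D$ with fixed cogenus $\delta$ reorganizes as
\[
\sum_{\GGamma:\,\sum_i\delta(\Gamma_i)=\delta}\Big(\prod_{i=1}^s\mult(\Gamma_i;y)\Big)\!\!\sum_{\kk\in A_\GGamma(d)\cap\ZZ^s}\prod_{i=1}^s P_{\Gamma_i}(c,m;k_i),
\]
which is precisely \eqref{eqn:refinedseveridiscreteintegral}. The finiteness of the outer sum is Fomin--Mikhalkin's observation that there are only finitely many templates of a given cogenus. The mild improvement from $m\ge 2\delta$ to $m+c\ge 2\delta$ in the $\F_m$ case uses that the divergence at the first vertex of $\D'$ is $c+m$, so the same argument goes through verbatim.
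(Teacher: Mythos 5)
Your argument is correct and is essentially the paper's: the paper gives no separate proof, stating that the proposition summarizes the preceding discussion of the template decomposition and of how multiplicity, cogenus and markings factor, which is exactly what you write out, including the use of $c+m\ge 2\delta$ to identify the admissible position vectors with the lattice points of $A_{\GGamma}(d)$. One harmless slip: the divergence available at the first gap of the template placed at position $k_1$ is $c+k_1 m$ (not $c+(k_1-1)m$), which is what makes the bound $\varkappa_1(\Gamma_1)\le 2\delta\le c+m$ suffice for $k_1\ge 1$.
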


For $y = 1$ and $S = \PP^2$, expression (\ref{eqn:refinedseveridiscreteintegral})
specializes to \cite[(5.13)]{FM}. For $y = 1$ and $S = \F_m$ resp.\
$S=\PP(1,1,m)$, expression (\ref{eqn:refinedseveridiscreteintegral})
specializes to \cite[Proposition~3.3]{AB10}.

\section{Polynomiality Proofs}
\label{sec:refinednodepolys}

We now use floor diagrams and templates to prove
Theorem~\ref{thm:refinedSeveridegreepoly}
and
Proposition~\ref{prop:leadingCoefficients}. 
The argument
for the former
 is based on the combinatorial formula
(\ref{eqn:refinedseveridiscreteintegral}). 
Our technique is a
$q$-analog extension of Fomin and Mikhalkin's
method \cite[Section~5]{FM} for the $\PP^2$ and Ardila and Block's
\cite{AB10} for $\F_m$ and $\PP(1,1,m)$.
The method provides an algorithm
to compute refined node polynomials for any $\delta$; see
Remark~\ref{rem:smallrefinedpolys} for a list for $\delta \le 2$ for
$\PP^2$.

\newtheorem*{thm:refinedSeveridegreepoly}{Theorem \ref{thm:refinedSeveridegreepoly}}
\begin{thm:refinedSeveridegreepoly}
For fixed $\delta \ge 1$:
\begin{enumerate}
\item ($\PP^2$) There is a polynomial $N_\delta(d; y) \in \QQ[y^{\pm
    1}][d]$  of degree $2\delta$ in $d$ such that, for $d \ge \delta$,
\[
N_\delta(d;y) = N^{d, \delta}(y).
\]

\item (Hirzebruch surface) There is a polynomial $N_\delta(c, d,m;y)
 \in \QQ[y^{\pm 1}][c,d,m]$ of degree $\delta$ in $c,m$ and $2\delta$
 in $d$ such that, for 
$c + m \ge 2 \delta$ and $d \ge \delta$
\[
N_\delta(c, d,m;y) = N^{(\F_m, cF+dH), \delta}(y).
\]

\item ($\PP(1, 1, m)$) There is a polynomial $N_\delta(d,m;y) \in
  \QQ[y^{\pm 1}][d,m]$ of degree $2\delta$ in $d$ and $\delta$ in $m$
  such that, for  $d \ge \delta$   and $m \ge 2 \delta$,
\[
N_\delta(d,m;y) = N^{\PP(1,1,m),dH), \delta}(y).
\]
\end{enumerate}
\end{thm:refinedSeveridegreepoly}

\begin{proof}[Proof of Theorem~\ref{thm:refinedSeveridegreepoly}]
The proof for $S = \PP^2$ is essentially the proof of
\cite[Theorem~5.1]{FM}, suped-up with refined multiplicities. For $S =
\Sigma_m$ and $S=\PP(1,1,m)$ our argument is a special (but now
refined) case of the proof of \cite[Theorem~1.2]{AB10}.
We first want to show that, for $S = \PP(1,1,m)$ resp.\ $S=\Sigma_m$ and
fixed $\delta$, the
expression in (\ref{eqn:refinedseveridiscreteintegral}) is polynomial 
in $d$ and $m$ resp.\ $c$, $d$ and $m$ for appropriately large values
of $c$, $d$ and $m$. As before, for $S = \PP(1,1,m)$, we set $c =
0$. The case $S = \PP^2$ we treat at the end.

The number of template collections $\GGamma = (\Gamma_1, \dots,
\Gamma_s)$ with fixed cogenus $\sum_{i=1}^s \delta(\Gamma_i) = \delta$
is finite. The factor $\prod_{i=1}^s \mult(\Gamma_i, y)$ is simply a
Laurent polynomial in $y$; it thus remains to show that the second sum in
(\ref{eqn:refinedseveridiscreteintegral}) is polynomial for
appropriately large $d$ and $m$, and also $c$ if $S = \Sigma_m$.

Since for each template $\Gamma_i$ and any $j$, we have
$\varkappa_j(\Gamma_i) \le 2\delta \le c + 
m$,  each individual template $\Gamma_i$ can ``float freely''
between $k_i = \eps_0(\Gamma_i)$ and $d-\ell(\Gamma_i) +
\eps_1(\Gamma_i)$. Thus, as $c+m \ge 2\delta$, the valid starting
positions $k_i$ of all templates are given by the inequalities of
$A_\GGamma(d)$ as in (\ref{eqn:FmAndP111polytope}).

If $d \ge \delta$ then $A_\GGamma(d)$ is non-empty as
\[
\eps_0(\Gamma_1)+ \ell(\Gamma_1) + \cdots + \ell(\Gamma_s) - \eps_1(\Gamma_s)
\le \delta.
\]
In fact, the combinatorial type of $A_\GGamma(d)$ does not change if
$d \ge \delta$: it is always combinatorially equivalent to a simplex. The
inequalities are given by $A \cdot \kk \le b(d)$ for a unimodular matrix
$A$ and a vector $b(d)$ of linear forms in $d$.

For each lattice point $(k_1, \dots, k_s)$ in $A_\GGamma(d)$, the
number of markings $P_{\Gamma_i}(c,m; k_i)$ of $\Gamma_i$ at position
$k_i$ is polynomial in $k_i, c$ and $m$ provided that $c+m \ge 2\delta$
\cite[Lemma~5.8]{FM}. Thus, for $\kk \in A_\GGamma(d) \cap \ZZ^s$,
\begin{equation}
\label{eqn:markings_of_template_collection}
\prod_{i=1}^s P_{\Gamma_i}(c,m;k_i)
\end{equation}
is a polynomial in $c,m,k_1, \dots k_s$. From the explicit description
of $P_{\Gamma_i}(c,m;k_i)$, it is not hard to see that the degree of
$P_{\Gamma_i}(c,m;k_i)$ in $k_i$, in $c$, and in $m$ is bounded above
by the number of edges of $\Gamma_i$ and thus by
$\delta(\Gamma_i)$. Hence, if $c+m \ge 2 \delta$, the number
(\ref{eqn:markings_of_template_collection}) of  markings of the
template collection $\GGamma$ is of degree at most $\delta$ in $c$ and
 in $m$, and at most $\delta(\Gamma_i)$ in $k_i$.

By \cite[Lemma~4.9]{AB10}, the second sum in
(\ref{eqn:refinedseveridiscreteintegral}) is a piecewise polynomial in
$c$, $d$, and $m$: the second sum is a ``discrete integral'' of a
polynomial over the facet-unimodular polytope $A_\GGamma(d)$.
But for $c+m \ge 2\delta$ and $d \ge \delta$, the combinatorial type
of $A_\GGamma(d)$ does not change, $A_\GGamma(d)$ is a dilation of a
unit simplex by the (non-negative) number
\[
d -\left( \eps_0(\Gamma_1)+ \ell(\Gamma_1) + \cdots + \ell(\Gamma_s) -
\eps_1(\Gamma_s) \right).
\]
Hence the second sum in (\ref{eqn:refinedseveridiscreteintegral}) is
polynomial in $c$, $d$, and $m$ for $c+m \ge 2 \delta$ and $d \ge
\delta$. This polynomial is of degree at most $\delta$ in $c$ and in
$m$. As the number $s$ of templates in the template collection
$\GGamma$ is bounded by $\delta$, we (discretely) integrate over at
most $\delta$ dimensions in  (\ref{eqn:refinedseveridiscreteintegral})
and thus the degree of the refined Severi degree in $d$ is at most
$\delta(\Gamma_1) + \cdots + \delta(\Gamma_s) + s \le 2 \delta$.

To conclude the result for $S=\PP(1,1,m)$ set $c = 0$.

For $S = \PP^2$, the proof is identical to the proof of
\cite[Theorem~1.3]{FB}; we only need to replace $\mult(\Gamma_i)
(=\mult(\Gamma_i; 1))$ by $\mult(\Gamma_i; y)$ throughout (e.g.,
(\ref{eqn:refinedseveridiscreteintegral}) at $y=1$ becomes
\cite[(3.1)]{FB}). The proof to further reduce the threshold value for
polynomiality in $d$ of $N^{d, \delta}(y)$ from $2\delta$ to $\delta$
(as in the theorem) relies on another statistic ``$s(\Gamma)$''
\cite[p.~13]{FB}. The two key Lemmas~4.2 and~4.3 of \cite{FB} only
involve the markings of a floor diagram and are thus verbatim in the
refined case. The degree bound follows as in the case of $\PP(1,1,m)$
(with $m=1$). For $S=\PP^2$, the degree bound $2\delta$ in $d$ is
tight: a template collection $\GGamma$ with each $\Gamma_j$ a
template with $\delta(\Gamma_j) = 1$ for $1 \le j \le \delta$
contributes to $N_\delta(d; y)$ in degree $2 \delta$ in $d$.
\end{proof}

\begin{remark}
\label{rem:smallrefinedpolys}
Expression (\ref{eqn:refinedseveridiscreteintegral}) gives, in
principle, an algorithm to compute refined node polynomials. The
algorithm of \cite[Section~3]{FB}, based on 
the algorithm of Fomin and Mikhalkin~\cite[Section~5]{FM}, easily
adapts to the refined case. Below we show $N_\delta(d; y)$, for $S =
\PP^2$ and for $\delta \le 2$ as computed by this method. 
(Note that \thmref{thm:refinedInvEqualsRefinedSeveri} determines (by another method) the $N_\delta(d; y)$ for $\delta\le 10$.)
\[
\small
\begin{split}
N_1(d; y) = & \, \tfrac{1}{2}y d^2-\tfrac{3}{2}y d+y +2 d^2-3
d+1+\tfrac{1}{2} y^{-1} d^2- \tfrac{3}{2}y^{-1} d+y^{-1},\\
N_2(d; y) = & \, \tfrac{1}{8} y^2 d^4-\tfrac{3}{4} y^2
d^3+\tfrac{11}{8} y^2 d^2-\tfrac{3}{4} y^2 d+y d^4-\tfrac{9}{2} y
d^3+2 y d^2+\tfrac{21}{2} y d-9 y+\tfrac{9}{4}  d^4-\tfrac{15}{2}  d^3
\\
&-\tfrac{3}{4}  d^2+21 d-15+ y^{-1}
d^4-\tfrac{9}{2} y^{-1} d^3+2 y^{-1} d^2+\tfrac{21}{2} y^{-1} d-9 y^{-1}+\tfrac{1}{8}
y^{-2} d^4 \\
& -\tfrac{3}{4} y^{-2} d^3 +\tfrac{11}{8} y^{-2} d^2-\tfrac{3}{4} y^{-2} d.\\
\end{split}
\]
\end{remark}

\begin{proof}[Proof of Proposition~\ref{prop:leadingCoefficients}]
To a floor diagram $\D$, we associated the new statistic
\[
i(\D) = \sum_{e \in \D} \wt(e) \left( \len(e) - 1\right).
\]
It captures how much of the cogenus is contributed by edges of length
greater than $1$.
By degree considerations, one can see that a floor diagram $\D$
contributes only to the coefficients $p^\delta_{d,i}$ of $N^{d, \delta}(y)$
with $i(\D) \le i$. 
To compute $p^\delta_{d,0}$, it thus suffices to consider
only the floor diagrams of degree $d$ with cogenus $\delta$ and $i(\D)
= 0$. Furthermore, each such floor diagram has $\mult(\D; y)$ a degree
$\delta$ polynomial in $y$ and $y^{-1}$ with leading coefficient
$1$. It, thus, suffices to show that the number of \emph{marked} floor diagrams with
$d(\D) = d$, $\delta(\D) = \delta$, and $i(\D) = 0$ equals $\binom{\binom{d-1}{2}}{\delta}$. 

Each such marked floor diagram arises as follows: let $\D_0$ be the
unique floor diagram of degree $d$ and cogenus $0$ ($\D_0$ has one
edge of weight $1$ between vertex $1$ and $2$, two edges of weight $1$
between vertex $2$ and $3$, and so on). The genus of $\D_0$ is
$\binom{d-1}{2}$. Subdivide each edge of $\D_0$ by introducing a new vertex and
  order all vertices linearly, extending the linear order of the $d$
  original vertices. Call a cycle in $\D_0$ of length $2$
  \emph{contractible} if the two midpoints corresponding to the two
  edges are adjacent in the linear order.
Choose $\delta$ contractible cycles and ``contract'' each
  cycle by  identifying the two edges and the two midpoints to obtain the
  graph $\D_1$. To each
 edge in $\D_1$ assign a weight equal to the number of edges of
  $\D_0$ that were identified in obtaining $\D_1$. Note that $\D_1$
  comes with a linear order on its vertices and is, thus, a marked
  floor diagram with $\delta(\D_1) = \delta$ and $i(\D_1) = 0$, and
  all such marked floor diagrams arise this way.
\end{proof}

\section{Refined Relative Severi Degrees}
\label{sec:RefRelSevDegs}

In this section, we generalize refined Severi degrees to include tropical
tangency conditions. We then show that, in the case of the surfaces $S
= \Sigma_m$ and $S = \PP(1,1,m)$, the resulting invariants satisfy the
recursion of G\"ottsche and Shende (Definition~\ref{refCHrec})
and thus both invariants agree. Our definitions are a refinement of
\cite{GM052} for $S = \PP^2$ and \cite{IKS09} for arbitrary toric surfaces. 

Throughout this section, $\alpha = (\alpha_1, \alpha_2,\dots)$ and
$\beta = (\beta_1, \beta_2, \dots)$ denote infinite sequences of
non-negative integers with only finitely many non-zero entries. Recall
the notations $|\alpha| = \sum_{i\ge 1} \alpha_i$ and $I \alpha =
\sum_{i \ge 1} i \alpha_i$.

\begin{figure}[htbp]
 \begin{tabular}{cc}
\includegraphics[scale=0.30]{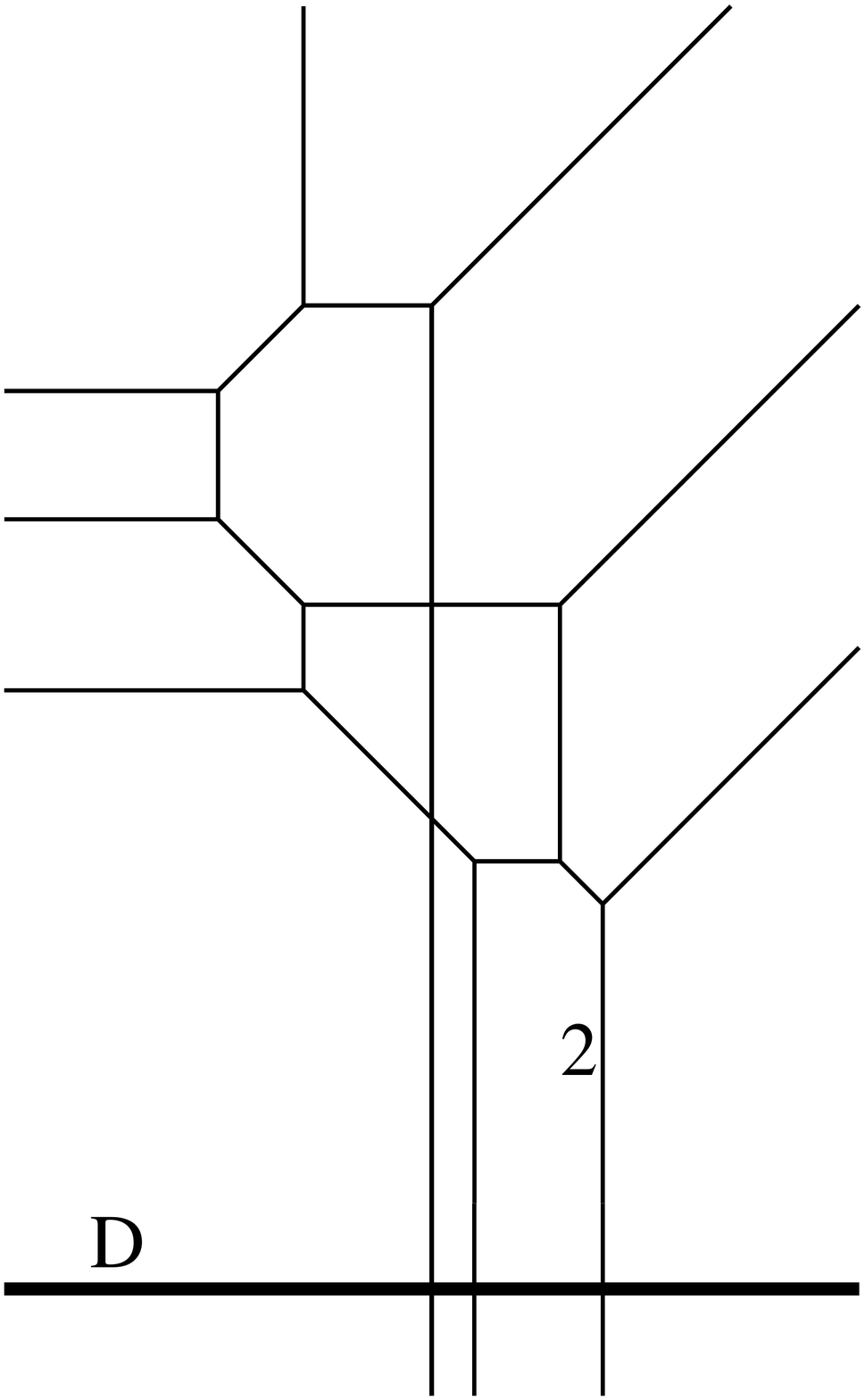} \quad \quad \quad
&
\quad
\raisebox{25pt}{
\includegraphics[scale=0.20]{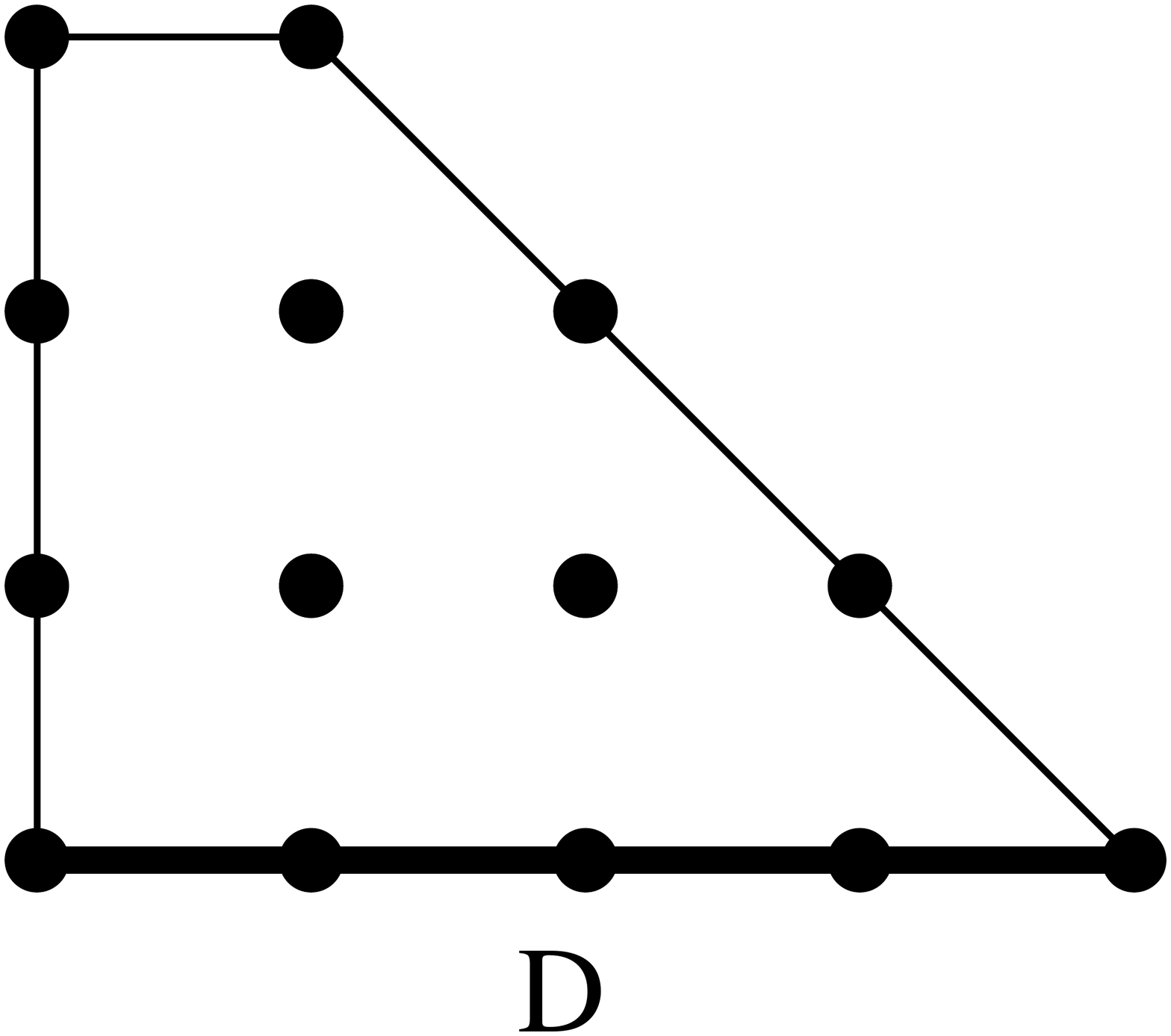}} \\
 \end{tabular}
\caption{A tropical curve $C$ with tangency $\beta = (2,1)$ to a tropical divisor
  $D$. The tropical divisor corresponds to the bottom horizontal edge
  (also denoted $D$) of
 the polygon $\Delta$ of the Hirzebruch surface $\Sigma_1$.}
\label{fig:tropicalBoundaryDivisor}
\end{figure}

\begin{definition}
Let $\Delta$ be a lattice polygon and $h: C \to \RR^2$ a
parametrized tropical curve of degree $\Delta$ (see
Definition~\ref{def:parametrizedTropicalCurve}). Again we simply write
$C$ instead of $h: C \to \RR^2$. Let $D$ be an edge of
$\Delta$ and $l(D)$ its lattice length.
\begin{enumerate}
\item The \emph{tropical boundary
  divisor} of $D$ is a (classical) line in $\RR^2$ parallel to $D$
and sufficiently
far in the
direction dual to $D$ (so that all intersections with $C$ are
orthogonal). Abusing notation, we denote the tropical boundary
divisor by $D$ also.

\item We say that the tropical curve $C$ is \emph{tangent to $D$
    of order $\beta$} if the partition of edge weights of the
  unbounded edges of $C$ orthogonal to $D$ is $\beta$. (I.e., if
  there are $\beta_1$ such edges of weight $1$, $\beta_2$ of weight
  $2$, and so on.)
\end{enumerate}
\end{definition}

See Figure~\ref{fig:tropicalBoundaryDivisor} for an example.
Throughout, we fix the following data:
\begin{enumerate}
\item a tropical boundary divisor $D$ (corresponding to an edge $D$ of
  $\Delta$),
\item two sequences $\alpha$ and $\beta$ with $I \alpha + I \beta$
  equal the lattice length $l(D)$ of the edge $D$, and 
\item a tropically generic
point configuration $\Pi$ of $n= |\Delta \cap \ZZ^2| -1 -\delta -I(\alpha+\beta) +
|\alpha| + |\beta|$ points with precisely $|\alpha|$ points on
$D$.
\end{enumerate}
The number of points $n$ is chosen so that the resulting curve count
is non-zero and finite (unless $\delta$ is very large). 

As in the classical case, we distinguish two types of tangencies:
tangencies to $D$ at a \emph{fixed} point (i.e., a point in $\Pi$), the number of such of
multiplicity $i$ we denote by $\alpha_i$. The other type of tangency
to $D$ is at unspecified or \emph{free} points; we denote the number
of such of multiplicity $i$ by $\beta_i$. The following is a
refinement of \cite[Definition~4.1]{GM052}.

\begin{definition}
\begin{enumerate}
\item A tropical curve $C$ passing through $\Pi$ is
  \emph{$(\alpha, \beta)$-tangent to $D$} if precisely $\alpha_i+\beta_i$ 
unbounded edges of $C$ are orthogonal to and intersect $D$ and have multiplicity
$i$ and, further,
$\alpha_i$ of the edges pass through $\Pi \cap D$.

\item The subdivision of $\Delta$ dual to the tropical curve $C$
  is the \emph{combinatorial type} of $C$.

\item The \emph{refined relative multiplicity}
  $\mult_{\alpha,\beta}(C;y)$ of a tropical curve
  $(\alpha,\beta)$-tangent to $D$ is
\begin{equation}
\label{eqn:refinedRelativeCurveMult}
\mult_{\alpha,\beta}(C;y) = \frac{1}{\prod_{i \ge 1}
  ([i]_y)^{\alpha_i}} \cdot \mult(C;y).
\end{equation}

\item The \emph{refined relative Severi degree $N^{\Delta,\delta}(\alpha,
  \beta)(y)$} is the number of $\delta$-nodal
tropical curves $C$ of degree $\Delta$ passing through $\Pi$ that
are $(\alpha, \beta)$-tangent to $D$, counted with multiplicity
$\mult_{\alpha,\beta}(C;y)$.

\item The \emph{refined relative irreducible Severi degree $N_0^{\Delta,\delta}(\alpha,
  \beta)(y)$} is the number of irreducible 
tropical curves $C$ of degree $\Delta$ with $\delta$  nodes passing through $\Pi$ that
are $(\alpha, \beta)$-tangent to $D$, counted with multiplicity
$\mult_{\alpha,\beta}(C;y)$.
\end{enumerate}
\end{definition}

Both $N^{\Delta,\delta}(\alpha, \beta)(y)$ and $N_0^{\Delta,\delta}(\alpha,
\beta)(y)$ in general depend on the tropical boundary divisor $D$. To
simplify notation, we surpress this dependence. We discuss the cases $S =
\PP(1,1,m)$ and $S = \Sigma_m$ in detail later and will always choose $D$ to be a
horizontal line $y = const$, for $const << 0$, cf.\
Figure~\ref{fig:tropicalBoundaryDivisor}.

\begin{theorem}
\label{thm:refinedRelativeSeveriIndep}
The refined relative Severi degree
$N^{\Delta,\delta}(\alpha,\beta)(y)$ and the refined relative irreducible 
Severi degree $N_0^{\Delta,\delta}(\alpha, \beta)(y)$ are independent of the
tropical point configuration if it is generic.
\end{theorem}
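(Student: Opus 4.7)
The strategy is to adapt the wall-crossing argument of Itenberg--Mikhalkin~\cite{IM12} (which yields \thmref{thm:refinedSeveriIndep} in the absolute case) to the relative setting. As a first step, I would reduce the statement about $N^{\Delta,\delta}(\alpha,\beta)(y)$ to the irreducible version $N_0^{\Delta,\delta}(\alpha,\beta)(y)$ via an inclusion--exclusion decomposition analogous to \eqref{eqn:refined_in_terms_of_irreducible_refined}: every $(\alpha,\beta)$-tangent tropical curve decomposes uniquely into irreducible components, and one distributes the tangency data $\alpha$ and $\beta$ among the components in a way that depends only on the combinatorial partition of $\Pi$ (and of $\alpha,\beta$), not on the precise location of the points. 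Hence invariance of $N_0^{\Delta,\delta}(\alpha,\beta)(y)$ propagates to invariance of $N^{\Delta,\delta}(\alpha,\beta)(y)$.

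Next I would view the relevant configuration space as the open set in $D^{|\alpha|} \times (\RR^2)^{n-|\alpha|}$ of tropically generic configurations (with $|\alpha|$ points on $D$ and the rest in the interior), minus a locally finite union of codimension-$1$ walls on which some combinatorial type of tropical curve through $\Pi$ degenerates. Local constancy of the refined count is clear in the complement of these walls; the task reduces to showing, for each wall, that $\sum_C \mult_{\alpha,\beta}(C;y)$ does not change. Since $\mult_{\alpha,\beta}$ differs from $\mult$ by the $\Pi$-independent factor $\prod_i [i]_y^{-\alpha_i}$, this is equivalent to the analogous statement for the unnormalized multiplicity.

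I would then classify the walls. Walls whose local picture is disjoint from $D$ are exactly those handled in~\cite{IM12}, and no new input is required. The new walls are those where (i) a point of $\Pi \cap D$ crosses the foot of an unbounded edge of $C$ parallel to the normal direction of $D$, so that a tangency switches between ``fixed'' and ``free'' type, or (ii) two free tangency edges coalesce along $D$, producing a single unbounded edge of higher weight (and vice versa). In each case, the local identity to verify is a quantum refinement of the Caporaso--Harris wall-crossing underlying \eqref{refrec}: the $y$-weighted contribution before the crossing, which is a product of refined vertex multiplicities and a factor $\prod_i [i]_y^{\alpha_i}$ associated to the fixed tangencies, must equal the contribution after, where the quantum numbers $[k]_y$ exactly account for the conversion between free and fixed tangency data. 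This is why the normalization by $\prod_i [i]_y^{\alpha_i}$ in \eqref{eqn:refinedRelativeCurveMult} is the right one.

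The main obstacle will be the explicit verification of the local quantum identities at these boundary walls, since several unbounded edges on $D$ may interact simultaneously. A more convenient alternative, which I would pursue if the direct enumeration of wall types becomes unwieldy, is to reduce the relative invariance to the absolute one: extend $\Delta$ to a polygon $\widetilde\Delta$ by attaching a long strip along the outer normal of $D$, so that tangency conditions of orders $\alpha$ and $\beta$ along $D$ are encoded as degree conditions along new edges of $\widetilde\Delta$, together with point conditions in the added strip corresponding to the fixed tangencies. Invariance for $\widetilde\Delta$ is then supplied by \thmref{thm:refinedSeveriIndep}, and the relative refined count for $\Delta$ is recovered by dividing by $\prod_i [i]_y^{\alpha_i}$ and by a combinatorial factor accounting for labelings of the free tangency edges; both factors are manifestly $\Pi$-independent.
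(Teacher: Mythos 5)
Your overall strategy coincides with the paper's: reduce the reducible statement to the irreducible one via the decomposition \eqref{eqn:irred_red_rel}, adapt the Itenberg--Mikhalkin deformation argument of \cite{IM12}, and observe that $\mult_{\alpha,\beta}(C;y)=\frac{1}{\prod_i([i]_y)^{\alpha_i}}\mult(C;y)$ differs from the absolute refined multiplicity by a $\Pi$-independent constant, so every wall-crossing identity reduces to the absolute one. Where you diverge from the paper is in your classification of walls, and this is exactly where your argument stalls. The walls of type (i) and (ii) you introduce do not occur for this statement: the data $\alpha$, $\beta$, the weights of the unbounded ends, and the assignment of which ends carry the point conditions of $\Pi\cap D$ are fixed discrete data throughout the deformation. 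A free end sweeping across a point of $\Pi\cap D$ is not a degeneration of any counted curve (the curve stays simple and trivalent, and the solution deforms continuously), and no conversion between fixed and free tangencies happens inside a single count $N^{\Delta,\delta}(\alpha,\beta)(y)$; such conversions belong to the proof of the Caporaso--Harris recursion \eqref{refrec} (Theorem~\ref{thm:refined=tropical}), where a point is genuinely pushed down to $D$, not to the invariance statement. Consequently no ``quantum Caporaso--Harris'' local identities are needed, and the normalization by $\prod_i([i]_y)^{\alpha_i}$ plays no role in invariance. The paper's proof makes precisely this point: with end weights fixed and the $\alpha$-point conditions placed far out on the ends, the only codimension-one events along a generic path are the four-valent-vertex resolutions already treated in \cite[Lemma~3.3]{IM12}, whose local identity involves only vertex multiplicities and hence carries over verbatim. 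Since you explicitly leave the verification of your extra local identities open and name it the main obstacle, your proposal as written is incomplete at its crux, even though the gap disappears once one sees that those walls are spurious.

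Your fallback reduction is also not sound as stated: attaching a long strip to $\Delta$ along $D$ does not encode tangency orders. Curves of the enlarged degree $\widetilde\Delta$ through an augmented point configuration are not in a multiplicity-preserving bijection with $(\alpha,\beta)$-tangent curves of degree $\Delta$ --- the added strip admits extra floors and components, the vertex multiplicities created in the strip do not reproduce the factors $\prod_i([i]_y)^{\pm\alpha_i}$ or $[i]_y^{\beta_i}$, and there is no canonical way to force the prescribed partition of end weights. So \thmref{thm:refinedSeveriIndep} for $\widetilde\Delta$ does not directly yield the relative statement; if you want the direct route, follow the paper and simply run the \cite{IM12} argument with the tangency data frozen.
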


\begin{proof}
The invariance of the refined relative irreducible Severi degree
$N_0^{\Delta,\delta}(\alpha, \beta)(y)$ follows from a rather
straightforward modification of Itenberg
and Mikhalkin's proof~\cite[Theorem~1]{IM12} of the independence of
the refined irreducible Severi degree $N_0^{\Delta,\delta}(y)$. We are
brief here, in order to not repeat a lengthy argument. The
modification with respect to \cite{IM12} is to allow combinatorial
types of tropical curves with arbitrary tangency conditions to one
tropical divisor. The result then follows from the observation that
Itenberg and Mikhalkin's argument also holds in this setting.

Let $\Pi = \{p_1, p_2,\dots, p_n\}$ be a configuration of $n = |\Delta \cap \ZZ^2| -
1 - I(\alpha + \beta) + |\alpha| + |\beta|$ tropical points. It
suffices to show the invariance if we smoothly perturb the points $\Pi$ to
$\Pi(t) = \{p_1, \dots, p_{k-1}, p_k(t), p_{k+1}, \dots, p_{n} \}$,
for some $1 \le k \le n$, and all $t \in [-\eps, \epsilon]$ for
some $\eps >0$ and $\Pi(0) = \Pi$ such that $\Pi(t)$ is tropically
generic for $t \neq 0$.

Fix an irreducible tropical curve $h: C \to \RR^2$ with genus
$g$ with $\Pi \subset h(C)$ that is $(\alpha,
\beta)$-tangent to $D$. Let $S^{\pm}(t)$ be the set of tropical curves $h^{\pm}(t):
C \to \RR^2$ that are $(\alpha, \beta)$-tangent to $D$ with
$\Pi(\eps) \subset h^{\pm}(\eps)(C)$ for $\pm \eps
\in [0, t]$ that deform to $h$, i.e., with $h^{\pm}(0) = h$.

In the following, we conclude that
\begin{equation}
\label{eqn:4vertexResolution}
\sum_{C^+ \in S^{+}(t)} \mult_{\alpha, \beta}(C^+; y) =
 \sum_{C^- \in S^{-}(t)} \mult_{\alpha, \beta}(C^-; y).
\end{equation}
If $h: C \to \RR^2$ has no $4$-valent vertex, then for $t > 0$
small enough, $|S^{+}(t)| = |S^{-}(t)| = 1$ and the combinatorial
types of $C^+$ and $C^-$ agree and
(\ref{eqn:4vertexResolution}) follows. Otherwise, every $4$-valent
vertex of $h$ is perturbed as shown in \cite[Figure~6]{IM12} because for
$t > 0$ small enough the combinatorial type of $h(t)$ changes only
locally around the $4$-valent vertex. (The detailed argument is in the
proof of \cite[Lemma~3.3]{IM12}; their proof also holds if we fix
multiplicity of unbounded edges of $h$ (to incorporate the $\beta$-tangency
conditions) as well as point conditions on these edges very far away
(to incorporate the $\alpha$-tangency conditions).) The refined
relative multiplicity $\mult_{\alpha, \beta}$ on both sides of
(\ref{eqn:4vertexResolution}) equals $\frac{1}{\prod_{i \ge 1}
  ([i]_y)^{\alpha_i}}$ times the refined (non-relative) multiplicity
$\mult(C; y)$. Thus, to show that the difference between both sides of
(\ref{eqn:4vertexResolution}) is zero it suffices to show that
\begin{equation}
\label{eqn:4vertexResolutionNonRelative}
\sum_{C^+ \in S^{+}(t)} \mult(C^+; y) =
 \sum_{C^- \in S^{-}(t)} \mult(C^-; y).
\end{equation}
As the tropical curves on both sides of this equation differ only
locally around the $4$-valent vertices of $h$, the argument to prove
(\ref{eqn:4vertexResolutionNonRelative}) is identical to the proof of
\cite[Lemma~3.3]{IM12}. 
The invariance of the refined relative Severi degree
$N^{\Delta,\delta}(\alpha, \beta)(y)$ then follows from (\ref{eqn:irred_red_rel}).

\end{proof}

\begin{remark}
The refined relative Severi degree $N^{\Delta,\delta}(\alpha,\beta)(y)$ is a
symmetric (under $y \leftrightarrow y^{-1}$) Laurent polynomial in
$y^{1/2}$ with
non-negative integer coefficients (not in $y$ in general). As before, one may ask what the
coefficients of $N^{\Delta,\delta}(\alpha,\beta)(y)$ count. 
\end{remark}

\begin{theorem}\label{thm:refined=tropical}
For all polygons $\Delta$, with $X(\Delta) = \PP(1,1,m)$ or $X(\Delta)
= \Sigma_m$, the refined relative tropical Severi degrees satisfy
\eqref{refrec} with $L =L(\Delta)$. Therefore,
the refined relative Severi degrees defined via the
recursion~\ref{refCHrec} and the refined relative tropical Severi
degrees agree.
\end{theorem}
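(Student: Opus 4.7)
The plan is to show that the tropical refined relative Severi degree $N^{\Delta,\delta}(\alpha,\beta)(y)$ satisfies both the initial conditions and the recursive formula \eqref{refrec} of Definition~\ref{refCHrec}. Since the recursion together with those initial conditions uniquely determines the algebraic invariants (each application strictly decreases $L\cdot H$ or, in the first sum, the total free tangency $|\beta|$, so the procedure terminates), this identification gives the theorem. The approach is a $y$-refinement of the tropical Caporaso--Harris proof of Gathmann--Markwig and Brugall\'e--Mikhalkin.

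\textbf{Initial conditions.} When $\gamma(L,\beta,\delta)=0$, a tropical dimension/balancing count shows that at most one combinatorial type of tropical curve can satisfy the constraints: a single tropical line for $\PP^2$ or $\PP(1,1,m)$, or a collection of $k$ parallel fibers for $\Sigma_m$. In each of the listed base cases the unique curve has refined multiplicity $1$, since each of its vertices is dual to a unit triangle (so $[\Area(\Delta_v)]_y=[1]_y=1$), matching $N^{(S,L),\delta}(\alpha,\beta)(y)=1$. All other combinations with $\gamma=0$ yield no tropical curve, so both sides vanish.

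\textbf{Recursion.} Fix $(\Delta,\alpha,\beta,\delta)$ with $\gamma(L,\beta,\delta)>0$. By Theorem~\ref{thm:refinedRelativeSeveriIndep} we may choose any tropically generic configuration $\Pi$, so we pick $\Pi$ vertically stretched with respect to $\Delta$ (Definition~\ref{def:verticallyStretched}) and slide one distinguished free point $p_0$ arbitrarily far in the direction along the tropical boundary divisor $D$. For $p_0$ sufficiently far, every tropical curve $C$ of degree $\Delta$ with $\delta$ nodes and tangency profile $(\alpha,\beta)$ through $\Pi$ exhibits exactly one of two behaviors. In \emph{Type~A}, one unbounded end of $C$ perpendicular to $D$ of weight $k$ is stretched through $p_0$; the same curve is then counted in $N^{\Delta,\delta}(\alpha+e_k,\beta-e_k)(y)$, and \eqref{eqn:refinedRelativeCurveMult} gives $\mult_{\alpha,\beta}(C;y)=[k]_y\cdot\mult_{\alpha+e_k,\beta-e_k}(C;y)$, producing the first line of \eqref{refrec}. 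In \emph{Type~B}, a ``floor'' $C''$ of class $H$ breaks off from $C$, leaving a residual tropical curve $C'$ of class $\Delta-H$ tangent to $D$ with a new profile $(\alpha',\beta')\supseteq(\alpha,\beta)$ satisfying \eqref{relcong}; the combinatorial choices of how the original $\alpha$- and $\beta$-tangencies distribute among $C'$ and $C''$ produce the binomial $\binom{\alpha}{\alpha'}\binom{\beta'}{\beta}$, and the $\sum_i(\beta_i'-\beta_i)$ new $3$-valent vertices at which the elevators of weight $i$ now terminate freely have dual triangles of area $i$, each contributing $[i]_y$ to the refined multiplicity and giving the factor $\prod_i [i]_y^{\beta_i'-\beta_i}$.

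\textbf{Main obstacle.} The difficult step is Type~B: one must verify that every limiting degeneration as $p_0\to\infty$ really is of this form, that $C''$ is always an irreducible tropical curve of class $H$ (a section of $\Sigma_m$ or a generator of $\PP(1,1,m)$), and that no other combinatorial types arise. This reduces to a local analysis of the floor/template decomposition of Section~\ref{sec:FD} and its behavior under the specialization of $p_0$, exactly as in \cite[Section~8]{BM2} and the argument behind \cite{GM052}. The $y$-refinement then goes through essentially by inspection: the refined multiplicity $\mult(C;y)=\prod_v [\Area(\Delta_v)]_y$ is multiplicative over the split $C=C'\cup C''$, with the only new contributions being precisely the quantum numbers $[i]_y$ at the new attachment vertices. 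Thus the tropical recursion matches \eqref{refrec} term by term.
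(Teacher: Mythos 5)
Your proposal is correct in outline and follows essentially the same route as the paper: push one point of the configuration far toward the divisor $D$ (the paper keeps all points in a thin vertical strip and sends $p_1$ far down), split the curves through the degenerate configuration into your Type~A (a vertical end of weight $k$ through the moved point, giving the $[k]_y$-terms) and Type~B (a floor of class $H$ breaking off, giving the second sum of \eqref{refrec}), and conclude because the recursion with its initial conditions determines the invariants; the structural step you flag as the main obstacle is dealt with in the paper exactly as in your sketch, by quoting the Gathmann--Markwig argument resting on Mikhalkin's Lemma~4.20. Two bookkeeping points need tightening: the distinguished point must be moved \emph{toward} $D$ (far below the other points, perpendicular to it), not ``along'' it; and in Type~B the absolute multiplicity gains a factor $[i]_y$ not only for each of the $\beta_i'-\beta_i$ ends of $C'$ terminating on the floor but also for each of the $\alpha_i-\alpha_i'$ fixed ends emanating from the floor, and only after cancelling the latter against the change of normalization $\prod_i[i]_y^{-\alpha_i}$ versus $\prod_i[i]_y^{-\alpha_i'}$ does one obtain exactly $\prod_i[i]_y^{\beta_i'-\beta_i}$ for the relative multiplicities -- moreover the constraints \eqref{relcong}, which you assert, must actually be verified (the tangency condition via the lattice length of the bottom edge of $\Delta'$, and the cogenus relation $\delta-\delta'=I\alpha'+I\beta'-|\beta'-\beta|$ via counting parallelograms in the bottom width-one strip of the dual subdivision), as the paper does surface by surface.
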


\begin{figure}[htbp]
 \begin{tabular}{cc}
\includegraphics[scale=0.30]{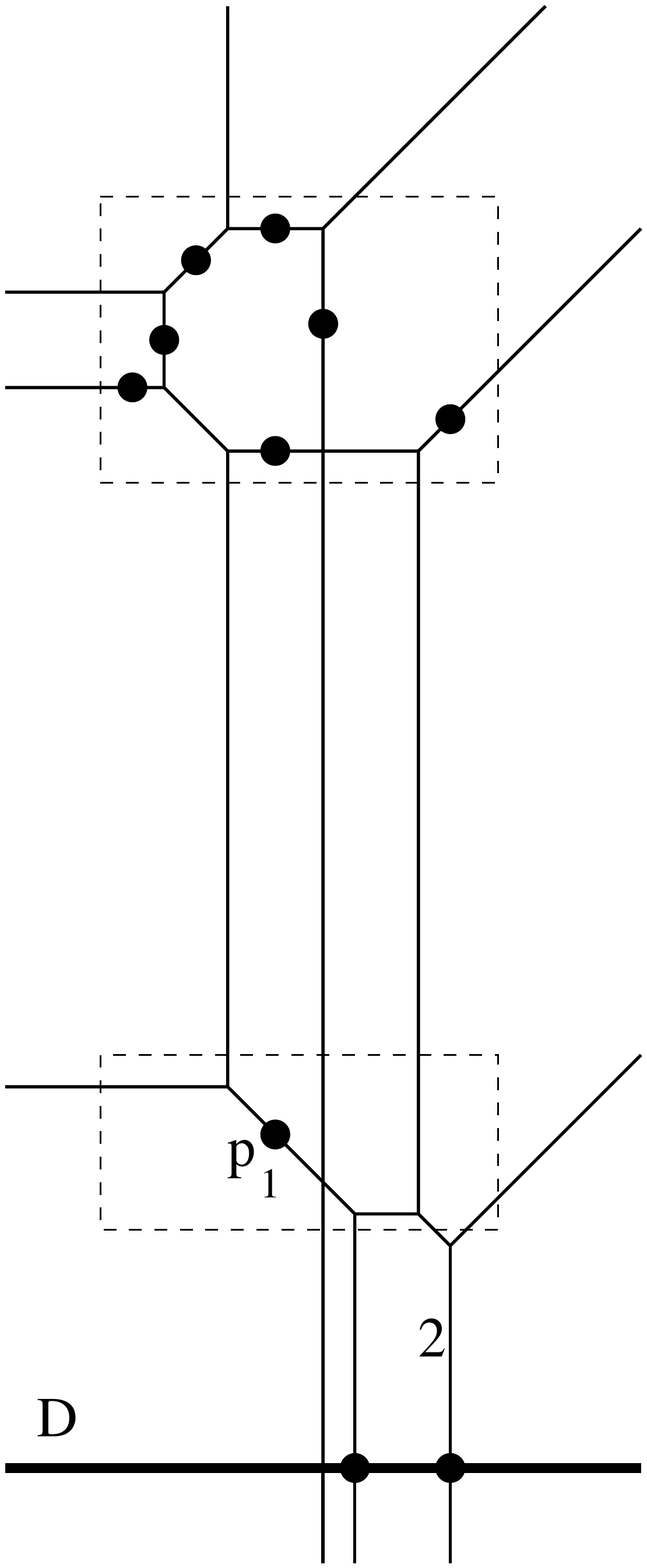} \quad \quad
&
\raisebox{40pt}{
\includegraphics[scale=0.17]{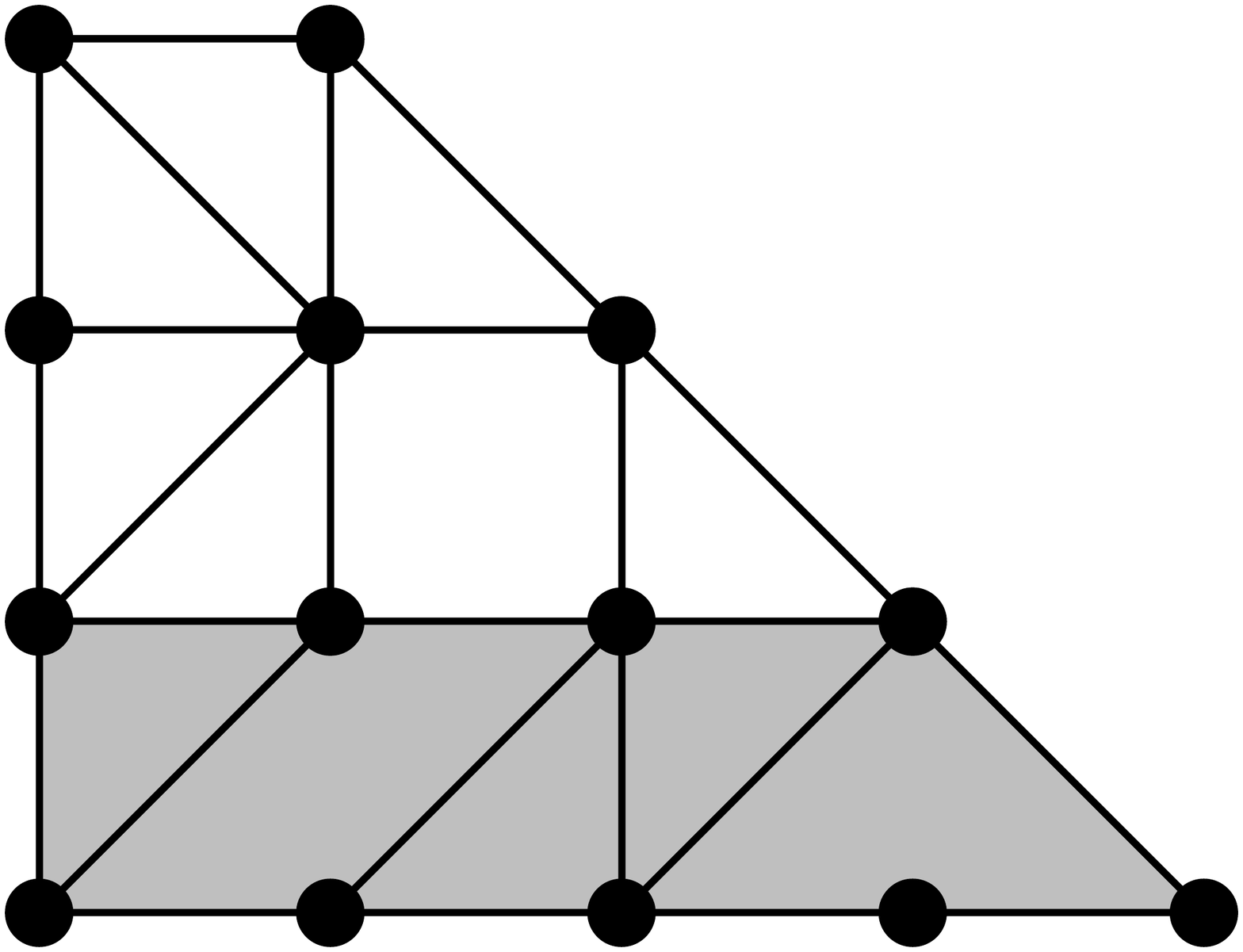}} \\
 \end{tabular}
\caption{A $((1,1),(1))$-tangent curve $C$ to a tropical divisor $D$. All
  point conditions are in a small vertical strip, $p_1$ is far from
  all other points. The curve $C$ decomposes into the ``upper''
  part $C'$ and the ``lower'' part containing $p_1$. $C'$ is
  $((0), (3))$-tangent to $D$. $C$ and $C'$ have $\delta =
  2$ and $\delta' = 1$ as can be see from the dual subdivision on the
  right. The shaded part of the polygon is the difference between the
  degree $\Delta$ of $C$ and $\Delta'$ of $C'$.}
\label{fig:TropicalCurveDecomposition}
\end{figure}

\begin{proof}
Our proof follows closely and extends the argument of Gathmann and
Markwig's proof of their \cite[Theorem 4.3]{GM052}, where they proved
this result in the non-refined case (i.e., $y =1$) for the surface $S =
\PP^2$. Instead of points in a horizontal strip, we consider points in
a vertical strip. The Gathmann-Markwig proof rests on an observation of Mikhalkin
\cite[Lemma~4.20]{Mi05} that holds for any toric surface. We use it in
generalizing their argument.

Fix a small $\eps > 0$ and a large real number $M$. Consider a
tropical generic point configuration $\Pi = \{ p_1, p_2, \dots, p_n\}$
such that
\begin{enumerate}
\item the $x$-coordinates of all $p_i$ (including those on the divisor
  $D$) are within the interval $(-\eps, \eps)$,
\item the point $p_1$ is not on the divisor $D$ but its $y$-coordinate
  is less than $-M$,
\item all points $p_i \neq p_1$ not lying on $D$ have $y$-coordinate
  in the interval $(-\eps, \eps)$.
\end{enumerate}

Let $C$ be a tropical curve of degree $\Delta$ with $\delta$
nodes. Then $C$ is of the following form:
\begin{enumerate}
\item all vertices of $C$ have $x$-coordinate in $(-\eps, \eps)$,
\item there are constants $a$ and $b$, depending only on $\Delta$, with
  $-N < a < b < -\eps$ so that $C$ has no vertices in the strip
  $\RR \times [a, b]$; all edges in this strip are vertical.
\end{enumerate}
See Figure~\ref{fig:TropicalCurveDecomposition} for an illustration.
This follows directly from the verbatim argument
in~\cite{GM052}; note that their argument rests on
\cite[Lemma~4.20]{Mi05} which applies to arbitrary $\Delta$, so in
particular to $S = \PP(1,1,m)$ and $S = \Sigma_m$.

There are two cases:
\begin{enumerate}
\item Case 1: $p_1$ lies on a vertical edge with weight $k \ge 1$. Then all edges of
  $C$ with $y$-coordinates $\le -\eps$ are vertical 
  by the
  Gathmann-Markwig argument. We can move $p_1$ down onto the divisor
  $D$ and obtain a tropical curve with one more ``fixed'' tangency
  conditions. The weight of $C$ is $[k]_y$ times the weight of
  this new curve. The total contribution of tropical curves through
  $\Pi$ with $p_1$ on a vertical edge is thus
\[
\sum_{k:\beta_k>0} [k]_y \cdot  N^{\Delta,\delta}(\alpha+e_k,\beta-e_k)(y).
\]
\item Case 2: $p_1$ does not lie on a vertical half-ray. Then $C$
  can be broken into two pieces: let $C'$ be the curve with bounded
  edges in the vicinity of the points $p_2, \dots, p_n$ that do not
  lie on $D$. The other piece, containing $p_1$, consists of the
  bounded edges of $C$ in the vicinity of $p_1$, one unbounded
  edge in direction $(-1,0)$ and $(1,m)$, respectively, and some
  vertical edges. See Figure~\ref{fig:TropicalCurveDecomposition}
  for an illustration of this decomposition. By construction, the degree $\Delta'$ of $C'$ is the
  lattice polygon obtained from $\Delta$ by removing a horizontal
  strip of width one at the bottom of $\Delta$. 

Next, we determine in how many ways $C'$ can be extended to a
tropical curve of degree $\Delta$ that is $(\alpha, \beta)$-tangent to
the divisor $D$ and passes through $\Pi$. We know that $C'$ is
$(\alpha', \beta')$ tangent to $D$, for some $\alpha' \le \alpha$ and
$\beta' \ge \beta$. There are $\binom{\alpha}{\alpha'}$ ways to choose
which vertical edges of $C$ through a point in $\Pi \cap D$
belong to $C'$. Similarly, there are are $\binom{\beta'}{\beta}$
ways to choose which vertical edges of $C'$ intersecting $D$ but
not containing a point in $\Pi$ belong to $C$ (for more details
see~\cite{GM052}).

To show that the tangency conditions $\alpha'$ and $\beta'$ satisfy $I
\alpha' + I \beta' = H(L-H)$,
recall that degree $\Delta'$ of
$C'$ is the polygon obtained from $\Delta$ by removing from $\Delta$ the bottom
strip of lattice width $1$. Furthermore, $I\alpha' + I \beta'$ equals the lattice
length of the bottom edge of $\Delta'$.
We argue for each surface separately. 
\begin{enumerate}[(a)]
\item $S = \PP^2$: Here $H$ is the class of a line and $L$ is the
  class of a degree $d$ curve. Thus, we have $H(L-H) = d -1$, the
  length of the bottom edge of $\Delta'$.
\item $S = \Sigma_m$: In this case, we defined $H$ as the class of a
  section with $H^2 = m$. Then $H(L-H) = c + (d-1)m$. Recall that $\Delta = \conv((0,0), (0,d), (c, d), (c
  + dm,0))$. The bottom edge of $\Delta'$ has lattice length $c +
  (d-1)m = H(L-H)$. 
\item $S = \PP(1,1,m)$: Here $H$ is the class of a line, and we
  have $H(L-H) = (d-1)m$. As $\Delta =
  \conv((0,0), (0,d), (dm, 0))$, $H(L-H)$ is precisely the lattice
  length of $\Delta'$.
\end{enumerate}

Next, we relate the the $y$-multiplicities of $C$ and
$C'$. We have
\[
\mult(C; y) = \prod\limits_{i\ge 1} \left( [i]_y \right)^{\alpha_i
  - \alpha'_i + \beta'_i - \beta_i} \mult(C'; y)
\]
and, therefore,
\[
\mult_{\alpha, \beta}(C; y) =
 \frac{1}{\prod_{i \ge 1} ([i]_y)^{\alpha_i}} \mult(C; y) =
\prod\limits_{i \ge 1} ([i]_y)^{\beta'_i- \beta_i} \mult_{\alpha',
  \beta'}(C'; y).
\]

Now, we show that the cogenus $\delta'$ of $C'$ satisfies
\[
\delta - \delta' = I \alpha' + I \beta' - |\beta' - \beta|.
\]
By definition, $\delta - \delta'$ counts the number of parallelograms
in the horizontal bottom strip of width $1$ in the dual subdivision
$\Delta_C$. This number equals the number of unbounded edges of
$C'$ that intersect $D$ and are
unbounded in $C$. But this number is precisely the length of the
upper edge of the width $1$ strip minus the number of edges of $C'$,
that
become bounded as edges in $C$, and thus equals $I \alpha' + I
\beta' - |\beta' - \beta|$.

The recursive formula now follows: by the balancing condition, the
$(\alpha', \beta')$-tangent curve $C'$ can be completed to a
$(\alpha, \beta)$-tangent curve $C$ with $p_1 \in C
\backslash C'$ in a unique way, once we choose which vertical
edges of $C$ through a point in $\Pi \cap D$ belong to $C'$
and which vertical edges of $C'$ intersecting $D$ but not
containing a point in $C$ belong to $C$
(giving $\binom{\alpha}{\alpha'} \cdot \binom{\beta'}{\beta}$ choices).
\end{enumerate}
Checking the initial conditions is trivial.
\end{proof}

\begin{remark}
Note that in the case of rational ruled surfaces $\Sigma_m$, the  above proof works also if we allow $m$ to be negative. Then $\Sigma_m=\Sigma_{-m}$, but with the role
of $E$ and $H$ exchanged (this corresponds to exchanging the top and the bottom edge of $\Delta$). Expressed on  $\Sigma_m$, the proof thus also shows  the recursion \eqref{refrec}, with the same initial conditions, but everywhere  with $H$ replaced by $E$ and $\alpha$, $\beta$ specifying contacts along $E$ instead along $H$.
\end{remark}

\subsection{Refined Relative Node Polynomials for Plane Curves}

We now extend the floor diagram technique to refined relative Severi
degree for $S = \PP^2$. Then we show a 
polynomiality result (Theorem~\ref{thm:refinedRelativePolynomialP2})
about refined relative Severi degrees of $\PP^2$, refining the result
of~\cite[Theorem~1.1]{FB10}. We expect a similar, more technical
argument to work also for $S=\Sigma_m$ and $S=\PP(1,1,m)$, but restrict ourselves to
$\PP^2$ for simplicity. The following definitions are a quite straightforward
refinement of \cite[Section~3.2]{FM}.

As we are only concerned with $S = \PP^2$, we denote by $\FD(d,
\delta)$ the set of $\Delta$-floor diagrams with $\Delta =
\conv((0,0), (0,d), (d,0))$ and cogenus $\delta$, for any $d \ge
1$. Also, let $\FD_{\text{conn}}(d, \delta)$ denote the
collection of connected such floor diagrams.
Let $\alpha$ and $\beta$ be two
sequences of non-negative integers with only finitely many non-zero entries.

To each floor diagram $\D \in
\FD(d, \delta)$, there is a statistic
$\nu_{\alpha, \beta}(\D)$, counting the number of ``$(\alpha,
\beta)$-markings'' of $\D$ as in the non-relative case. The precise definition is given in
\cite[Definition~2.3]{FB10}, a reformulation of
\cite[Definition~3.13]{FM}.
Intuitively, $\nu_{\alpha, \beta}(\D)$ counts the number of tropical
curves of degree $\Delta$ that
\begin{enumerate}
\item are $(\alpha,\beta)$-tangent to $D = \{ y = const\}$, where
  $const << 0$ (so $D$ is a very far down horizontal line),
\item ``correspond'' to the floor diagram $\D$ (in the sense of
  \cite[Theorem~3.17]{FM}), and
\item that pass through a vertically stretched point configuration.
\end{enumerate}

The refined relative Severi degree of $\PP^2$ can be expressed purely
combinatorially in terms of the $y$-weighted floor diagrams of
Section~\ref{sec:FD}:
To simplify the formula,
define (cf., \cite[(3.6)]{FM}) for the unrefined setting)
\begin{displaymath}
\label{eqn:refined_relative_multiplicity_FD}
\mult_\beta(\D, y) = \prod_{i \ge 1} ([i]_y)^{\beta_i} \cdot \mult(\D, y).
\end{displaymath}

\begin{proposition}
\begin{enumerate}
\item
\label{itm:RefRelSevP2}
For any $d \ge 1$ and $\delta \ge 1$, the refined relative Severi
degree of $\PP^2$ is given by
\[
N^{d, \delta}(\alpha, \beta)(y) = \sum_{\D \in \FD(d, \delta)} 
\mult_\beta(\D, y) \cdot \nu_{\alpha, \beta}(\D).
\]
\item
\label{itm:RefRelIredSevP2}
For any $d \ge 1$ and $\delta \ge 1$, the refined irreducible relative
Severi degree of $\PP^2$ is computed by
\[
N^{d, \delta}_0(\alpha, \beta)(y) =  \sum_{\D \in \FD_{\text{conn}}(d, \delta)} 
\mult_\beta(\D, y) \cdot \nu_{\alpha, \beta}(\D).
\]
\end{enumerate}
\end{proposition}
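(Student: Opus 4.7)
The plan is to imitate the proof of Theorem~\ref{thm:combRefinedEqualRefined}, but now in the relative setting. Fix $\Pi\subset\RR^2$ a vertically stretched configuration of $|\Delta\cap\ZZ^2|-1-\delta-I(\alpha+\beta)+|\alpha|+|\beta|$ points, with exactly $|\alpha|$ of them placed on the tropical divisor $D=\{y=\text{const}\ll 0\}$ at the prescribed positions dictated by $\alpha$. Brugall\'e--Mikhalkin's bijection (\cite[Proposition~5.9]{BM2}, as used in Theorem~\ref{thm:combRefinedEqualRefined}), together with its extension to tangency conditions along one horizontal divisor given in Fomin--Mikhalkin~\cite[Theorem~3.17]{FM} and reformulated in \cite[Definition~2.3]{FB10}, provides a bijection between the set of simple $\delta$-nodal tropical curves of degree $\Delta$ through $\Pi$ that are $(\alpha,\beta)$-tangent to $D$, and the set of $(\alpha,\beta)$-markings of floor diagrams $\D\in\FD(d,\delta)$. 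This reduces part (1) to showing that the $y$-weights match summand-by-summand, and part (2) follows at once from the same bijection, because connectedness of the underlying graph corresponds to irreducibility of the tropical curve.

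The main step is therefore the identification of the refined multiplicity on the two sides. Given a tropical curve $C$ corresponding to a marking of $\D$, the trivalent vertices of $C$ fall into three groups: (i) the two ``floor'' endpoints of each edge $e$ of $\D$, each of which is dual to a triangle of lattice area $\wt(e)$; (ii) the top vertex of each unbounded vertical edge of weight $i$ that accounts for a $\beta$-tangency, dual to a triangle of area $i$; and (iii) the top vertex of each unbounded vertical edge of weight $i$ carrying an $\alpha$-tangency, again dual to a triangle of area $i$. (All remaining vertices of $C$ are dual to triangles of area $1$ and contribute a factor $[1]_y=1$.) Multiplying over the vertices,
\[
\mult(C;y)
\;=\;\Bigl(\prod_{e\in\D}[\wt(e)]_y^{\,2}\Bigr)\cdot\prod_{i\ge 1}[i]_y^{\beta_i}\cdot\prod_{i\ge 1}[i]_y^{\alpha_i}
\;=\;\mult(\D;y)\cdot\prod_{i\ge 1}[i]_y^{\alpha_i+\beta_i}.
\]
Dividing by $\prod_i[i]_y^{\alpha_i}$ as in \eqref{eqn:refinedRelativeCurveMult} gives exactly $\mult_{\alpha,\beta}(C;y)=\mult_\beta(\D;y)$.

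Summing over all tropical curves through $\Pi$, grouped by the underlying floor diagram $\D\in\FD(d,\delta)$, yields
\[
N^{d,\delta}(\alpha,\beta)(y)\;=\;\sum_{\D\in\FD(d,\delta)}\mult_\beta(\D;y)\cdot\nu_{\alpha,\beta}(\D),
\]
which is (\ref{itm:RefRelSevP2}). Restricting the bijection to irreducible curves on the one side and connected floor diagrams on the other establishes (\ref{itm:RefRelIredSevP2}). Independence of the right-hand side from the choice of vertically stretched $\Pi$ is guaranteed by Theorem~\ref{thm:refinedRelativeSeveriIndep}.

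The main obstacle is step (iii) above, namely making sure that the $\alpha$-tangency vertices really do contribute an honest factor $[i]_y$ to $\mult(C;y)$ even though a point of $\Pi$ sits on the edge rather than at the vertex; this is precisely why $\mult_{\alpha,\beta}$ is defined by dividing out $\prod_i[i]_y^{\alpha_i}$, and so the normalization chosen in \eqref{eqn:refinedRelativeCurveMult} is exactly what is needed for the combinatorial formula to hold.
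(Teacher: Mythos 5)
Your argument is correct in substance and rests on the same mechanism as the paper's proof (the floor-decomposition bijection over a vertically stretched configuration, plus matching of refined weights), but it is organized in the opposite direction. The paper first proves part~(\ref{itm:RefRelIredSevP2}): it invokes \cite[Theorem~3.17]{FM}, which is a statement about \emph{irreducible} tropical curves of fixed genus and \emph{connected} $(\alpha,\beta)$-marked floor diagrams, checks (exactly as you do, though more tersely) that the unrefined weight preservation upgrades to $\tfrac{1}{\prod_i([i]_y)^{\alpha_i}}\mult(C,y)=\prod_i([i]_y)^{\beta_i}\mult(\D,y)$, and then deduces part~(\ref{itm:RefRelSevP2}) from the irreducible case by the inclusion--exclusion procedure of \cite[Section~1]{FM}. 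You instead assert the correspondence directly for possibly reducible curves and possibly disconnected diagrams, prove (\ref{itm:RefRelSevP2}) first, and obtain (\ref{itm:RefRelIredSevP2}) by restriction. This is a legitimate route (it is the relative analogue of how the paper itself handles the non-relative case in Theorem~\ref{thm:combRefinedEqualRefined} via \cite[Proposition~5.9]{BM2}, and the unrefined reducible relative formula is exactly Block's starting point in \cite{FB10}), but be aware that the reference you lean on for the tangency extension, \cite[Theorem~3.17]{FM}, only covers the irreducible/connected situation; to make your direct reducible claim self-contained you would either need the Brugall\'e--Mikhalkin floor decomposition with tangency conditions for reducible curves, or the inclusion--exclusion step the paper uses. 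Your explicit vertex-by-vertex bookkeeping (floor endpoints of each elevator giving $[\wt(e)]_y^2$, each unbounded bottom edge of weight $i$ giving one factor $[i]_y$, whether it carries an $\alpha$- or a $\beta$-condition) is correct for $\PP^2$, makes explicit what the paper only records as the identity above, and is a welcome addition; it also silently uses that every Minkowski summand of the $d$-triangle is again a triangle, so no degenerate vertical-line components occur, which is fine here since the proposition concerns only $\PP^2$.
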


\begin{proof}
We first prove Part~\ref{itm:RefRelIredSevP2}. We may assume, by
Theorem~\ref{thm:refinedRelativeSeveriIndep}, that the tropical point
configuration is vertically stretched. By \cite[Theorem~3.17]{FM},
there is a bijection $f$ between irreducible tropical curves of degree $d$ and
cogenus $\delta$ that are $(\alpha,
\beta)$-tangent to $D$ and $(\alpha, \beta)$-marked floor diagrams
$\D$ with $\D \in \FD_{\text{conn}}(d, \delta)$, where we used that
these tropical curves have genus $g = \binom{d-1}{2} - \delta$.
By \cite[Theorem~3.17]{FM} (see also \cite[Theorem 3.7]{FM} for the
non-relative case but with more details), the map $f$ preserves the unrefined
multiplicity ($y = 1$) for any such tropical curve $C$ with
corresponding floor diagram $\D$: 
\[
\mult_{\alpha, \beta}(C,1) = \mult_\beta(\D, 1).
\]
By definition of the refined multiplicities of tropical curves and
floor diagrams (Definitions~\ref{def:refinedMultiplicityTropicalCurve}
and~\ref{def:refined_multiplicity_FD} and Equation
(\ref{eqn:refined_relative_multiplicity_FD})), 
the bijection $f$ preserves also the refined multiplicities:
\[
\mult_{\alpha, \beta}(C,y) = \mult_\beta(\D, y),
\]
as $\frac{1}{\prod_{i \ge 1} ([i]_y)^{\alpha_i}}\mult(C, y) =
\prod_{i \ge 1} ([i]_y)^{\beta_i} \cdot \mult(\D, y)$, 
and Part~\ref{itm:RefRelIredSevP2} follows.

Part~\ref{itm:RefRelSevP2} follows from Part~\ref{itm:RefRelIredSevP2}
by a straightforward refined extension
of the inclusion-exclusion procedure of~\cite[Section~1]{FM} that was
used to conclude~\cite[Corollary~1.9]{FM} (the non-relative unrefined count of
reducible curves via floor diagrams) from~\cite[Theorem~1.6]{FM} (the
non-relative unrefined count of irreducible curves via floor diagrams).
\end{proof}

\begin{theorem}
\label{thm:refinedRelativePolynomialP2}
For any $\delta \ge 1$, there is a polynomial
\[
N_\delta(\alpha; \beta; y) \in \QQ[y^{\pm 1}][\alpha_1, \dots, \alpha_\delta;
\beta_1, \dots, \beta_\delta]
\]
in $\alpha_i$ and $\beta_i$ with coefficients in $\QQ[y^{\pm 1}]$
such that, for any $\alpha$ and $\beta$ with $|\beta| \ge \delta$, we
have
\[
 N^{d, \delta}(\alpha;\beta)(y) = \prod_{i \ge 1} ([i]_y)^{\beta_i}
\frac{(|\beta| - \delta)!}{\beta_1! \beta_2! \cdots}\cdot  N_\delta(\alpha; \beta; y).
\]
The coefficients of the polynomial $N_\delta(\alpha; \beta; y)$ are
preserved under the transformation $y \leftrightarrow y^{-1}$.
\end{theorem}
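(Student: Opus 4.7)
The plan is to combine the combinatorial formula from Part~(\ref{itm:RefRelSevP2}) of the preceding proposition with a $y$-refined version of Block's polynomiality argument \cite[Theorem~1.1]{FB10}. The key observation is that the $y$-dependence enters only through the edge weight factors $\mult(\D, y)$, which are independent of $\alpha$ and $\beta$; so once the $\prod_{i\ge 1}([i]_y)^{\beta_i}$ prefactor has been extracted from $\mult_\beta(\D, y)$, the polynomial structure in the tangency data can be inherited directly from the unrefined case.

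First I would rewrite, using the combinatorial expression,
\[
N^{d,\delta}(\alpha,\beta)(y) = \prod_{i\ge 1} ([i]_y)^{\beta_i} \sum_{\D \in \FD(d, \delta)} \mult(\D, y) \cdot \nu_{\alpha, \beta}(\D).
\]
Next I would invoke (a mild strengthening of) the polynomiality of $\nu_{\alpha,\beta}(\D)$ in $\alpha,\beta$ from \cite[Proposition~3.3]{FB10}: in the template-plus-positions decomposition of an $(\alpha,\beta)$-marking, the choices factor as a multinomial factor $(|\beta|-\delta)!/\prod_i \beta_i!$ (the number of ways to assign ``free'' tangency points to the $\beta$-unbounded edges of $\D$) times a polynomial in $(\alpha,\beta)$ counting the remaining local choices within each template. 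Hence
\[
\nu_{\alpha, \beta}(\D) = \frac{(|\beta|-\delta)!}{\beta_1!\,\beta_2!\cdots} \cdot Q_{\D}(\alpha, \beta)
\]
for a polynomial $Q_\D(\alpha, \beta) \in \QQ[\alpha_1, \dots, \alpha_\delta, \beta_1, \dots, \beta_\delta]$, so that
\[
N_\delta(\alpha;\beta;y) := \sum_{\D \in \FD(d, \delta)} \mult(\D, y) \, Q_{\D}(\alpha, \beta)
\]
has coefficients in $\QQ[y^{\pm 1}]$. Because for fixed $\delta$ only finitely many floor diagrams (classified by template collections of total cogenus $\delta$) contribute nontrivially, and each $\mult(\D, y)$ is a fixed Laurent polynomial, the sum is well defined and polynomial in $\alpha, \beta$; moreover the result is independent of $d$ once $d \ge \delta$, which is what justifies referring to a single universal polynomial $N_\delta$.

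The symmetry of the coefficients under $y \leftrightarrow y^{-1}$ is then immediate: each factor $\mult(\D, y) = \prod_e ([\wt(e)]_y)^2$ is already invariant under $y \leftrightarrow y^{-1}$, so $N_\delta(\alpha; \beta; y)$ inherits this symmetry term by term. The main technical obstacle is verifying the factorization of $\nu_{\alpha,\beta}(\D)$ with the precise multinomial coefficient $(|\beta|-\delta)!/\prod_i\beta_i!$ and the correct bounds $\alpha_i, \beta_i$ with $i \le \delta$; but this step is purely combinatorial (no $y$-weights appear), so it is exactly the content of \cite[Theorem~1.1, Proposition~3.3]{FB10} and carries over verbatim to the refined setting.
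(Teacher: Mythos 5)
Your overall route is the same as the paper's: the paper proves this theorem in one line, by observing that the argument of \cite[Theorem~1.1]{FB10} applies verbatim once the unrefined floor-diagram multiplicity is replaced by $\mult(\D;y)$, since the $y$-refinement enters only through the edge-weight factors and never through the marking counts $\nu_{\alpha,\beta}(\D)$. Your final paragraph (the $y\leftrightarrow y^{-1}$ symmetry of $\prod_e([\wt(e)]_y)^2$, and the deferral of the combinatorial core to \cite{FB10}) is exactly that reduction and is fine.

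However, the intermediate elaboration you insert is not correct as stated, and if it were meant as the actual argument rather than a gloss on \cite{FB10} it would be a gap. First, the set $\FD(d,\delta)$ is \emph{not} finite independently of $d$: the number of floor diagrams of degree $d$ and cogenus $\delta$ grows with $d$ (only the set of \emph{templates} of cogenus $\le\delta$ is finite), so your displayed definition $N_\delta(\alpha;\beta;y):=\sum_{\D\in\FD(d,\delta)}\mult(\D,y)\,Q_\D(\alpha,\beta)$ is a $d$-dependent expression, and the assertion that it is ``independent of $d$ once $d\ge\delta$'' is precisely the content of the theorem, not something that follows from finiteness. The actual mechanism, as in \cite{FM}, \cite{AB10} and \cite{FB10}, is to decompose each diagram into (relative) templates, record their positions as lattice points of a polytope whose combinatorial type stabilizes, and obtain polynomiality in $\alpha,\beta$ (equivalently in $d=I\alpha+I\beta$) by discretely integrating a position polynomial; no per-diagram statement suffices. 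Second, and relatedly, the claimed factorization $\nu_{\alpha,\beta}(\D)=\frac{(|\beta|-\delta)!}{\beta_1!\beta_2!\cdots}\,Q_\D(\alpha,\beta)$ with $Q_\D$ polynomial is not what \cite{FB10} proves and is not even a meaningful polynomiality statement for a fixed $\D$, since fixing $\D$ fixes $d$ and hence restricts $(\alpha,\beta)$ to a finite set; the multinomial prefactor is extracted only after summing over all diagrams of all degrees via the template decomposition. Since the hard combinatorial step is in the end cited rather than reproved, your proposal lands where the paper does, but the sketch of the internals of \cite{FB10} should either be corrected along the above lines or omitted.
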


We call $N_\delta(\alpha; \beta;y)$ the \emph{refined relative node
  polynomial} of $\PP^2$.

\begin{proof}
The proof is identical to the non-refined argument in~\cite[Theorem~1.1]{FB10}
but with the unrefined multiplicity of a floor diagram replaced by the
refined multiplicity of the present paper.
\end{proof}

\bibliographystyle{amsplain}
\bibliography{References_Florian}

\end{document}